\font \smallrm=cmr10 at 9truept
\font \smallbf=cmbx10 at 9truept
\font \smallsl=cmsl10 at 9truept
\newtheorem{theorem}{Theorem}[section]
\newtheorem{proposition}[theorem]{Proposition}
\newtheorem{definition}[theorem]{Definition}
\newtheorem{corollary}[theorem]{Corollary}
\newtheorem{lemma}[theorem]{Lemma}
\newtheorem{free text}[theorem]{}
\theoremstyle{definition}
\newtheorem{remark}[theorem]{Remark}
\newcommand \gerg {\mathfrak g}
\newcommand \gerh {\mathfrak h}
\newcommand \gerk {\mathfrak k}
\newcommand \gerl {\mathfrak l}
\newcommand \gert {\mathfrak t}
\newcommand \gerf {\mathfrak f}
\newcommand \gerI {{\mathfrak I}}
\newcommand \gerC {{\mathfrak C}}
\newcommand \calI {{\mathcal I}}
\newcommand \calC {{\mathcal C}}
\newcommand \calP {{\mathcal P}}
\newcommand \calH{{\mathcal H}}
\newcommand \gersl {\mathfrak{sl}}
\newcommand \HA{{\mathcal {HA}}}
\newcommand \U {\mathbb U}
\newcommand \F {\mathbb F}
\newcommand \N {\mathbb N}
\newcommand \C {\mathbb C}
\newcommand \Cq {\C (q)}
\newcommand \Cqqm {{\C\big[q, q^{-1}\big]}}
\newcommand \id {\text{\rm id}}
\newcommand \Ker {\text{\sl Ker}}
 \newcommand \ideal{\trianglelefteq}
 \newcommand\coideal {\dot{\trianglelefteq}}
\newcommand\Hh{\stackrel{\circ}{\text{H}}}
\newcommand\Kk{\stackrel{\circ}{\text{K}}}
\newcommand\gerho{\stackrel{\circ}{\gerh}}
 \numberwithin{equation}{section}
\begin{document}

{\ }

\vskip-33pt

 \centerline{\smallrm Documenta Mathematica  {\smallbf 19}  (2014), 333--380.   
 }
%
 \vskip1pt
   \centerline{\smallrm {\smallsl The original publication is available at\/}
\  http://www.math.uni-bielefeld.de/documenta/vol-19/vol-19.html}
 \phantom{\centerline{preprint  {\sl arXiv:1210.1597 [math.QA]\/}  (2012)}}

 \vskip45pt   {\ }

\centerline{\Large \bf A GLOBAL QUANTUM DUALITY}
 \vskip9pt
\centerline{\Large \bf PRINCIPLE FOR SUBGROUPS}
 \vskip9pt
\centerline{\Large \bf AND HOMOGENEOUS SPACES}

\vskip39pt

\centerline{ Nicola CICCOLI${}^\flat$, Fabio GAVARINI${}^\#$ }

\vskip9pt

\centerline{\it ${}^\flat$ Dipartimento di Matematica e Informatica, Universit\`a di Perugia }
\centerline{\it via Vanvitelli 1  --- I-06123 Perugia, Italy}
\centerline{{\footnotesize e-mail: ciccoli@dipmat.unipg.it}}

\vskip5pt

\centerline{\it ${}^\#$ Dipartimento di Matematica, Universit\`a di Roma ``Tor Vergata'' }
\centerline{\it via della ricerca scientifica 1  --- I-00133 Roma, Italy}
\centerline{{\footnotesize e-mail: gavarini@mat.uniroma2.it}}

\vskip57pt

\begin{abstract}
 {\footnotesize
   For a complex or real algebraic group  $ G $,  with  $ \, \gerg := \mathrm{Lie}(G) \, $,  quantizations of {\sl global\/}  type are suitable Hopf algebras  $ F_q[G] $  or  $ U_q(\gerg) $  over  $ \Cqqm \, $.  Any such quantization yields a structure of Poisson group on  $ G $,  and one of Lie bialgebra on  $ \gerg \, $:  correspondingly, one has dual Poisson groups  $ G^* $  and a dual Lie bialgebra  $ \gerg^* \, $.  In this context, we introduce suitable notions of  {\sl quantum subgroup\/}  and, correspondingly, of  {\sl quantum homogeneous space}, in three versions:  {\sl weak},  {\sl proper\/}  and  {\sl strict\/}  (also  called  {\sl flat\/}  in the literature).  The last two notions only apply to those subgroups which are coisotropic, and those homogeneous spaces which are Poisson quotients; the first one instead has no restrictions whatsoever.
                                                                \par
   The global quantum duality principle (GQDP), as developed in  \cite{Ga3},  associates with any global quantization of $ G \, $,  or of  $ \gerg \, $,  a global quantization of  $ \gerg^* $,  or of  $ G^* $.  In this paper we present a similar GQDP for quantum subgroups or quantum homogeneous spaces.  Roughly speaking, this associates with every quantum subgroup, resp.~quantum homogeneous space, of  $ G \, $,  a quantum homogeneous space, resp.~a quantum subgroup, of  $ G^* \, $.  The construction is tailored after four parallel paths --- according to the different ways one has to algebraically describe a subgroup or a homogeneous space ---   and is ``functorial'', in a natural sense.
                                                                            \par
   Remarkably enough, the output of the constructions are always quantizations of  {\sl proper\/}  type.  More precisely, the output is related to the input as follows: the former is the  {\it coisotropic dual\/}  of the coisotropic interior of the latter   --- a fact that extends the occurrence of Poisson duality in the original GQDP for quantum groups.  Finally, when the input
 is a strict quantization then the output is strict as well   --- so the special r{\^o}le of strict quantizations is respected.
                                                                            \par
   We end the paper with some explicit examples of application of our recipes.
%
\footnote{\ Keywords: \ {\sl Quantum Groups, Poisson Homogeneous Spaces, Coisotropic Subgroups}.
                                                             \par
  \quad \hskip-3pt   2010 {\it Mathematics Subject Classification:} \
Primary 17B37, 20G42, 58B32; Secondary 81R50. }
%
 }
\end{abstract}

%
\vfill
 \eject
%
%

\section{Introduction}

   In this paper we work with quantizations of (algebraic) complex and real groups, their subgroups and homogeneous spaces, and a special symmetry among such quantum objects which we refer to as the ``Global Quantum Duality Principle''.  This is just a last step in a process, which is worth recalling in short.

\vskip5pt

   In any possible sense, quantum groups are suitable deformations of some algebraic objects attached with algebraic groups, or Lie groups.  Once and for all, we adopt the point of view of algebraic groups: nevertheless, all our analysis and results can be easily converted in the language of Lie groups.

\vskip5pt

   The first step to deal with is describing an algebraic group  $ G $  via suitable algebraic object(s).  This can be done following two main approaches, a  {\sl global\/}  one or a  {\sl local\/}  one.

\vskip5pt

   In the  {\sl global geometry\/}  approach,  one considers  $ U(\gerg) $   --- the universal enveloping algebra of the tangent Lie algebra  $ \, \gerg := \mathrm{Lie}(G) \, $  ---   and  $ F[G] $   --- the algebra of regular functions on  $ G \, $.  Both these are Hopf algebras, and there exists a non-degenerate pairing among them so that they are dual to each other.  Clearly,  $ U(\gerg) $  only accounts for the local data of  $ G $  encoded in  $ \gerg \, $,  whereas  $ F[G] $  instead totally describes  $ G \, $:  thus  $ F[G] $  yields a global description of  $ G \, $,  which is why we speak of ``global geometry'' approach.
                                                          \par
   In this context, one describes (globally) a subgroup  $ K $  of  $ G $   --- always assumed to be Zariski closed ---   via the ideal in  $ F[G] $  of functions vanishing on it; alternatively, an infinitesimal description is given taking in  $ U(\gerg) $  the subalgebra  $ U(\gerk) \, $,  where  $ \, \gerk := \mathrm{Lie}(K) \, $.
                                                          \par
   For a homogeneous  $ G $--space,  say  $ M $,  one describes it in the form  $ \, M \cong G\big/\!K \, $   --- which amounts to fixing some point in  $ M $  and its stabilizer subgroup  $ K $  in  $ G \, $.  After this, a local description of  $ \, M \cong G\big/\!K \, $  is given by representing its left-invariant differential operators as  $ \, U(\gerg) \big/ U(\gerg)\,\gerk \, $:  therefore, we can select  $ \, U(\gerg)\,\gerk \, $   --- a left ideal, left coideal in  $ U(\gerg) $  ---   as algebraic object to encode  $ \, M \cong G\big/\!K \, $,  at least infinitesimally.  For a global description instead, obstructions might occur.  Indeed, we would like to describe  $ \, M \cong G\big/\!K \, $  via some algebra  $ \, F[M] \cong F\big[G\big/\!K\big] \, $  strictly related with $ F[G] \, $.  This varies after the nature of  $ \, M \cong G\big/\!K \, $   --- hence of  $ K $  ---   and in general might be problematic.  Indeed, there exists a most natural candidate for this job, namely the set  $ \, {F[G]}^K \, $  of  $ K $--invariants  of  $ F[G] \, $,  which is a subalgebra and left coideal.  The problem is that  $ {F[G]}^K $  permits to recover exactly  $ G\big/\!K $  if and only if  $ \, M \cong G\big/\!K \, $  is a quasi-affine variety (which is not always the case).  This yields a genuine obstruction, in the sense that this way of (globally) encoding the space  $ \, M \cong G\big/\!K \, $  only works with quasi-affine  $ G $--spaces;  for the other cases, we just drop this approach   --- however, for a complete treatment of the case of  {\sl projective}  $ G $--spaces  see  \cite{CFG}.

\vskip5pt

   In contrast, the approach of  {\sl formal geometry\/}  is a looser one: one replaces  $ F[G] $  with a topological algebra  $ \, F[[G]] = F\big[\!\big[G_{\!f}\big]\!\big] \, $   --- the algebra of ``regular functions on the formal group  $ G_{\!f} $''  associated with  $ G $  ---   which can be realized either as the suitable completion of the local ring of  $ G $  at its identity or as the (full) linear dual of  $ U(\gerg) \, $.  In any case, both algebraic objects taken into account now only encode the local information of  $ G \, $.
                                                          \par
   In this formal geometry context, the description of (formal) subgroups and (formal) homogeneous spaces goes essentially the same.  However, in this case no problem occurs with (formal) homogeneous space, as any one of them can be described via a suitably defined subalgebra of invariants  $ \, {F\big[\!\big[G_{\!f}\big]\!\big]}^{K_{\!f}} $:  in a sense, ``all formal homogeneous spaces are quasi-affine''.  As a consequence, the overall description one eventually achieves is entirely symmetric.

\vskip5pt

   When dealing with quantizations, Poisson structures arise (as semiclassical limits) on groups and Lie algebras, so that we have to do with Poisson groups and Lie bialgebras.  In turn, there exist distinguished subgroups and homogeneous spaces   --- and their infinitesimal counterparts ---   which are ``well-behaving'' with respect to these extra structures: these are  {\sl coisotropic subgroups\/}  and  {\sl Poisson quotients}.  Moreover, the well-known Poisson duality   --- among Poisson groups  $ G $  and  $ G^* $  and among Lie bialgebras  $ \gerg $  and  $ \gerg^* $  ---   extends to similar dualities among coisotropic subgroups (of  $ G $  and  $ G^* $)  and among Poisson quotients (of  $ G $  and  $ G^* $  again).  It is also useful to notice that each subgroup contains a maximal coisotropic subgroup (its ``coisotropic interior''), and accordingly each homogeneous space has a naturally associated Poisson quotient.
                                                            \par
   As to the algebraic description, all properties concerning Poisson (or Lie bialgebra) structures on groups, Lie algebras, subgroups and homogeneous spaces have unique characterizations in terms of the algebraic codification one adopts for these geometrical objects.  Details change a bit according to whether one deals with global or formal geometry, but everything goes in parallel in either context.

\vskip5pt

   By (complex) ``quantum group'' of  {\sl formal type\/}  we mean any topological Hopf algebra  $ H_\hbar $  over the ring  $ \C[[\hbar]] $  whose semiclassical limit at  $ \, \hbar = 0 \, $   --- i.e.,  $ \, H_\hbar \big/ \hbar\,H_\hbar \, $  ---  is of the form  $ F\big[\!\big[G_{\!f}\big]\!\big] $  or  $ U(\gerg) $  for some formal group  $ G_{\!f} $  or Lie algebra  $ \gerg \, $.  Accordingly, one writes  $ \, H_\hbar := F_\hbar\big[\!\big[G_{\!f}\big]\!\big] \, $  or  $ \, H_\hbar := U_\hbar(\gerg) \, $,  calling the former a QFSHA and the latter a QUEA.  If such a quantization (of either type) exists, the formal group  $ G_{\!f} $  is Poisson and  $ \gerg $  is a Lie bialgebra; accordingly, a dual formal Poisson group  $ G_{\!f}^{\,*} $  and a dual Lie bialgebra  $ \gerg^* $  exist too.
                                                            \par
   In this context, as formal quantizations of subgroups or homogeneous spaces one typically considers suitable subobjects of either  $ F_\hbar\big[\!\big[G_{\!f}\big]\!\big] $  or  $ U_\hbar(\gerg) $  such that:  {\it (1)\/}  with respect to the containing formal Hopf algebra, they have the same relation as a in the ``classical'' setting   --- such as being a one-sided ideal, a subcoalgebra, etc.;  {\it (2)\/}  taking their specialization at  $ \, \hbar = 0 \, $  is the same as restricting to them the specialization of the containing algebra (this is typically mentioned as a ``flatness'' property).  This second requirement has a key consequence, i.e.~the semiclassical limit object is necessarily ``good'' w.r.~to the Poisson structure: namely, if we are quantizing a subgroup, then the latter is necessarily coisotropic, while if we are quantizing a homogeneous space then it is indeed a Poisson quotient.

\vskip5pt

  In the spirit of global geometry, by (complex) ``quantum group'' of  {\sl global type\/}  we mean any Hopf algebra  $ H_q $  over the ring  $ \Cqqm $  whose semiclassical limit at  $ \, q = 1 \, $   --- i.e.,  $ \, H_q \big/ (\,q\!-\!1)\,H_q \, $  ---  is of the form  $ F[G] $  or  $ U(\gerg) $  for some algebraic group  $ G $  or Lie algebra  $ \gerg \, $.  Then one writes  $ \, H_q := F_q[G] \, $  or  $ \, H_q := U_\hbar(\gerg) \, $,  calling the former a QFA and the latter a QUEA.  Again, if such a quantization (of either type) exists the group  $ G $  is Poisson and  $ \gerg $  is a Lie bialgebra, so that dual formal Poisson groups  $ G^* $  and a dual Lie bialgebra  $ \gerg^* $  exist too.
                                                            \par
   As to subgroups and homogeneous spaces, global quantizations can be defined via a sheer reformulation of the same notions in the formal context: we refer to such quantizations as  {\sl strict}.  In this paper, we introduce two more versions of quantizations, namely  {\sl proper\/}  and  {\sl weak\/}  ones, ordered by increasing generality, namely  $ \, \{\text{\sl strict\/}\} \subsetneq \{\text{\sl proper\/}\} \subsetneq \{\text{\sl weak\/}\} \, $.  This is achieved by suitably weakening the condition  {\it (2)\/}  above which characterizes a quantum subgroup or quantum homogeneous space.  Remarkably enough, one finds that now the existence of a  {\sl proper\/}  quantization is already enough to force a subgroup to be coisotropic, or a homogeneous space to be a Poisson quotient.

\vskip5pt

   The  {\sl Quantum Duality Principle\/}  (=QDP) was first developed by Drinfeld  (cf.~\cite{Dr}, \S 7) for formal quantum groups (see  \cite{Ga1}  for details).  It provides two functorial recipes, inverse to each other, acting as follows: one takes as input a QFSHA for  $ G_{\!f} $  and yields as output a QUEA for  $ \gerg^* \, $;  the other one as input a QUEA for  $ \gerg $  and yields as output a QFSHA for  $ G_{\!f}^{\,*} \, $.
                                                                       \par
   The  {\sl Global Quantum Duality Principle\/}  (=GQDP) is a version of the QDP tailored for global quantum groups  (see \cite{Ga2,Ga3}):  now one functorial recipe takes as input a QFA for  $ G $  and yields a QUEA for  $ \gerg^* \, $,  while the other takes a QUEA for  $ \gerg $  and provides a QFA for  $ G^* \, $.

\vskip5pt

   An appropriate version of the QDP for formal subgroups and formal homogeneous spaces was devised in  \cite{CiGa}.  Quite in short, the outcome there was an explicit recipe which taking as input a formal quantum subgroup, or a formal quantum homogeneous space, respectively, of  $ G_{\!f} $  provides as output a quantum formal homogeneous space, or a formal quantum subgroup, respectively, of  $ G_{\!f}^{\,*} \, $.  In short, these recipes come out as direct ``restriction'' (to formal quantum subgroups or formal quantum homogeneous spaces) of those in the QDP for formal quantum groups.  This four-fold construction is fully symmetric, in particular all duality or orthogonality relations possibly holding among different quantum objects are preserved.  Finally, Poisson duality is still involved, in that the semiclassical limit of the output quantum object is always the coisotropic dual of the semiclassical limit of the input quantum object.

\vskip7pt

   The main purpose of the present work is to provide a suitable version of the GQDP for global quantum subgroups and global quantum homogeneous spaces   --- extending the GQDP for global quantum groups ---   as much general as possible.  The inspiring idea, again, is to ``adapt'' (by restriction, in a sense) to these more general quantum objects the functorial recipes available from the GQDP for global quantum groups.  Remarkably enough, this approach is fully successful: indeed, it does work properly not only with  {\sl strict\/}  quantizations (which should sound natural) but also for  {\sl proper\/}  and for  {\sl weak\/}  ones.  Even more, the output objects always are global quantizations (of subgroups or homogeneous spaces) of  {\sl proper\/}  type   --- which gives an independent motivation to introduce the notion of proper quantization.
                                                                       \par
   Also in this setup, Poisson duality, in a generalized sense, shows up again as the link between the input and the output of the GQDP recipes: namely, the semiclassical limit of the output quantum object is always the coisotropic dual of the coisotropic interior of the semiclassical limit of the input quantum object.
                                                                       \par
   Besides the wider generality this GQDP applies to (in particular, involving also non-coisotropic subgroups, or homogeneous spaces which are not Poisson quotients), we pay a drawback in some lack of symmetry for the final result   --- compared to what one has in the formal quantization context.  Nevertheless, such a symmetry is almost entirely recovered if one restricts to dealing with  {\sl strict\/}  quantizations, or to dealing with ``double quantizations''   --- involving simultaneously a QFA and a QUEA in perfect (i.e.\ non-degenerate) pairing.

\vskip9pt

   At the end of the paper  (Section \ref{examples})  we present some applications of our GQDP: this is to show how it effectively works, and in particular that it does provide explicit examples of global quantum subgroups and global quantum homogeneous spaces.  Among these, we also provide an example of a quantization which is  {\sl proper\/}  but is  {\sl not strict}   --- which shows that the former notion is a non-trivial generalization of the latter.

\bigskip

\section{General Theory}

The main purpose of the present section is to collect some classical material about Poisson geometry for groups and homogeneous spaces.  Everything is standard, we just need to fix the main notions and notations we shall deal with.

\medskip

\subsection{Subgroups and homogeneous spaces}  \label{subgrps-homspaces}

Let $G$ be a complex affine algebraic group and let $\mathfrak g$ be its tangent Lie algebra.  Let us denote by $F[G]$ its algebra of regular functions and by $U(\gerg)$  its universal enveloping algebra. Both such algebras are Hopf algebras, and there exists a natural pairing of Hopf algebras between them, given by evaluation of differential operators onto functions. This pairing is perfect if and only if $G$ is connected, which we will always assume in what follows.
                                                     \par
   A \emph{real form} of either $G$ or $\mathfrak g$ is given once a Hopf $*$--algebra structure is fixed on either $F[G]$ or $U(\mathfrak g) $   --- and in case one take such a structure on both sides, the two of them must be dual to each other. Thus by \emph{real algebraic group} we will always mean a complex algebraic group endowed with a suitable $*$--structure.
                                                     \par
   A subgroup $K$ of $G$ will always be considered as Zariski--closed and algebraic. For any such subgroup, the quotient $G\big/K$ is an algebraic left homogeneous $G$--space, which is quasi-projective as an algebraic variety. Given  an algebraic left homogeneous $G$--space $M$ and choosing $m\in M$, the stabilizer subgroup $K_m$ will be a closed algebraic subgroup of $G$ such that  $ \, G\big/K_m \simeq M \, $;  changing point will change the stabilizer within a single conjugacy class.
                                                     \par
   We shall describe the subgroup $K\,$, or the homogeneous space $G\big/K$, through either an algebraic subset of $F[G]$  --- to which we will refer as a \emph{global} coding ---   or an algebraic subset of $U(\gerg)$   --- to which we will refer as a \emph{local} coding.  The complete picture is the following:
\begin{description}
 \item[--- subgroup] $K\;$:
  \begin{itemize}
      \item[{\it (local)}] \, letting $\gerk=\mathrm{Lie}(K)$ we can consider its enveloping algebra $U(\gerk)$ which is a Hopf subalgebra of $U(\gerg)\,$;  we then set $\;\gerC\equiv\gerC(K):=U(\gerk)\;$;
      \item[{\it (global)}] \, functions which are $0$ on $K$ form a Hopf ideal  $ \, \calI \equiv \calI(K) \, $ inside $F[G]\,$, such that  $ \, F[K] \simeq F[G]\big/\calI \; $.
    \end{itemize}
 \item[--- homogeneous space] $G\big/K\;$:
   \begin{itemize}
           \item[{\it (local)}] \, let  $ \; \gerI \equiv \gerI(K) = U(\gerg) \cdot \gerk \; $:  this is a left ideal and two-sided coideal in $U(\gerg)\,$, and  $ \, U(\gerg) \big/ \gerI \, $  is the set of left--invariant differential operators on  $ G \big/ K \, $.
           \item[{\it (global)}] \, regular functions on the homogeneous space $G\big/K$ may be identified with $K$--invariant regular functions on $G\,$. We will let  $ \, \calC = \calC(K) = {F[G]}^K \, $;  this is a subalgebra and left coideal in $F[G]\,$.
                                        \par
         \quad  {\sl  $ \underline{\text{Warning}} \, $:\/}  this needs clarification!  The point is: can one recover the homogeneous space  $ G\big/K $  from  $ \, \calC(K) = {F[G]}^K \, $? The answer depends on geometric properties of  $ G\big/K $  itself   --- or (equivalently) of  $ K $  --- which we explain later on.
   \end{itemize}
\end{description}

\vskip7pt

   For any Hopf algebra $\calH$ we introduce the following notations: $\le^1$ will stand for ``unital subalgebra'', $\trianglelefteq$ for ``two-sided ideal'', $\trianglelefteq_l$ for ``left ideal'' and similarly $\dot{\le}$ will stand for ``subcoalgebra'',  $\coideal$  for ``two-sided coideal'' and $\coideal_\ell$ for ``left coideal''. When the same symbols will be decorated by a subindex referring to a specific algebraic structure their meaning should be modified accordingly,  e.g.~$\trianglelefteq_\calH$ \hbox{will stand for ``Hopf ideal'' and $\le_\calH$ for ``Hopf subalgebra''.}
                                                              \par
   With such notations, with any subgroup $K$ of $G$ there is associated one of the following algebraic objects:
\begin{equation}\label{subgroup}
  (a) \;\; \calI\trianglelefteq_{\cal H}F[G]  \qquad\;\;
  (b) \;\; \calC \le^1\coideal_\ell\, F[G]  \qquad\;\;
  (c) \;\; \gerI\trianglelefteq_l\,\coideal\, U(\gerg)  \qquad\;\;
  (d) \;\; \gerC\le_{\cal H}U(\gerg)  \;\;\;
\end{equation}

\noindent In the real case, one has to consider, together with (\ref{subgroup}),  additional requirements involving the $*$ structure and the antipode $S\,$,  namely
\begin{equation}\label{realsubgroup}
  (a) \;\;\; \calI^*=\calI  \qquad\;\;
  (b) \;\;\; S(\calC)^*=\calC  \qquad\;\;
  (c) \;\;\; S(\gerI)^*=\gerI  \qquad\;\;
  (d) \;\;\; \gerC^*=\gerC
\end{equation}

In the connected case algebraic objects of type $\calI\,$, $\gerI$ and $\gerC$ in (\ref{subgroup}) are enough to reconstruct either $K$ or $G\big/K\,$:
  $$  K \, = \, \mathrm{Spec}\big( F[G]/\calI\big) \, = \, \mathrm{exp}\big(\mathrm{Prim}(\gerC)\big)
\, = \, \mathrm{exp}\big(\mathrm{Prim}(\gerI)\big)  $$
where $\mathrm{Prim}(X)$ denotes the set of primitive elements of a bialgebra $X\,$.
                                           \par
   In contrast,  $ \, \calC(K) = {F[G]}^K \, $  might be not enough to reconstruct $K\,$, due to lack of enough global algebraic functions; this happens, for example, when $G\big/K$ is projective and therefore $\,\calC(K)=\mathbb{C}\,$.  Any group  $ K $  which can be reconstructed from its associated  $\calC$  is called  {\it observable\/}:  we shall now make this notion more precise.
 \vskip5pt
   Let us call $\tau$ the map that to any subgroup $K$ associates the algebra of invariant functions $F[G]^K$ and let us call $\sigma$ the map that to any subalgebra $A$ of $F[G]$ associates its stabilizer  $ \, \sigma(A)=\left\{\, g \in G\,\big|\, g\cdot f=f\,\,\, \forall \, f \in A \,\right\} \, $. These two maps are obviously inclusion--reversing. Furthermore they establish what is also known as a \emph{simple Galois correspondence\/}:  namely, for any subgroup $K$ and any subalgebra $A$  one has
  $$  (\sigma\circ\tau)(K) \supseteq K \; ,  \qquad (\tau\circ\sigma)(A) \supseteq A  $$
so that  $\left(\tau\circ\sigma\circ\tau\right)(K)=\tau(K)$, $\left(\sigma\circ\tau\circ\sigma\right)(A)=\sigma(A)$. A subgroup $K$ of $G$ such that $\left(\sigma\circ\tau\right)(K)=K$ is said to be \emph{observable\/}:  this means exactly that such a subgroup can be fully recovered from its algebra of invariant functions $\tau(K)$. If $K$ is any subgroup, then $\,\widehat{K}:=\left(\sigma\circ\tau\right)(K)\,$  is the smallest observable subgroup containing $K$; we will call it the \emph{observable hull } of $K$. Remark then that $\,\calC(K)=\calC\big(\widehat{K}\big)\,$.
                                          \par
   The following fact (together with many properties of observable subgroups), which gives a characterization of observable subgroups in purely geometrical terms, may be found in \cite{Gro}:

\vskip11pt

\noindent{\bf Fact}:  {\sl a subgroup $K$ of $G$ is observable if and only if  $ \, G\big/K \, $  is quasi--affine}.

\vskip11pt

   Let us now clarify how to pass from algebraic objects directly associated with subgroups to those corresponding to homogeneous spaces. Let $H$ be a Hopf algebra, with counit  $ \, \varepsilon \, $  and coproduct  $ \Delta\, $.  For any submodule $M\subseteq H$ define
\begin{equation}
\label{coinvariants}
M^+:=M\cap \Ker(\varepsilon) \;\; ,  \qquad
H^{{\mathrm co} M}:=\left\{\, y\in H\,\big|\, \left(\Delta(y)-y\otimes 1\right)\in H\otimes M \,\right\}
\end{equation}

Let $C$ be a (unital) subalgebra and left coideal of $H$ and define $\Psi(C)=H\cdot C^+$. Then $\Psi(C)$ is a left ideal and two-sided coideal in $H$. Conversely, let $I$ be a left ideal and two-sided coideal in $H$ and define $\, \Phi(I) := H^{{\mathrm co}I} \, $. Then $\Phi(I)$ is a unital subalgebra and left coideal in $H$. Also, this pair of maps $(\Phi,\Psi)$ defines a simple Galois correspondence, that is to say
\begin{itemize}
\item[{\it (a)}] \qquad  $ \Psi $ and $\Phi$ are inclusion-preserving;
\item[{\it (b)}] \qquad  $\left(\Phi\circ\Psi\right)(C)\supseteq C \; $,  \quad  $\left(\Psi\circ\Phi\right)(I)\subseteq I \; $;
\item[{\it (c)}] \qquad  $\Phi\circ\Psi\circ\Phi=\Phi\;$, \quad $\Psi\circ\Phi\circ\Psi=\Psi \; $.
\end{itemize}
(where the third property follows from the previous ones; see \cite{Mo, Sch, Ta} for further details).

\vskip7pt

   Let now $K$ be a subgroup of $G$ and let $\calI$, $\cal C$, $\gerI$, $\gerC$ the corresponding algebraic objects as described in (\ref{subgroup}).
%
%
 We can thus establish the following relations among them:

\begin{description}
  \item[subgroup vs. homogeneous space:]  objects directly related to the subgroup (namely,  $\calI$ and $\gerC$) and objects directly related to the homogeneous space (namely,  $\calC$ and $\gerI$) are linked by  $ \Psi $  and  $ \Phi $  as follows:
\begin{equation}
\label{correK}
  \gerI \, = \, \Psi(\gerC) \; ,  \qquad  \gerC \, = \, \Phi(\gerI) \; ,  \qquad
\calI \, \supseteq \, \Psi(\calC) \; ,  \qquad  \calC \, = \, \Phi(\calI)
\end{equation}

In particular, $K$ is observable if and only if $\,\calI=\Psi(\calC)\,$;  on the other hand, we have in general $\,\Psi(\calC(K))=\calI(\widehat{K})\;$.
  \item[orthogonality] with respect to the natural pairing between $F[G]$ and $U(\gerg)\,$:  this is expressed by the relations
\begin{equation}
\label{orthoK}
\calI \, = \, \gerC^\bot \; ,  \qquad  \gerC \, = \, \calI^\bot \; ,  \qquad  \calC \, = \, \gerI^\bot \; ,  \qquad  \gerI \, \subseteq \, \calC^\bot
\end{equation}

In particular, $K$ is observable if and only if $\,\gerI=\calC^\bot\,$;  on the other hand, we have in general $\,\calC(K)^\bot=\gerI\big(\widehat{K}\big)\;$.
                                                        \par
   Let us also remark that orthogonality intertwines the local and global description.
\end{description}

\medskip

\noindent
 {\bf The ``formal'' vs.~``global'' geometry approach.}  In the present approach we are dealing with geometrical objects   --- groups, subgroups and homogeneous spaces ---   which we describe via suitably chosen algebraic objects.  When doing that, universal enveloping algebras or subsets of them only provide a  {\sl local\/}  description   --- around a distinguished point: the unit element in a (sub)group, or its image in a coset (homogeneous) space.  Instead, function algebras yield a  {\sl global\/}  description, i.e.~they do carry information on the whole geometrical object; for this reason, we refer to the present approach as the ``global'' one.
                                                        \par
   The ``formal geometry'' approach instead only aims to describe a group by a topological Hopf algebra, which can be realized as an algebra of formal power series; in short, this is summarized by saying that we are dealing with a ``formal group''.  Subgroups and homogeneous spaces then are described by suitable subsets in such a formal series algebra (or in the universal enveloping algebra, as above): this again yields only a local description   --- in a formal neighborhood of a distinguished point ---   rather than a global one.
 \vskip4pt
   Now, the analysis above shows that an asymmetry occurs when we adopt the global approach.  Indeed, we might have problems when describing a homogeneous space by means of (a suitably chosen subalgebra of invariant) functions: technically speaking, this shows up as the occurrence of  {\sl inclusions}   --- rather than identities! ---   in formulas  \ref{correK}  and  \ref{orthoK}.  This is a specific, unavoidable feature of the problem, due to the fact that homogeneous spaces (for a given group) do not necessarily share the same geometrical nature   --- beyond being all quasi-projective ---   in particular they are not necessarily quasi-affine.
                                                        \par
   The case of those homogeneous spaces which are  {\sl projective\/}  is treated in  \cite{CFG},  where their quantizations are studied; in particular, there a suitable method to solve the problematic  ``$ \calC $--side''  of the QDP in that case is worked out, still in terms of ``global geometry'' but with a different tool (semi-invariant functions, rather than invariant ones).
 \vskip4pt
   In contrast, in the formal geometry approach such a lack of symmetry does not occur: in other words, it happens that  {\it every  {\sl formal}  (closed) subgroup is observable},  or  {\it every  {\sl formal}  homogeneous space is quasi-affine}.  This means that there is no need of worrying about observability, and the full picture   --- for describing a subgroup or homogeneous space, in four different ways ---   is entirely symmetric.  This was the point of view adopted in  \cite{CiGa},  where this complete symmetry of the formal approach is exploited to its full extent.

\medskip

\subsection{Poisson subgroups and Poisson quotients}  \label{Pois-sgrs_Pois-quots}

Let us now assume that $G$ is endowed with a complex Poisson group structure  corresponding to a Lie bialgebra structure on $\mathfrak g\,$,  whose Lie cobracket is denoted  $ \, \delta : \gerg \longrightarrow \gerg \wedge \gerg \, $. At the Hopf algebra level this means that $F[G]$ is a Poisson--Hopf algebra and $U(\mathfrak g)$ a co-Poisson Hopf algebra, in such a way that the duality pairing is compatible with these additional  structures (see \cite{CP} for basic definitions). Let us recall that the linear dual  $ \gerg^*$  inherits a Lie algebra structure;
 on the other hand, it has a natural Lie coalgebra structure, whose cobracket  $ \, \delta : \gerg^* \longrightarrow \gerg^* \wedge \gerg^* \, $  is the dual map to the Lie bracket of  $ \gerg \, $.  Altogether, this makes  $ \gerg^* $  into a Lie bialgebra, which said to be  {\sl dual\/}  to  $ \gerg \, $.
 Therefore, there exist Poisson groups whose tangent Lie bialgebra is $\mathfrak g^*\,$; we will assume one such connected group is fixed, we will denote it with $G^*$ and call it the dual Poisson group of $G$. In the real case the involution in $F[G]$ is a Poisson algebra antimorphism and the one in $U(\gerg)$ is a co-Poisson algebra antimorphism.

A closed subgroup $K$ of $G$ is called \emph{coisotropic} if its defining ideal $\calI(K)$ is a Poisson subalgebra, while it is called a \emph{Poisson subgroup} if   $\calI(K)$ is a Poisson ideal, the latter condition being equivalent to $K\hookrightarrow G$ being a Poisson map. Connected coisotropic subgroups can be characterized, at an infinitesimal level, by one of the following conditions on $\,\gerk\subseteq\gerg\,$:
  $$  \displaylines{
   \qquad   \text{\it (C-i)}  \qquad  \delta(\gerk) \, \subseteq \, \gerk \wedge \gerg \; ,  \quad  \text{that is  $ \gerk $  is a Lie coideal in  $ \gerg \; $},   \hfill  \cr
   \qquad   \text{\it (C-ii)}  \qquad  \text{$ \gerk^\bot $  is a Lie subalgebra of $\gerg^*\;$,}   \hfill  }  $$
while analogous characterizations of Poisson subgroups correspond to $\gerk$ being a Lie subcoalgebra or $\gerk^\bot$ being a Lie ideal.

The most important features of coisotropic subgroups, in this setting, is the fact that $G\big/K$ naturally inherits a Poisson structure from that of $G$. Actually, a Poisson manifold $(M\,,\omega_M)$ is called a {\it Poisson homogeneous $G$--space\/}  if there exists a smooth, homogeneous $G$--action  $ \, \phi : G \times M \longrightarrow M \, $  which is a Poisson map (w.r.~to the product Poisson structure on the domain).  In particular, we will say that $(M,\omega_M)$ is a  {\it Poisson quotient\/}  if it verifies one of the following equivalent conditions (cf.~\cite{Zak}):
  $$  \displaylines{
   \;\quad   \text{\it (P-i)}  \quad  \text{ there exists $x_0\in M$ whose stabilizer $G_{x_0}$ is coisotropic in $G$ };   \hfill  \cr
   \;\quad   \text{\it (P-ii)}  \quad  \text{ there exists $x_0\in M$ such that $\; \phi_{x_0}:G\to M \, $, $\, \phi(x_0,g)=\phi(x,g) \, $, }   \hfill  \cr
   \;\quad   \phantom{\text{\it (P-ii)}}  \quad  \text{ is a Poisson map };   \hfill  \cr
   \;\quad   \text{\it (P-iii)}  \quad  \text{ there exists $x_0\in M$ such that $\,\omega_M(x_0)=0\,$ }.   \hfill  }  $$
It is important to remark here that inside the same conjugacy class of subgroups of $G$ there may be subgroups which are Poisson, coisotropic, or non coisotropic.  Therefore, on the same homogeneous space there may exist many Poisson homogeneous structures, some of which make it into a Poisson quotient while some others do not.
                                                         \par
   For a fixed connected subgroup $K$ of a Poisson group $G$, with Lie algebra $\gerk$, one can consider the following descriptions in terms of the Poisson Hopf algebra $F[G]$ or of the co-Poisson Hopf algebra $U(\gerg)\,$:
\begin{align}
  \calI\le_{\calP}F[G] \; ,  &  \qquad  \calC\le_{\calP}F[G]\\
  \gerI \, \coideal_{\cal P} \; U(\gerg) \; ,  &  \qquad  \gerC \, \coideal_{\calP} \; U(\gerg)
\end{align}
where on first line we have global conditions and on second line local ones. Conversely each one of these conditions imply coisotropy of $G$ with the exception of the condition on $\calC$, which implies only that the observable hull $\widehat{K}$ is coisotropic. Therefore a connected, observable, coisotropic subgroup of $G$ is identified by one of the following algebraic objects:
\begin{align}
  \calI\trianglelefteq_{\cal H}\,\le_{\calP}F[G] \; ,  &  \qquad \calC\le^1 \coideal_\ell \le_{\calP} F[G]\\
  \gerI\trianglelefteq_l \coideal \, \coideal_{\cal P}\; U(\gerg) \; ,  &  \qquad  \gerC  \le_{\calH} \coideal_{\cal P} \; U(\gerg)
\end{align}

\noindent
 (still with the usual, overall restriction on the use of $\calC$, which in general only describes the observable hull $\widehat{K}\,$).
                                                         \par
   Thanks to self-duality in the notion of Lie bialgebra, with any Poisson group there is associated a natural \emph{Poisson dual}, which is fundamental in the QDP; note that a priori many such dual groups are available, but when dealing with the QDP such an (apparent) ambiguity will be solved.  As we aim to extend the QDP to coisotropic subgroups, we need to introduce a suitable notion of (Poisson) duality for coisotropic subgroups as well.

\smallskip

\begin{definition}\label{coisotropic duality}
Let $G$ be a Poisson group and $\,G^*$ a fixed Poisson dual.
\begin{enumerate}
\item If $\,K$ is coisotropic in $G$ we call  {\sl complementary dual}  of  $ K $  the unique connected subgroup $K^\bot$ in $G^*$ such that $\,\mathrm{Lie}(K^\bot)=\gerk^\bot\,$.
\item If $M$ is a Poisson quotient and $\,M\simeq G\big/K_M\,$ we call  {\sl complementary dual}  of $M$ the Poisson $G^*$--quotient $\,M^\bot:=G^*\!\big/K_M^\bot\,$.
\item For any subgroup $H$ of $\,G$ we call  {\sl coisotropic interior}  of $H$ the unique maximal, closed, connected, coisotropic subgroup  $\Hh$  of $G$ contained in $H\,$.
\end{enumerate}
\end{definition}

 \eject

\noindent{\bf Remarks:}
\begin{enumerate}
\item The complementary dual of a coisotropic subgroup is, trivially, a coisotropic subgroup whose complementary dual is the connected component of the one we started with:. Similarly, the complementary dual of a Poisson quotient is a Poisson quotient, and if we start with a Poisson quotient whose coisotropy subgroup (w.r.~to any point) is connected then taking twice the complementary dual brings back to the original Poisson quotient.
\item The coisotropic interior may be characterized, at an algebraic level, as the unique closed subgroup whose Lie algebra is maximal between Lie subalgebras of $\gerh$ which are Lie coideals in $\gerg\,$.
\end{enumerate}

\medskip

\begin{proposition}\label{bot-pro}
Let $K$ be any subgroup of $G$ and let $K^{\langle\bot\rangle}:=\big\langle \mathrm{exp}(\gerk^\bot)\big\rangle$  be the closed, connected, subgroup of $G^*$ generated by $\mathrm{exp}(\gerk^\bot)\,$. Then:
\begin{itemize}
\item[(a)] \, the Lie algebra $\gerk^{\langle\bot\rangle}$ of $K^{\langle\bot\rangle}$ is the Lie subalgebra of $\gerg^*$ generated by $\gerk^\bot$;
\item[(b)] \; $\gerk^{\langle\bot\rangle}$ is a Lie coideal of $\gerg^*$, hence $K^{\langle\bot\rangle}$ is a coisotropic subgroup of $\,G^*$;
\item[(c)] $\;K^{\langle\bot\rangle}=(\Kk\,)^\bot\,$; in particular if $K$is coisotropic then $\,K^{\langle\bot\rangle}=K^\bot\,$;
\item[(d)]  $ \; (K^{\langle\bot\rangle})^{\langle\bot\rangle} = \, \Kk \, $  and  $ K $  is coisotropic if and only if  $ \, (K^{\langle\bot\rangle})^{\langle\bot\rangle} = K \, $.
\end{itemize}
\end{proposition}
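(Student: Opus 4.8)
The plan is to push everything down to the infinitesimal level and to reduce the whole proposition to a single identity of subspaces of $\gerg^*$. Write $\langle\,\cdot\,\rangle_{\mathrm{Lie}}$ for the Lie subalgebra generated by a subset, and set $\stackrel{\circ}{\gerk}:=\mathrm{Lie}\big(\Kk\big)$, so that by Remark 2 after Definition \ref{coisotropic duality} the subspace $\stackrel{\circ}{\gerk}$ is the largest subspace of $\gerk$ that is simultaneously a Lie subalgebra and a Lie coideal of $\gerg$. The key identity I would establish is
$$ \big\langle \gerk^\bot \big\rangle_{\mathrm{Lie}} \; = \; \big(\stackrel{\circ}{\gerk}\big)^{\!\bot} \ . $$
Granting it, the four assertions come out quickly. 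By standard Lie theory the subgroup of $G^*$ generated by $\mathrm{exp}\big(\gerk^\bot\big)$ is the integral subgroup of $\langle\gerk^\bot\rangle_{\mathrm{Lie}}$; by the identity its Lie algebra is $\big(\stackrel{\circ}{\gerk}\big)^{\!\bot}=\mathrm{Lie}\big((\Kk)^\bot\big)$ (Definition \ref{coisotropic duality}(1), applied to the coisotropic subgroup $\Kk$), so this integral subgroup coincides with the connected subgroup $(\Kk)^\bot$ of $G^*$ --- in particular it is already closed, so $K^{\langle\bot\rangle}$ (defined as its Zariski closure) equals $(\Kk)^\bot$: this is (c), and reading off its Lie algebra, which by the identity is $\langle\gerk^\bot\rangle_{\mathrm{Lie}}$, gives (a). Next, since $\stackrel{\circ}{\gerk}$ is a Lie subalgebra of $\gerg$, the equivalence (C-i)$\,\Leftrightarrow\,$(C-ii) of \S\ref{Pois-sgrs_Pois-quots}, read with the roles of $\gerg$ and $\gerg^*$ interchanged, gives that $\gerk^{\langle\bot\rangle}=\big(\stackrel{\circ}{\gerk}\big)^{\!\bot}$ is a Lie coideal of $\gerg^*$, whence by (C-i) the connected subgroup $K^{\langle\bot\rangle}$ is coisotropic in $G^*$: this is (b). For the ``in particular'' in (c): if $K$ is coisotropic then $\gerk$ is a Lie subalgebra \emph{and} a Lie coideal of $\gerg$, so $\stackrel{\circ}{\gerk}=\gerk$ and hence $\mathrm{Lie}\big((\Kk)^\bot\big)=\gerk^\bot=\mathrm{Lie}\big(K^\bot\big)$, forcing $K^{\langle\bot\rangle}=(\Kk)^\bot=K^\bot$. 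Finally (d): by (b) the subgroup $K^{\langle\bot\rangle}$ is connected and coisotropic, hence equal to its own coisotropic interior, so applying (c) to it yields $\big(K^{\langle\bot\rangle}\big)^{\langle\bot\rangle}=\big(K^{\langle\bot\rangle}\big)^\bot$, the connected subgroup of $(G^*)^*=G$ with Lie algebra $\big(\big(\stackrel{\circ}{\gerk}\big)^{\!\bot}\big)^{\!\bot}=\stackrel{\circ}{\gerk}=\mathrm{Lie}\big(\Kk\big)$, i.e.\ $\big(K^{\langle\bot\rangle}\big)^{\langle\bot\rangle}=\Kk$; and $\Kk=K$ exactly when $K$ is itself a closed connected coisotropic subgroup, which is the asserted equivalence.

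So the real content is the displayed identity. Using that $\gerg$ is finite dimensional, $(\,\cdot\,)^\bot$ is an inclusion-reversing involution which, by (C-i)$\,\Leftrightarrow\,$(C-ii), exchanges Lie subalgebras of $\gerg^*$ with Lie coideals of $\gerg$; therefore $V:=\langle\gerk^\bot\rangle_{\mathrm{Lie}}^{\bot}$ is precisely the \emph{largest} Lie coideal of $\gerg$ contained in $(\gerk^\bot)^\bot=\gerk$, and the identity is equivalent to $V=\stackrel{\circ}{\gerk}$. One inclusion is immediate: $\stackrel{\circ}{\gerk}$ is a Lie coideal of $\gerg$ contained in $\gerk$ (Remark 2 again), so $\stackrel{\circ}{\gerk}\subseteq V$. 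The reverse inclusion is the crux, and it comes down to showing that $V$ is in fact a Lie \emph{subalgebra} of $\gerg$ --- for then $V$ is a Lie subalgebra which is a Lie coideal and lies inside $\gerk$, hence $V\subseteq\stackrel{\circ}{\gerk}$ by the maximality property of $\stackrel{\circ}{\gerk}$. Here I would invoke that $\delta$ is a $1$--cocycle, $\delta([x,y])=\mathrm{ad}_x(\delta y)-\mathrm{ad}_y(\delta x)$ with $\mathrm{ad}_x$ acting on $\gerg\wedge\gerg$ as a derivation: for $x,y\in V$ we have $\delta y\in V\wedge\gerg$, so $\mathrm{ad}_x(\delta y)\in[x,V]\wedge\gerg+V\wedge\gerg\subseteq\big(V+[V,V]\big)\wedge\gerg$, and symmetrically for $\mathrm{ad}_y(\delta x)$; together with $\delta(V)\subseteq V\wedge\gerg$ this shows $V+[V,V]$ is again a Lie coideal of $\gerg$, and it still lies in $\gerk$ since $[V,V]\subseteq[\gerk,\gerk]\subseteq\gerk$. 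By maximality of $V$ we get $V+[V,V]\subseteq V$, i.e.\ $[V,V]\subseteq V$, as wanted. This cocycle-plus-maximality step is the one genuine computation, and I expect it to be the main (if mild) obstacle; the rest is bookkeeping with annihilators and the local/global dictionary of \S\ref{Pois-sgrs_Pois-quots}.

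Two points of hygiene remain. First, the passage from Lie algebras to groups in (a)--(c) uses only the elementary facts that an arcwise connected subgroup of a complex (algebraic) group is the integral subgroup of its Lie subalgebra and that two integral subgroups with the same Lie algebra coincide; in particular this is exactly what forces $K^{\langle\bot\rangle}$, a priori defined only as a Zariski closure, to already be a closed connected subgroup. Second, for (d) one must have fixed a Poisson dual of $G^*$ and identified it with $G$ (as is done implicitly throughout, via $(\gerg^*)^*=\gerg$), so that $\big(K^{\langle\bot\rangle}\big)^{\langle\bot\rangle}$ is a subgroup of $G$ and the equality with $\Kk$ is meaningful; and the ``only if'' half of the equivalence in (d) is read under the standing assumption that $K$ is connected (otherwise $\Kk$ is merely the identity component of the coisotropic group $K$), in keeping with the conventions of \S\ref{Pois-sgrs_Pois-quots}.
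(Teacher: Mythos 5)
Your proof is correct and follows essentially the same route as the paper's: the heart in both cases is the orthogonality computation identifying $\big\langle \gerk^\bot \big\rangle^{\bot}$ with the maximal Lie subalgebra-and-Lie-coideal $\stackrel{\circ}{\gerk}$ of $\gerg$ contained in $\gerk$, justified by the $1$--cocycle identity for $\delta\big([x,y]\big)$, after which (a)--(d) follow by the subalgebra/coideal duality and the passage to connected subgroups. The only difference is organizational: the paper runs the cocycle argument on the $\gerg^*$ side (the Lie subalgebra generated by the Lie coideal $\gerk^\bot$ is again a Lie coideal, which yields (b) directly), whereas you run it on the $\gerg$ side via the maximality step $V+[V,V]\subseteq V$, thereby spelling out explicitly the point the paper only asserts in passing.
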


\begin{proof}
 Part {\it (a)} is trivial. As for {\it (b)}, since $\gerk=(\gerk^\bot)^\bot$ is a Lie subalgebra of $\gerg\,$, we have that $\gerk^\bot$ is a Lie coideal in $\gerg^*\,$: therefore, due to the identity
  $$  \delta\big([x,y]\big)  \, = \,  {\textstyle \sum\limits_{[y]}} \big( [x,y_{[1]}] \otimes y_{[2]} + y_{[1]} \otimes [x,y_{[2]}] \big) + {\textstyle \sum\limits_{[x]}} \big( [x_{[1]},y] \otimes x_{[2]} + x_{[1]} \otimes [x_{[2]},y] \big)  $$
(where $\delta(z)=\sum_{[z]}z_{[1]}\otimes z_{[2]}$ for $z\in\gerg^*$), the Lie subalgebra $\langle\gerk^\bot\rangle$ of $\gerg^*$ generated by $\gerk^\bot$ is a Lie coideal too.
It follows then by claim {\it (a)} that $K^{\langle\bot\rangle}$ is coisotropic. Thus {\it (b)} is proved.

As for part {\it (c)} we have
  $$  \left(\gerk^{\langle\bot\rangle}\right)^\bot  \; = \;  \langle\gerk^\bot\rangle^\bot  \; = \;
\Bigg(\, {\textstyle \bigcap\limits_{\substack{\gerh\le_{\cal L}\,\gerg^*\\ \gerh\supseteq \gerk^\bot}}} \gerh \Bigg)^{\!\!\bot}
\; = \;  {\textstyle \sum\limits_{\substack{\gerh\le_{\cal L}\,\gerg^*\\ \gerh\supseteq \gerk^\bot}}} \gerh  \; = \;
{\textstyle \sum\limits_{\substack{\gerf\coideal_{\cal L}\gerg \\ \gerf\supseteq \gerk}}} \gerf
\; = \;  \stackrel{\circ}{\gerk}  $$
(with $\le_{\cal L}$ meaning ``Lie subalgebra'' and $\coideal_{\cal L}$ meaning ``Lie coideal'') where
$\stackrel{\circ}{\gerk}$ is exactly the maximal Lie subalgebra and Lie coideal of $\gerg$ contained in $\gerk\,$. To be precise, this last statement follows from the above formula for  $ \delta\big([x,y]\big) \, $,  since that formula implies that the Lie subalgebra generated by a family of Lie coideals is still a Lie coideal.

Now $\;\stackrel{\circ}{\gerk}\,=\mathrm{Lie}(\Kk)\,$, so $\,\mathrm{Lie}(K^{\langle\bot\rangle})=
\gerk^{\langle\bot\rangle}=\left(\left(\gerk^{\langle\bot\rangle}\right)^\bot\right)^\bot\!=
\big(\stackrel{\circ}{\gerk}\big)^\bot\!=\mathrm{Lie}(\Kk)^\bot\,$ implies $\,K^{\langle\bot\rangle}=
(\stackrel{\circ}{\gerk})^\bot\,$ as we wished to prove.
If, in addition, $K$ is coisotropic then, obviously, $\,K^{\langle\bot\rangle}=K\,$. All other statements follow easily.
\end{proof}

\bigskip

\section{Strict, proper, weak quantizations}  \label{quantizations}

The purpose of this section is to fix some terminology concerning the meaning of the word ``quantization'' and to describe some possible ways of quantizing a (closed) subgroup, or a homogeneous space.  We set the algebraic machinery needed for talking of ``quantization'' and ``specialization'':
these notions must be carefully specified before approaching the
construction of Drinfeld's functors.
%

\smallskip

   Let  $ q $  be an indeterminate,  $ \, \Cqqm \, $  the ring of
complex-valued Laurent polynomials in  $ q \, $,  and  $ \, \C(q)
\, $  the field of complex-valued rational functions in  $ q \, $.
   Denote by  $ \HA $  the category of all Hopf algebras over
$ \Cqqm $  which are torsion-free as  $ \Cqqm $--modules.
                                             \par
   Given  a Hopf algebra $ H $  over the field  $\C(q) \, $,  a subset  $ \overline{H} \subseteq H$
 is called  {\it a  $ \Cqqm $--integral  form}  (or
simply  {\it a\/  $ \Cqqm $--form})   if it is
a  $ \Cqqm $--Hopf  subalgebra of  $ \, H \, $  and  $ \; H_F
:= \C(q) \otimes_{\C[q,q^{-1}]} \overline{H} = H \, $.  Then  $ \, \overline{H} \, $  is torsion-free as a
$ \Cqqm $--module,  hence  $ \, \overline{H} \in \HA \, $.
                                               \par
   For any  $ \Cqqm $--module  $ M $,  we set  \hbox{$ \, M_1 := M \big/
(q-1) M = \C \otimes_{\C[q,q^{-1}]} M \, $:}  this is a  $ \C $--module
(via  $ \, \Cqqm \rightarrow \Cqqm \Big/ (q-1) = \C \, $),  called
{\sl specialization of  $ M $  at  $ \, q = 1 \, $}.
                                                     \par
   Given two  $ \C(q) $--modules  $ A $  and  $ B $  and a
$ \C(q) $--bilinear  pairing  \hbox{$ \, A \times B \longrightarrow F\, $,}  for any  $ \Cqqm $--submodule  $ \, A_\times \subseteq A \, $
we set:
%
\begin{equation}  \label{dual}
    {A_\times\phantom{\big)}}^{\hskip-8pt\bullet}  \; := \;
\Big\{\, b \in B \,\Big\vert\; \big\langle A_\times, \, b \big\rangle
\subseteq \Cqqm \Big\}
\end{equation}

\noindent
 In such a setting, we call  $ {A_\times\phantom{\big)}}^{\hskip-8pt
\bullet} $  {\sl the  $ \Cqqm $--dual  of}  $ A_\times \, $.

\medskip

   We will call \emph{quantized universal enveloping algebra} (or, in short, QUEA) any $U_q\in\HA$ such that $U_1:=(U_q)_1$ is isomorphic to $U(\gerg)$ for some Lie algebra $\gerg\,$, and we will call \emph{quantized function algebra} (or, in short, QFA) any $\,F_q\in\HA\,$ such that  $ \; F_1:={(F_q)}_1 \; $  is isomorphic to $F[G]$ for some connected algebraic group $G$  and, in addition, the following technical condition holds:
  $$  {\textstyle \bigcap\limits_{n \geq 0}}{(q-1)}^n F_q  \,\; = \;\,  {\textstyle \bigcap\limits_{n \geq 0}} {\big( (q-1) F_q \, + \, \Ker(\epsilon_{{}_{F_q}}) \big)}^n  $$
   \indent   We will add the specification that such quantum algebras are \emph{real} whenever the starting object is a $*$--Hopf algebra. As a matter of notation, we write
  $$  \U_q  \; := \;  \C(q) \otimes_{\C[q,q^{-1}]} U_q  \quad ,  \qquad  \F_q  \; := \;  \C(q)\otimes_{\C[q,q^{-1}]} F_q \quad  .  $$

When $U_q$ is a (real) QUEA, its specialization $U_1$ is a (real) co--Poisson Hopf algebra so that $\gerg$ is in fact a (real) Lie bialgebra. Similarly, for any (real) QFA $F_q$ the specialization $F_1$ is a (real) Poisson-Hopf algebra and therefore $G$ is a (real) Poisson group (see \cite{CP} for details).
                                                                \par
   On occasions it is useful to consider simultaneous quantizations of both the universal enveloping algebra and the function algebra, or, in a larger generality, of a pair of dual Hopf algebra. Let  $ \, H \, , K \in \HA \, $  and assume that there exists a pairing of Hopf algebras  $ \, \langle\,\ ,\ \rangle : H \times K \longrightarrow \Cqqm \, $.  If the pairing is such that
\begin{itemize}
  \item[{\it (a)}]   $ \; H = K^\bullet \, $,  $ \, K = H^\bullet \, $
(notation of (\ref{dual}))  w.r.t.~the pairing  $ \, \mathbb{H} \times \mathbb{K}
\rightarrow \Cq \, $,  for  $ \, \mathbb{H} := \C(q) \otimes_{\C[q,q^{-1}]} H \, $,  $ \, \mathbb{K} := \C(q) \otimes_{\C[q,q^{-1}]} K \, $,  induced from  $ \, H \times K \rightarrow \Cq \, $
  \item[{\it (b)}]  \; the Hopf pairing  $ H_1 \times K_1 \rightarrow
\C$  given by specialization at  $ q = 1$  is perfect (i.e.\ non-degenerate)
\end{itemize}
then we will say that  {\it $H$ and $K$ are dual to each other}.  Note that all these assumptions imply that the initial pairing between  $ H $  and  $ K $  is perfect.  When  $ \, H = U_q(\mathfrak g) \, $  is a QUEA and  $ \, K = F_q[G] \, $  is a QFA, if the specialized pairing at $1$ is the natural pairing between $U(\mathfrak g)$ and $F[G]$ we will say that  {\it the pair $(U_q(\mathfrak g)\,,F_q[G])$ is a double quantization of $\,(G,\mathfrak g)\,$}.

\bigskip

   Let us now move to the case in which $G$ is a Poisson group and $K$ a subgroup. We want to define a reasonable notion of ``quantization'' of $K$ and of the corresponding homogeneous space $G\big/K\,$. There is a standard way to implement this, which actually implies   --- cf.~Lemma \ref{strictisproper}  and  Proposition \ref{coisotropic creed}  later on ---   the additional constraint that $K$ be  {\sl coisotropic}.

\medskip

\begin{definition}  \label{def_strict-quant}
 Let  $ F_q[G] $  and  $ U_q(\gerg) $  be a QFA and a QUEA for  $ G $  and  $ \, \gerg $ and let
  $$  \displaylines{
   \pi_{F_q} \colon \, F_q[G] \relbar\joinrel\twoheadrightarrow F_q[G] \Big/ \! (q\!-\!1) F_q[G] \, \cong \, F[G]  \cr
   \pi_{U_q} \colon U_q(\gerg) \relbar\joinrel\twoheadrightarrow U_q(\gerg)\! \Big/ \! (q-\!1) \, U_q(\gerg) \cong U(\gerg)  }  $$
be the specialization maps.  Let  $ \calI $,  $ \calC $,  $ \gerI $  and
$ \gerC $  be the algebraic objects associated with the subgroup  $ K $
of  $ G $  (see \ref{subgroup}).  We call  {\sl ``strict quantization''\/}
(and sometimes we shall drop the adjective  {\sl ``strict''\/}) of each
of them any object  $ \calI_q \, $,  $ \calC_q \, $,  $ \gerI_q $  or
$ \gerC_q $  respectively, such that
\begin{equation} \label{strictquant}
  \begin{array}{ccc}
     {\it (a)} \qquad  \calI_q \ideal_\ell \coideal \; F_q[G] \;\; ,
&  \quad  \pi_{F_q}(\calI_q) \, = \, \calI \;\; ,
&  \quad  \pi_{F_q}(\calI_q) \, \cong \, \calI_q \big/ (q\!-\!1) \, \calI_q  \\
     {\it (b)} \qquad    \calC_q \leq^1 \coideal_\ell \, F_q[G] \;\; ,
&  \quad  \pi_{F_q}(\calC_q) \, = \, \calC \;\; ,
&  \quad  \pi_{F_q}(\calC_q) \, \cong \, \calC_q \big/ (q\!-\!1) \, \calC_q  \\
     {\it (c)} \qquad   \gerI_q \ideal_\ell \coideal \;\; U_q(\gerg) \;\; ,
&  \quad  \pi_{U_q}(\gerI_q) \, = \, \gerI \;\; ,
&  \quad  \pi_{U_q}(\gerI_q) \, \cong \, \gerI_q \big/ (q\!-\!1) \, \gerI_q  \\
     {\it (d)} \qquad    \gerC_q \leq^1 \coideal_\ell \, U_q(\gerg) \;\; ,
&  \quad  \pi_{U_q}(\gerC_q) \, = \, \gerC \;\; ,
&  \quad  \pi_{U_q}(\gerC_q) \, \cong \, \gerC_q \big/ (q\!-\!1) \, \gerC_q
  \end{array}
\end{equation}
\end{definition}

\medskip

   In order to explain this definition let us start by considering the first two conditions in each line of (\ref{strictquant}).
\begin{itemize}
\item[a)] A left ideal and two-sided coideal in a QFA quantizes the Hopf ideal of functions which are zero on a (closed) \emph{subgroup};
\item[b)] a left coideal subalgebra in a QFA quantizes the algebra of invariant functions on a \emph{homogeneous space};
\item[c)]  a left ideal and two-sided coideal in a QUEA quantizes the  infinitesimal algebra on a \emph{homogeneous space};
\item[d)] a  left coideal subalgebra in a QUEA quantizes the universal enveloping subalgebra of a \emph{subgroup}.
\end{itemize}

Once again, we must stress the fact that $\calC_q\,$, as was explained in Proposition \ref{correK}, has to be seen as a quantization of the observable hull $\widehat{K}$ rather than of $K$ itself.

\bigskip

   Let us now be more precise about the last condition in the previous definition. By asking  $ \; \calI_q \big/ (q-1) \calI_q \cong \pi_{F_q}(\calI_q) = \calI \; $  we mean the following: the specialization map sends $\calI_q$ inside $F[G]$. This map factors through  $ \, \calI_q \big/ (q-1) \calI_q \, $;  in addition, we require that the induced map $ \, \calI_q \big/ (q-1) \calI_q \longrightarrow F[G] \, $  be a bijection on  $ \cal I \, $.  Of course this bijection will respect the whole Hopf structure, since $\pi_{F_q}$ does. Now, since
  $$  \pi_{F_q}(\calI_q)  \, = \,  \calI_q \Big/ \big( \calI_q \cap (q-1)F_q[G] \big)  $$
this property may be equivalently rephrased by saying that  $ \; \calI_q \cap(q\!-\!1)\,F_q[G] = (q\!-\!1) \, \calI_q \; $  as well.  The previous discussions may be repeated unaltered for all four algebraic objects under consideration.  {\sl An equivalent definition of strict quantizations is therefore the following:}
 \begin{equation} \label{strictquant2}
\begin{array}{ccc}
   \hskip-11pt   (a) \qquad  \calI_q \ideal_\ell \coideal \; F_q[G] \;\; ,
&  \pi_{F_q}(\calI_q) \, = \, \calI \;\; ,
&  \calI_q\cap(q-1)F_q[G] \, = \, (q-1) \, \calI_q  \hskip5pt  \\
   \hskip-11pt   (b) \qquad  \calC_q \leq^1 \coideal_\ell \, F_q[G] \;\; ,
&  \pi_{F_q}(\calC_q) \, = \, \calC \;\; ,
&  \calC_q\cap(q-1)F_q[G] \, = \, (q-1) \, \calC_q  \hskip5pt  \\
   \hskip-11pt   (c) \qquad  \gerI_q \ideal_\ell \coideal \;\; U_q(\gerg) \;\; ,
&  \pi_{U_q}(\gerI_q) \, = \, \gerI \;\; ,
&  \gerI_q\cap(q-1)U_q(\gerg) \, = \, (q-1) \, \gerI_q  \hskip5pt  \\
   \hskip-11pt   (d) \qquad  \gerC_q \leq^1 \coideal_\ell \, U_q(\gerg) \;\;  ,
&  \pi_{U_q}(\gerC_q) \, = \, \gerC \;\; ,
&  \gerC_q\cap(q-1)U_q(\gerg) \, = \, (q-1) \, \gerC_q  \hskip5pt
\end{array}
\end{equation}

   The purpose of the last condition   --- which is often mentioned by saying that  $ \gerC_q $  is a  {\sl flat\/}  quantization (typically, in the literature on deformation quantization) ---   should be clear: indeed, removing it means losing any control on what is contained, in quantization, inside the kernel of the specialization map.

\smallskip

   Although the just mentioned notion of quantization appears to be, in many respect, the ``correct'' one   --- and indeed is typically the one considered in literature ---   another notion of quantization naturally appears when one has to deal with quantum duality principle.

\smallskip

\begin{definition}\label{properq}
 Let  $ F_q[G] $  and  $ U_q(\gerg) $  be a QFA and a QUEA for $G$  and  $\gerg$ and let
  $$  \displaylines{
   \pi_{F_q} \colon \, F_q[G] \relbar\joinrel\twoheadrightarrow F_q[G] \Big/ \! (q\!-\!1) F_q[G] \; \cong \; F[G]  \cr
   \pi_{U_q} \colon U_q(\gerg) \relbar\joinrel\twoheadrightarrow U_q(\gerg)\! \Big/ \! (q-\!1) \, U_q(\gerg) \; \cong \; U(\gerg)  }  $$
be the specialization maps. Let  $\nabla := \Delta - \Delta^{{\mathrm op}}$. Let  $ \calI $,  $ \calC $,  $ \gerI $  and $\gerC$  be the algebraic objects associated with the subgroup  $ K $
of  $G$  (see \ref{subgroup}).  We call  {\sl ``proper quantization''\/}
 of each of them any object  $ \calI_q \, $,  $ \calC_q \, $,  $ \gerI_q $  or
$ \gerC_q $  respectively, such that
 \begin{equation}\label{properquant}
   \begin{array}{ccc}
 (a) \qquad  \calI_q \ideal_\ell \coideal \; F_q[G] \;\; ,
&  \pi_{F_q}(\calI_q) \, = \, \calI \;\; ,
&  \big[ \calI_q \, , \calI_q \big] \, \subseteq \, (q-1) \, \calI_q  \hskip5pt  \\
   (b) \qquad  \calC_q \leq^1 \coideal_\ell \, F_q[G] \;\; ,
&  \pi_{F_q}(\calC_q) \, = \, \calC \;\; ,
&  \big[ \calC_q \, , \calC_q \big] \, \subseteq \, (q-1) \, \calC_q^{\phantom{|}} \hskip5pt  \\
   (c) \qquad  \gerI_q \ideal_\ell \coideal \;\; U_q(\gerg) \;\; ,
&  \pi_{U_q}(\gerI_q) \, = \, \gerI \;\; ,
&  \; \nabla(\,\gerI_q) \, \subseteq \, (q\!-\!1) \, U_q(\gerg) \wedge \gerI_q  \hskip5pt  \\
   (d) \qquad  \gerC_q \leq^1 \!\coideal_\ell \; U_q(\gerg) \;\; ,
&  \pi_{U_q}(\gerC_q) \, = \, \gerC \;\; ,
&  \; \nabla(\gerC_q) \, \subseteq \, (q\!-\!1) \, U_q(\gerg) \wedge \gerC_q  \hskip5pt
               \end{array}
  \end{equation}
\end{definition}

\vskip9pt

   The link between these two notions of quantization is the following:

\vskip15pt

\begin{lemma}\label{strictisproper}
Any strict quantization is a proper quantization.
\end{lemma}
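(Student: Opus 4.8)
The plan is to observe that, row by row, the first two conditions imposed on $\calI_q,\calC_q,\gerI_q,\gerC_q$ in Definition~\ref{properq} are \emph{literally the same} as the first two conditions of Definition~\ref{def_strict-quant}, written in its reformulated shape \eqref{strictquant2}; so the whole content of the lemma is that in each of the four cases the third (``flatness'') condition of \eqref{strictquant2} forces the third condition of \eqref{properquant}. I would split the verification into the two function-algebra rows $(a),(b)$, handled via commutativity of $F[G]$, and the two enveloping-algebra rows $(c),(d)$, handled via cocommutativity of $U(\gerg)$.

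\emph{Rows $(a),(b)$.} Since $F_q[G]\big/(q-1)F_q[G]\cong F[G]$ is commutative, the commutator map lands in $\Ker(\pi_{F_q})$, i.e. $\big[F_q[G],F_q[G]\big]\subseteq(q-1)\,F_q[G]$. For row $(b)$, $\calC_q$ is a subalgebra, so $[\calC_q,\calC_q]\subseteq\calC_q$, whence $[\calC_q,\calC_q]\subseteq\calC_q\cap(q-1)F_q[G]$, which equals $(q-1)\calC_q$ by the strict condition $(b)$. For row $(a)$, $\calI_q$ is only a left ideal, but it is a left ideal \emph{sitting inside} $F_q[G]$, so $\calI_q\!\cdot\!\calI_q\subseteq\calI_q$ and hence $[\calI_q,\calI_q]\subseteq\calI_q$; the same two-line computation gives $[\calI_q,\calI_q]\subseteq\calI_q\cap(q-1)F_q[G]=(q-1)\calI_q$. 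This settles $(a)$ and $(b)$.

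\emph{Rows $(c),(d)$.} With $\nabla:=\Delta-\Delta^{\mathrm{op}}$, fix $x$ in $\gerI_q$ (resp.\ $\gerC_q$) and combine two observations. First, $U_q(\gerg)\big/(q-1)U_q(\gerg)\cong U(\gerg)$ is cocommutative, so $(\pi_{U_q}\!\otimes\pi_{U_q})\big(\nabla(x)\big)=0$; as $U_q(\gerg)$ is $\Cqqm$-torsion-free this gives $\nabla(x)\in(q-1)\big(U_q(\gerg)\otimes U_q(\gerg)\big)$, and since $\nabla(x)$ is skew and $\tfrac12\in\Cqqm$ it even lies in $(q-1)\big(U_q(\gerg)\wedge U_q(\gerg)\big)$. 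Second, because $\gerI_q$ is a two-sided coideal, $\Delta(x)$ and hence $\nabla(x)$ lie in the flip-stable submodule $\gerI_q\otimes U_q(\gerg)+U_q(\gerg)\otimes\gerI_q$, and being skew, $\nabla(x)\in U_q(\gerg)\wedge\gerI_q$ (in the $\gerC_q$ case it is even more direct: $\Delta(x)=\sum_i u_i\otimes c_i$ with $c_i\in\gerC_q$ gives $\nabla(x)=\sum_i u_i\wedge c_i\in U_q(\gerg)\wedge\gerC_q$). Thus $\nabla(x)$ lies simultaneously in $U_q(\gerg)\wedge\gerI_q$ and in $(q-1)\big(U_q(\gerg)\wedge U_q(\gerg)\big)$, and one must conclude $\nabla(x)\in(q-1)\,U_q(\gerg)\wedge\gerI_q$.

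\emph{The main obstacle} is precisely this last passage: one needs that $U_q(\gerg)\wedge\gerI_q$ is $(q-1)$-saturated in $U_q(\gerg)\wedge U_q(\gerg)$, equivalently that $\big(U_q(\gerg)\wedge U_q(\gerg)\big)\big/\big(U_q(\gerg)\wedge\gerI_q\big)$ has no $(q-1)$-torsion — and this is exactly where strictness is used. Indeed the third strict condition says $\gerI_q\cap(q-1)U_q(\gerg)=(q-1)\gerI_q$, i.e. $U_q(\gerg)\big/\gerI_q$ has no $(q-1)$-torsion. Localizing the exact sequence $0\to\gerI_q\to U_q(\gerg)\to U_q(\gerg)\big/\gerI_q\to0$ at the maximal ideal $(q-1)$ of $\Cqqm$ yields an exact sequence of modules over the discrete valuation ring $R:=\Cqqm_{(q-1)}$ whose cokernel $C:=\big(U_q(\gerg)\big/\gerI_q\big)\otimes R$ is $R$-torsion-free, hence $R$-flat; then $\Lambda_R^2 C$ is $R$-torsion-free (being a direct summand of the $R$-flat module $C\otimes_R C$, as $\tfrac12\in R$), and the canonical surjection $\Lambda_R^2\big(U_q(\gerg)\otimes R\big)\twoheadrightarrow\Lambda_R^2 C$ has kernel exactly $\big(U_q(\gerg)\wedge\gerI_q\big)\otimes R$, so $\big(U_q(\gerg)\wedge U_q(\gerg)\big)\big/\big(U_q(\gerg)\wedge\gerI_q\big)$ localized at $(q-1)$ is $R$-torsion-free. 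As $(q-1)$-torsion is detected after localizing at $(q-1)$, the unlocalized quotient has no $(q-1)$-torsion, which is what was required; the same reasoning applies verbatim with $\gerC_q$ in place of $\gerI_q$. Assembling the four rows proves the lemma.
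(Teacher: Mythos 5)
Your proof is correct and follows essentially the same route as the paper's: rows $(a),(b)$ via $[F_q,F_q]\subseteq(q-1)F_q$ together with $\calI_q\cap(q-1)F_q=(q-1)\calI_q$, and rows $(c),(d)$ via $\nabla(U_q)\subseteq(q-1)\,U_q^{\wedge 2}$, the coideal property, and the strictness condition. The only difference is that the paper simply asserts the key saturation step $\big(U_q\wedge\gerI_q\big)\cap(q-1)\,U_q^{\wedge 2}\subseteq(q-1)\,U_q\wedge\gerI_q$ as a one-line consequence of strictness, whereas you justify it carefully by localizing at $(q-1)$ and using flatness of exterior powers over the resulting discrete valuation ring.
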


\begin{proof}
 This is an easy consequence of definitions.  Indeed, let  $K$ be a subgroup of $G\,$.  If  $ \calI_q := \calI\big(\widehat{K}\big)$  is any strict quantization of $ \calI(K)$,  we have
  $$  \calI_q \cap (q-1) \, F_q \, = \, (q-1) \, \calI_q  $$
 by assumption, and moreover  $ \; \big[ F_q \, , F_q \big] \, \subseteq \, (q-1) \, F_q \; $.  Then
  $$  \big[\calI_q \, , \calI_q \big]  \; \subseteq \;  \calI_q \cap \big[ F_q \, , F_q \big]  \; \subseteq \; \calI_q \cap (q-1) \, F_q  \; = \;  (q-1) \, \calI_q  $$
thus  $ \big[\calI_q \, , \calI_q \big] \subseteq (q-1) \, \calI_q \, $,  \; i.e.~$ \calI_q $  is proper. A similar argument works for quantizations of type  $ \calC_q(K) \, $.  Also, if  $ \gerI_q(K) $  is any strict quantization of  $ \gerI(K) \, $,  then we have
$ \; \gerI_q \cap (q-1) \, U_q \, = \, (q-1) \, \gerI_q \; $  by assumption, and moreover
$\nabla(U_q) \, \subseteq \, (q-1) \, U_q^{\,\wedge 2} \, $.  Then
  $$  \nabla(\,\gerI_q)  \; \subseteq \;  \big( U_q \wedge \gerI_q \big) \cap \nabla(U_q)  \; \subseteq \; \big( U_q \wedge \gerI_q \big) \cap (q-1) \, U_q^{\,\wedge 2}  \; \subseteq \;  (q-1) \, U_q \wedge \gerI_q  $$
so that $ \gerI_q $  is proper.  A similar argument works for quantizations of type  $\gerC_q(K)\,$.
\end{proof}

\smallskip

\begin{remark}
 {\it The converse to  Lemma \ref{strictisproper}  here above is false}.
 \vskip4pt
  Indeed, there exist quantizations (of subgroups / homogeneous spaces) which are proper but  {\sl not\/}  strict: we present an explicit example   --- of type  $ \calC_q $  ---   in  Subsection \ref{non-coiso}  later on.
                                                                            \par
   This means that giving two different versions of ``quantization'' does make sense, in that they actually capture two  {\sl inequivalent\/}  notions   --- hierarchically related via  Lemma \ref{strictisproper}.
\end{remark}

\medskip

   The following statement clarifies why such definitions actually apply only to the (restricted) case of coisotropic subgroups (this result can be traced back to \cite{Lu}, where it is mentioned as \emph{coisotropic creed}).

\medskip

\begin{proposition}\label{coisotropic creed}
Let $K$ be a subgroup of $G$ and assume a proper quantization of it exists. Then $K$ is coisotropic or, in case the quantization is $\calC_q\,$, its observable hull $\widehat{K}$ is coisotropic.
\end{proposition}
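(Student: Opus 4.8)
The plan is to show that, for each of the four types of objects in Definition~\ref{properq}, the ``flatness-type'' (third) condition of a proper quantization descends, upon specialization at $\,q = 1\,$, to precisely the condition on $\,\calI\,$, $\,\calC\,$, $\,\gerI\,$ or $\,\gerC\,$ which --- as recalled in Subsection~\ref{Pois-sgrs_Pois-quots} --- forces coisotropy of $\,K\,$ (of $\,\widehat K\,$ in the $\,\calC\,$ case).  The only real content lies in passing from a congruence modulo $\,(q-1)\,$ over $\,\Cqqm\,$ to an honest relation over $\,\C\,$.

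First I would recall the relevant semiclassical structures.  Since $\,F_q := F_q[G]\,$ is a QFA, $\,F_1 \cong F[G]\,$ is commutative, so $\,[F_q, F_q] \subseteq (q-1)\,F_q\,$ (as already used in the proof of Lemma~\ref{strictisproper}) and the Poisson bracket of $\,F[G]\,$ is given by $\,\{a, b\} = \pi_{F_q}\!\big( (q-1)^{-1}[\tilde a, \tilde b] \big)\,$ for any lifts $\,\tilde a, \tilde b \in F_q\,$ of $\,a, b\,$.  Dually, since $\,U_q := U_q(\gerg)\,$ is a QUEA, $\,U_1 \cong U(\gerg)\,$ is cocommutative, so $\,\nabla(U_q) \subseteq (q-1)\,(U_q \wedge U_q)\,$ and the co-Poisson cobracket of $\,U(\gerg)\,$ is given by $\,\delta(x) = (\pi_{U_q} \!\otimes \pi_{U_q})\big( (q-1)^{-1}\nabla(\tilde x) \big)\,$ for any lift $\,\tilde x \in U_q\,$ of $\,x\,$; on $\,\mathrm{Prim}\big(U(\gerg)\big) = \gerg\,$ this restricts to the Lie cobracket of the Lie bialgebra $\,\gerg\,$.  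In each formula the division by $\,(q-1)\,$ is legitimate, with result lying where asserted, because $\,F_q\,$, $\,U_q\,$ and $\,U_q \otimes U_q\,$ are torsion-free over the PID $\,\Cqqm\,$.

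Next I would run the argument case by case on (\ref{properquant}).  For $\,\calI_q\,$: given $\,a, b \in \calI = \pi_{F_q}(\calI_q)\,$, pick lifts $\,\tilde a, \tilde b \in \calI_q\,$; then $\,[\tilde a, \tilde b] \in [\calI_q, \calI_q] \subseteq (q-1)\,\calI_q\,$, so by torsion-freeness $\,(q-1)^{-1}[\tilde a, \tilde b] \in \calI_q\,$ and hence $\,\{a, b\} \in \pi_{F_q}(\calI_q) = \calI\,$; thus $\,\calI = \calI(K)\,$ is a Poisson subalgebra of $\,F[G]\,$, i.e.\ $\,K\,$ is coisotropic by definition.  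For $\,\calC_q\,$ the identical computation shows that $\,\calC = \calC(K)\,$ is a Poisson subalgebra of $\,F[G]\,$, which by Subsection~\ref{Pois-sgrs_Pois-quots} forces only the observable hull $\,\widehat K\,$ (not necessarily $\,K\,$ itself) to be coisotropic --- the loss being exactly the known identity $\,\calC(K) = \calC(\widehat K)\,$.  For $\,\gerI_q\,$: given $\,x \in \gerI = \pi_{U_q}(\gerI_q)\,$ with lift $\,\tilde x \in \gerI_q\,$, the condition $\,\nabla(\gerI_q) \subseteq (q-1)\,U_q \wedge \gerI_q\,$ yields $\,(q-1)^{-1}\nabla(\tilde x) \in U_q \wedge \gerI_q\,$, whence $\,\delta(x) \in U(\gerg) \wedge \gerI\,$; so $\,\delta(\gerI) \subseteq U(\gerg) \wedge \gerI\,$, which is one of the conditions in Subsection~\ref{Pois-sgrs_Pois-quots} implying that $\,K\,$ is coisotropic.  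The case $\,\gerC_q\,$ is the same argument verbatim with $\,\gerC_q\,$ in place of $\,\gerI_q\,$, giving $\,\delta(\gerC) \subseteq U(\gerg) \wedge \gerC\,$ and hence again that $\,K\,$ is coisotropic.  (Alternatively, in these last two cases one can verify condition (C-i) directly: restricting $\,\delta\,$ to $\,\gerk = \mathrm{Prim}(\gerI) = \mathrm{Prim}(\gerC)\,$ and using a PBW decomposition one checks that $\,(\gerg \wedge \gerg) \cap (U(\gerg) \wedge \gerI) = (\gerg \wedge \gerg) \cap (U(\gerg) \wedge \gerC) = \gerk \wedge \gerg\,$, so $\,\delta(\gerk) \subseteq \gerk \wedge \gerg\,$.)

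The step I expect to be the real --- if modest --- obstacle is exactly the bookkeeping behind ``by torsion-freeness'': one must be sure that the bracket (resp.\ the image of $\,\nabla\,$) lands inside $\,(q-1)\,\calI_q\,$ (resp.\ inside $\,(q-1)\,(U_q \wedge \gerI_q)\,$), not merely inside $\,(q-1)F_q\,$ (resp.\ $\,(q-1)(U_q \wedge U_q)\,$), so that after dividing by $\,(q-1)\,$ --- a well-defined operation in a torsion-free module --- one stays inside the very $\,\Cqqm\,$-form being quantized and its specialization is precisely $\,\calI\,$, $\,\calC\,$, $\,\gerI\,$ or $\,\gerC\,$; the former fact is built into the proper-quantization hypotheses, the latter into the requirements $\,\pi_{F_q}(\calI_q) = \calI\,$, $\,\pi_{U_q}(\gerI_q) = \gerI\,$, and so on.  Everything else is routine manipulation within the specialization formalism of Section~\ref{quantizations}.
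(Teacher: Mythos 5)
Your proposal is correct and takes essentially the same approach as the paper: the paper's proof treats the $\calI_q$ case exactly as you do (lift $f,g$ to $\calI_q$, use $[\calI_q,\calI_q]\subseteq(q-1)\,\calI_q$ to divide by $q-1$ inside $\calI_q$, then project to get $\{f,g\}\in\calI$), notes that for $\calC_q$ one only reaches $\calC(K)=\calC\big(\widehat{K}\big)\leq_{\calP}F[G]$ so just $\widehat{K}$ is coisotropic, and dispatches the $\gerI_q$ and $\gerC_q$ cases with ``similar arguments work'' --- which is precisely the cobracket argument you spell out. No gaps.
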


\begin{proof}
 Assume  $ \, \calI_q $  exists.  Let  $ \, f, g \in \calI\, $,  and let  $ \, \varphi, \gamma \in \calI_q \, $  with  $ \,
\pi_{F_q}(\varphi) = f \, $,  $ \, \pi_{F_q}(\gamma) = g \, $.  Then by
definition  $ \, \{f,g\} = \pi_{F_q}\big( (q-1)^{-1} [\varphi,\gamma] \big)
\, $.  But
  $$  [\varphi,\gamma] \in \big[\calI_q \, , \calI_q \big] \subseteq(q-1) \, \calI_q  $$
\noindent  by assumption, hence  $ \, (q-1)^{-1} [\varphi,\gamma] \in
\calI_q \, $,  thus  $ \, \{f,g\} = \pi_{F_q} \big( (q-1)^{-1} [\varphi,\gamma]
\big) \in \pi_{F_q}(\calI_q) = \calI \, $,  which means that  $ \, \calI \, $
is closed for the Poisson bracket.  Thus (see  (2.6))  $ K$  is coisotropic.
                                          \par
   Similar arguments work when dealing with  $ \calC_q \, $,  $ \gerI_q $
or  $ \gerC_q \, $.  We shall only remark that working with  $ \calC_q $
we end up with  $ \, \calC\big(\widehat{K}\big) = \calC(K) \leq_{\cal P}
F[G] \, $, whence  $\widehat{K}$  is coisotropic.
\end{proof}

\vskip5pt

Since we would like to show also what happens in the non coisotropic case, we will consider, also, the weakest possible   --- na{\"\i}ve ---   version of quantization.

\medskip

\begin{definition}\label{weakq}
 Let  $ F_q[G] $  and  $ U_q(\gerg) $  be a QFA and a QUEA for
$ G $  and  $ \gerg \, $  and let
  $$  \displaylines{
   \pi_{F_q} \colon \, F_q[G] \relbar\joinrel\twoheadrightarrow F_q[G] \Big/ \! (q\!-\!1) F_q[G] \; \cong \; F[G]  \cr
   \pi_{U_q} \colon U_q(\gerg) \relbar\joinrel\twoheadrightarrow U_q(\gerg)\! \Big/ \! (q-\!1) \, U_q(\gerg) \; \cong \; U(\gerg)  }  $$
be the specialization maps.  Let  $ \calI $,  $ \calC $,  $ \gerI $  and
$ \gerC $  be the algebraic objects associated with the subgroup  $ K $
of  $ G $  (see \ref{subgroup}).  We call  {\sl ``weak quantization''\/}  of each
of them any object  $\calI_q\,$,  $\calC_q\,$,  $\gerI_q$  or
$\gerC_q$  respectively, such that
 \begin{equation} \label{weakquant}
      \begin{array}{ccc}
   (a) \qquad  \calI_q \ideal_\ell \coideal \; F_q[G] \;\; ,
&   \;\;  \pi_{F_q}(\calI_q) \, = \, \calI  \\
   (b) \qquad  \calC_q \leq^1 \coideal_\ell \, F_q[G] \;\; ,
&  \;\;  \pi_{F_q}(\calC_q) \, = \, \calC  \\
   (c) \qquad  \gerI_q \ideal_\ell \coideal \;\; U_q(\gerg) \;\; ,
&  \;\;  \pi_{U_q}(\gerI_q) \, = \, \gerI  \\
   (d) \qquad  \gerC_q \leq^1 \coideal_\ell \, U_q(\gerg) \;\; ,
&  \;\;  \pi_{U_q}(\gerC_q) \, = \, \gerC
      \end{array}
 \end{equation}
\end{definition}

\smallskip

It is obvious that strict or proper quantizations are weak. Let us remark that every subgroup of  $ G $  is quantizable in the weak sense, since we may just consider  e.g.~$ \, \calI_q := \pi_{F_q}^{-1}(\calI) \, $  to be a quantization of  $ \calI \, $.  As na{\"\i}f as it may seem, this remark will play a r{\^o}le in what follows.

\smallskip

Let us lastly remark how the real case should be treated.

\medskip

\begin{definition}\label{realq}
 Let  $( F_q[G]\,,*) $  and  $ (U_q(\gerg)\,,*) $  be a real QFA and a real QUEA for
$ G $  and  $ \gerg \, $.  Let $\calI_q\,$, $\calC_q\,$, $\gerI_q$  and  $\gerC_q$ be subgroup quantizations (either strict, proper or weak). Then such quantizations are called  {\sl real}  if
\begin{equation}\label{realquant}
  {\big( S(\calI_q) \big)}^\star \, = \, \calI_q \; ,  \qquad  \calC_q^\star \, = \, \calC_q \; ,  \qquad
{\big( S(\gerI_q) \big)}^\star \, = \, \gerI_q \; ,  \qquad  \gerC_q^\star \, = \, \gerC_q
\end{equation}
\end{definition}

\medskip

\begin{free text}
 {\bf The formal quantization approach.}  {\rm In the present work we are dealing with global quantizations. In  \cite{CiGa}  instead we treated  {\sl formal quantizations\/}:  these are topological Hopf  $ \C[[h]] $--algebras  which for  $ \, h = 0 \, $  yield back the (formal) Hopf algebras associated with a (formal) group.  In this case, such objects as  $ \calI_q \, $,  $ \calC_q \, $,  $ \gerI_q $  and  $ \gerC_q $  are defined in the parallel way.  However, in  \cite{CiGa}  we did  {\sl not\/}  consider the notions of  {\sl proper\/}  nor  {\sl weak\/}  quantizations but only dealt with strict quantizations.  Actually, one can consider the notions of proper or weak quantizations in the formal quantization setup as well; then the relation between these and strict quantizations will be again the same as we showed here above.
                                                                      \par
   We point out also that the semiclassical limits of formal quantizations are just formal Poisson groups, or their universal enveloping algebras, or subgroups, homogeneous spaces, etc.  In any case, this means   --- see the end of  Subsection \ref{subgrps-homspaces}  ---   that no restrictions on subgroups apply (all are ``observable'') nor on homogeneous spaces (all are ``quasi-affine'').}
\end{free text}

\bigskip

\section{Quantum duality principle}

 Drinfeld's quantum duality principle (cf.~\cite{Dr}, \S 7; see  also \cite{Ga1}  for a proof) has a stronger version (see \cite{Ga3}) best suited for  {\sl our\/}  quantum groups   --- in the sense of  Section \ref{quantizations}.

\medskip

   Let  $ H $  be any Hopf algebra in  $\mathcal{HA}$ and  let
\begin{equation}\label{kernelI}
  I  \; := \;  \Ker\, \Big( H \, {\buildrel \epsilon \over {\relbar\joinrel\twoheadrightarrow}}
\; \Cqqm \,{\buildrel {ev}_1 \over {\relbar\joinrel\relbar\joinrel\twoheadrightarrow}}\,
 \C \,\Big)  \; = \;  \Ker\,\Big( H \,{\buildrel {ev}_1 \over
{\relbar\joinrel\relbar\joinrel\twoheadrightarrow}}\, H \big/ (q\!-\!1) \, H \,{\buildrel
\bar{\epsilon} \over {\relbar\joinrel\twoheadrightarrow}}\; \C \,\Big)
\end{equation}
Then $I$ is a Hopf ideal of  $ H $.  We define
\begin{equation}\label{check}
  H^\vee  \; := \;  {\textstyle \sum_{n \geq 0}}
\, {(q-1)}^{-n} I^n  \; = \;
 {\textstyle \bigcup_{n \geq 0}} \, {\big(
{(q-1)}^{-1} I \, \big)}^n  \;\; \Big( \subseteq  \C(q) \otimes_{\C[q,q^{-1}]} H \Big)
\end{equation}
Notice that, setting  $ \, J := \Ker\, \Big( H \,{\buildrel
\epsilon \over {\relbar\joinrel\twoheadrightarrow}}\; \Cqqm \Big) \, $,  one has  $ \, I = (q-1) \cdot 1_{\scriptscriptstyle H} + J \, $,  so that
\begin{equation}\label{check2}
 H^\vee  \; = \;  {\textstyle \sum_{n \geq 0}} \,
{(q-1)}^{-n} J^n \; = \; {\textstyle \sum_{n \geq 0}}
\, {\big( {(q-1)}^{-1} J \, \big)}^n
\end{equation}
Consider, now, for every  $n \in \N$ the iterated coproduct
$\Delta^n \colon H \rightarrow H^{\otimes n} \; $  where
\[
\Delta^0:= \epsilon \qquad \Delta^1 :={ \mathrm id}_H \qquad
\Delta^n := \big( \Delta \otimes \id_H^{\otimes (n-2)} \big) \circ \Delta^{n-1}\quad {\mathrm if}\,\, n \ge 2 \; .
\]
For any ordered subset  $ \, \Sigma = \{i_1, \dots, i_k\} \subseteq
\{1, \dots, n\} \, $  with  $ \, i_1 < \dots < i_k \, $,  \, define
the morphism  $ \; j_{\mathrm \Sigma} : H^{\otimes k}
\longrightarrow H^{\otimes n} \; $  by
\[
 j_{\mathrm\Sigma} (a_1 \otimes \cdots \otimes a_k) :=
b_1 \otimes \cdots\otimes b_n\,\,\mathrm{where}\,\, \left\{\begin{array}{ccc} b_i := 1 &\mathrm{ if} &\, i \notin \Sigma \\
b_{i_m} := a_m &\mathrm{if} & 1 \leq m \leq k
\end{array}\right.
\]
then set  $ \, \Delta_\Sigma := j_{\mathrm \Sigma} \circ \Delta^k \, $,  $ \, \Delta_\emptyset := \Delta^0 \, $,  and  $ \, \delta_\Sigma := \sum_{\Sigma' \subset \Sigma} {(-1)}^{n- \left| \Sigma' \right|} \Delta_{\Sigma'} \, $,  $ \, \delta_\emptyset := \epsilon \, $.  By the inclusion-exclusion principle, the inverse formula
$\Delta_\Sigma = \sum_{\Psi\subseteq \Sigma} \delta_\Psi$  holds.  We shall use notation
$ \, \delta_0 := \delta_\emptyset \, $,  $ \, \delta_n :=
\delta_{\{1, 2, \dots, n\}} \, $,  and the key identity  $ \;
\delta_n = {(\id_{\mathrm H} - \epsilon)}^{\otimes n}
\circ \Delta^n \, $,  \, for all  $ \, n \in \N_+ \, $.  Given
$H \in {\mathcal H}$,  we define
\begin{equation}\label{prime}
 H' := \big\{\, a \in H \,\big\vert\, \delta_n(a) \in
{(q-1)}^n H^{\otimes n} , \; \forall \,\, n \in \N \, \big\}
\quad  \big( \! \subseteq H \, \big) \, .
\end{equation}

\smallskip

\begin{theorem}[Global Quantum Duality Principle]\label{GQDP}
(cf.~\cite{Ga3}) For any  $ \, H \in \HA \,$ one has:
\begin{itemize}
\item[(a)] $H^\vee$ is a QUEA and $H'$ is a QFA. Moreover the following inclusions hold:
\begin{equation}\label{GQDP-incl}
   H \subseteq {\big( H^\vee \big)}'  \,\; ,  \quad  H \supseteq {\big( H' \big)}^{\!\vee}  \; ,  \quad  H^\vee \! = \! \big( \! \big(H^\vee\big)' \,\big)^{\!\vee}  \; ,  \quad  H' \! = \! \big( \! \big(H'\big)^{\!\vee} \big)'
\end{equation}
\item[(b)] $H=\big(H^\vee\big)'\iff H$ is a QFA,  \;  and  \;  $ H = \big(H'\big)^\vee\iff H$ is a QUEA;
\item[(c)] If $\,G$ is a Poisson group with Lie bialgebra  $ \gerg \, $,  then
\[
F_q[G]^\vee\Big/(q-1)F_q[G]^\vee=U(\mathfrak g^*)\,\qquad U_q(\mathfrak g)^\prime\big/ (q-1)U_q(\mathfrak g)^\prime= F[G^*]
\]
where $G^*$ is some connected Poisson group dual to  $ G $;
\item[(d)] Let $F_q[G]$ and $U_q(\gerg)$  be dual to each other w.r.~to some perfect Hopf pairing. Then ${F_q[G]}^\vee$ and ${U_q(\gerg)}^\prime$ are dual to each other w.r.~to the same pairing.
\end{itemize}
\end{theorem}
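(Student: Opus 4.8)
This is proved in \cite{Ga3}; the plan is to recall the strategy there, which splits into a \emph{formal} part --- that $H'$ and $H^\vee$ are well-behaved Hopf $\Cqqm$-(sub)algebras satisfying \eqref{GQDP-incl} --- and a \emph{semiclassical} part, the identification of the specializations at $q=1$ with Poisson-dual objects. For the formal part I would first check that $H'$ is a Hopf $\Cqqm$-subalgebra of $H$ and that $H^\vee$ is a Hopf $\Cqqm$-subalgebra of $\C(q)\otimes_{\Cqqm}H$. For $H'$ the only delicate point is closure under multiplication, which follows from a Leibniz-type rule expressing $\delta_n(ab)$ through the $\delta_k(a)$ and $\delta_l(b)$ (a sum over pairs of subsets of $\{1,\dots,n\}$ that cover it); closure under $\Delta$ and $S$ is then routine from $\delta_n=(\id-\epsilon)^{\otimes n}\circ\Delta^n$. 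For $H^\vee=\sum_n(q-1)^{-n}I^n$ one uses that $I$ is a Hopf ideal, so that $\Delta\big((q-1)^{-1}I\big)\subseteq H^\vee\otimes H^\vee$ and $S\big((q-1)^{-1}I\big)\subseteq H^\vee$, while $H^\vee$ is generated as an algebra by $(q-1)^{-1}I$. Granting all this, the inclusions $H\subseteq(H^\vee)'$, $H\supseteq(H')^\vee$ and the two idempotency identities of \eqref{GQDP-incl} are bookkeeping with the two definitions.

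\textbf{The semiclassical part of (a): the main obstacle.} The crux is that $H^\vee\big/(q-1)H^\vee$ is a QUEA and $H'\big/(q-1)H'$ a QFA. For $H^\vee$: the $I$-adic (equivalently $J$-adic) filtration exhibits $H^\vee$ as a Rees-type algebra whose specialization at $q=1$ is a connected graded Hopf algebra, and one checks it is cocommutative --- because $\nabla:=\Delta-\Delta^{\mathrm{op}}$ carries $(q-1)^{-1}I$, hence (being a twisted derivation, with $H^\vee$ a Hopf subalgebra) all of $H^\vee$, into $(q-1)\,\big(H^\vee\otimes H^\vee\big)$. By the Cartier--Kostant--Milnor--Moore theorem it is therefore $U(\mathfrak{L})$ for a Lie algebra $\mathfrak{L}$, which becomes a Lie bialgebra with cobracket $(q-1)^{-1}\nabla\bmod(q-1)$. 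Dually, one must show $H'\big/(q-1)H'$ is commutative --- this is the subtler half; when $H$ carries a paired dual it follows from the Fourier relation of part (d), and in general from a direct $\delta_n$-estimate on commutators --- so that it is $F[G']$ for a connected algebraic group $G'$, with Poisson bracket induced by $(q-1)^{-1}[\,\cdot\,,\,\cdot\,]$; the technical QFA condition on $H'$ is checked along the way. The real difficulty throughout is that $\Cqqm$ is not a field, so flatness and the absence of $(q-1)$-torsion must be controlled at every step.

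\textbf{Part (b).} One implication is immediate: by (a), $H^\vee$ is a QUEA, hence $(H^\vee)'$ is a QFA, so $H=(H^\vee)'$ forces $H$ to be a QFA; symmetrically in the QUEA case. For the converse, let $H$ be a QFA. We already have $H\subseteq(H^\vee)'$, and both sides are $\Cqqm$-forms of $\C(q)\otimes_{\Cqqm}H$ whose specializations at $q=1$ agree under the inclusion; a graded-Nakayama argument gives $H+(q-1)(H^\vee)'=(H^\vee)'$, and iterating this, together with the $(q-1)$-adic separatedness packaged into the technical QFA axiom, upgrades it to $H=(H^\vee)'$. The QUEA case is identical with the roles of $'$ and $\vee$ exchanged.

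\textbf{Parts (c) and (d).} For (c): $F_q[G]^\vee$ is a QUEA by (a), say with semiclassical limit $U(\mathfrak{L})$; one identifies $\mathfrak{L}=\gerg^*$ by observing that its underlying vector space is $\mathrm{Prim}\big(U(\mathfrak{L})\big)\cong\mathfrak{m}_e/\mathfrak{m}_e^2=\gerg^*$, the cotangent space of $G$ at $e$; that its Lie bracket, induced by $(q-1)^{-1}[\,\cdot\,,\,\cdot\,]$ on $F_q[G]$, is the linearization at $e$ of the Poisson bracket of $F[G]$, which by the standard Poisson-group computation is exactly the bracket on $\gerg^*$ dual to the cobracket $\delta$ of $\gerg$; and that its cobracket, coming from $(q-1)^{-1}\nabla$, is dual to the Lie bracket of $\gerg$. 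Since a connected Poisson group $G^*$ with $\mathrm{Lie}(G^*)=\gerg^*$ exists classically, this yields the first identity of (c), and the second is dual. For (d): one proves the ``Fourier'' relation that $F_q[G]^\vee$ equals the set of $\eta\in\C(q)\otimes_{\Cqqm}F_q[G]$ with $\langle\eta,U_q(\gerg)'\rangle\subseteq\Cqqm$, and symmetrically, by pairing the $I$-adic expansion of $F_q[G]^\vee$ against the $\delta_n$-characterization of $U_q(\gerg)'$ and using perfectness of the given pairing and of its $\C(q)$-linear extension; the pairing this induces on specializations is then the evaluation pairing $U(\gerg^*)\times F[G^*]\to\C$, non-degenerate because $G^*$ is connected, which is the duality asserted in (d).
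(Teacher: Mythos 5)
This theorem is not proved in the paper at all: it is quoted from \cite{Ga3}, so there is no internal proof to compare against, only the fragments of that argument the paper reuses later (e.g.\ in Theorem \ref{converse} and Theorem \ref{complimentary}, which invoke \cite{Ga3}, Proposition 4.3 and Theorem 4.7). Your outline faithfully reproduces the strategy of that reference --- the $\delta_n$-Leibniz and commutator identities (recorded here as (\ref{deltaone})--(\ref{deltatwo})) for the closure and the commutativity modulo $(q-1)$ of $H'$, the generation of $H^\vee$ by $(q-1)^{-1}I$ with cocommutative specialization and a Milnor--Moore/PBW identification, completion and degree arguments plus the technical QFA condition for part \textit{(b)}, the identification of the semiclassical Lie algebra with $\mathfrak{m}_e\big/\mathfrak{m}_e^{\,2}\cong\gerg^*$ for part \textit{(c)}, and the pairing argument for part \textit{(d)} --- so it is essentially the same approach as the cited source.
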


\bigskip

A number of remarks are due, at this point:

\begin{enumerate}
\item The Poisson group $G^*$ dual to $G$ appearing in  {\it (c)\/}  of Theorem \ref{GQDP} does depend on $U_q(\gerg)$  which is given as a data. Different choices of $U_q(\gerg)$, though associated with the same Lie bialgebra $\gerg$ may give rise to a different connected Poisson dual group $G^*$.
\item For all Hopf  $ \Cq $--algebra  $\mathbb H$ the existence of a $\Cqqm$-integral form $H_f$ which is a QUEA at $q=1$ is equivalent to the existence of a $\Cqqm$--integer form $H_u$ which is a QFA at $q=1\,$.
\item All claims above have obvious analogues in the real case.
\item If $H$ is a Hopf algebra and $\Phi\subseteq\mathbb N$ is a finite subset, then (\cite{KT}, Lemma 3.2)
\begin{equation}
\label{deltaone}
\delta_\Phi(ab) \; = {\textstyle \sum\limits_{\Lambda\cup Y=\Phi}} \delta_\Lambda(a) \, \delta_Y(b) \qquad \forall \;\; a, b \in H
\end{equation}
furthermore, if  $ \, \Phi \ne \emptyset \, $  we have
\begin{equation}
\label{deltatwo}
\delta_\Phi(ab - ba) \; =
{\textstyle \sum\limits_{\substack{\Lambda \cup Y = \Phi  \\ \Lambda \cap Y \not= \emptyset}}}
\big( \delta_\Lambda(a) \, \delta_Y(b) - \delta_Y(b) \,
\delta_\Lambda(a) \big) \qquad  \forall \;\; a, b \in H
\end{equation}
The above formulas will be used frequently in what follows
\end{enumerate}

Having clarified the exact statement of quantum duality principle that we have in mind, let us extend it to objects of subgroup type as in Definition \ref{weakq}, i.e.~to left coideal subalgebras and to left ideals and two-sided coideals   --- either in $F_q[G]$ or in $U_q(\gerg)\,$.  This was already done in \cite{CiGa} where we only considered  {\sl local\/}  (i.e.~over $\mathbb C[[h]]$) quantizations. Let us remark that the quantum duality principle we have in mind not only exchanges the r{\^o}le of algebras of functions with that of universal enveloping algebras, but also exchanges the r{\^o}le of subgroups with that of homogeneous spaces. At the semiclassical level, the pair of dual objects is given by a coisotropic subgroup $H$ and a Poisson quotient $G^*\!\big/H^\bot\,$. When $H$ is a Poisson subgroup, its orthogonal $H^\bot$ turns out to be normal in $G^*$ and $G^*\!\big/H^\bot \cong H^*$ as a Poisson group, thus recovering the usual quantum duality principle. In particular, we will consider a process moving along the following draft:
  $$  \begin{array}{cccccccc}
 \hbox{\it (a)} \quad  &  \calI \; \big( \! \subseteq \! F[G] \, \big)  &
\hskip-3pt {\buildrel (1) \over \longrightarrow} \hskip-3pt  &  \calI_q \; \big( \! \subseteq \!
F_q[G] \, \big)  &  \hskip-3pt {\buildrel (2) \over \longrightarrow} \hskip-3pt  &
{\calI_q}^{\!\curlyvee} \; \big( \! \subseteq \! {F_q[G]}^\vee \, \big)  &
\hskip-3pt {\buildrel (3) \over \longrightarrow} \hskip-3pt  &  {\calI_1}^{\!\curlyvee}
\; \big( \! \subseteq \! U(\gerg^*) \big)
\\
   \hbox{\it (b)} \quad  &  \calC \; \big( \! \subseteq F[G] \, \big)  &
\hskip-3pt {\buildrel (1) \over \longrightarrow} \hskip-3pt  &  \calC_q \; \big( \!
\subseteq \! F_q[G] \,\big)  &  \hskip-3pt {\buildrel (2) \over \longrightarrow} \hskip-3pt
&  {\calC_q}^{\!\!\triangledown} \; \big( \! \subseteq \! {F_q[G]}^\vee \,\big)
&  \hskip-3pt {\buildrel (3) \over \longrightarrow} \hskip-3pt  &
{\calC_1}^{\!\!\triangledown} \; \big( \! \subseteq \!
U(\gerg^*) \big)  \\
   \hbox{\it (c)} \quad  &  \gerI \; \big( \! \subseteq \! U(\gerg) \,\big)  &
\hskip-3pt {\buildrel (1) \over \longrightarrow} \hskip-3pt  &  \gerI_q \; \big( \! \subseteq
\! U_q(\gerg) \, \big)  &  \hskip-3pt {\buildrel (2) \over \longrightarrow} \hskip-3pt  &
{\gerI_q}^{\! !} \; \big( \! \subseteq \! {U_q(\gerg)}' \,\big)  &
\hskip-3pt {\buildrel (3) \over \longrightarrow} \hskip-3pt  &  {\gerI_1}^{\! !} \;
\big( \! \subseteq \! F[G^*] \big)  \\
   \hbox{\it (d)} \quad  &  \gerC \; \big( \! \subseteq \! U(\gerg) \,\big)  &
\hskip-3pt {\buildrel (1) \over \longrightarrow} \hskip-3pt  &  \gerC_q \; \big( \!
\subseteq \! U_q(\gerg) \,\big)  &  \hskip-3pt {\buildrel (2) \over \longrightarrow} \hskip-3pt
&  {\gerC_q}^{\!\!\Lsh} \; \big( \! \subseteq \! {U_q(\gerg)}' \,\big)  &
\hskip-3pt {\buildrel (3) \over \longrightarrow} \hskip-3pt  &  {\gerC_1}^{\!\!\Lsh} \;
\big( \! \subseteq \! F[G^*] \big)
\end{array}  $$
\noindent
where arrows $(1)$ are quantizations, arrows $(3)$ are specializations at $q=1$ and the definition of arrows $(2)$ will be the core of what follows. It will turn out that:
\begin{enumerate}
\item  each one of the
right-hand-side objects above is one of the four algebraic
objects which describe a closed connected subgroup of  $ G^* \, $:
namely, the correspondence is
 \vskip1pt
   \centerline{ $
\text{\it (a)} \, =\joinrel\Longrightarrow \, \text{\it (c)}
\; ,  \hskip17pt
\text{\it (b)} \, =\joinrel\Longrightarrow \, \text{\it (d)}
\; ,  \hskip17pt
\text{\it (c)} \, =\joinrel\Longrightarrow \, \text{\it (a)}
\; ,  \hskip17pt
\text{\it (d)} \, =\joinrel\Longrightarrow \, \text{\it (b)}
\; . $ }
\item  the four quantizations of subgroups of  $ G^* $
so obtained are always  {\sl proper}   --- hence the subgroups of  $ G^* $
associated with them are  {\sl coisotropic}.
\item   if we begin with  {\sl
strict\/}  quantizations, and we start from  a subgroup $K$,  then the quantization of the unique
coisotropic closed connected subgroup of  $G^* $  mentioned above
is  {\sl strict\/}  as well, and the subgroup itself is  $ K^\perp $
(cf.~Definition \ref{coisotropic duality}), with some care in case  {\it (b)},  i.e.~if we
start from  $ \calC(K)$.  This will partially generalize to  {\sl weak\/}
quantizations, for which, starting from a subgroup  $K$  of $G$,  the unique
coisotropic closed connected subgroup of  $G^*$  obtained above is
$ K^{\langle \perp \rangle} $  (cf.~Proposition ~\ref{bot-pro}).
\end{enumerate}

\medskip

Let us fix, in what follows, quantizations $U_q(\mathfrak g)$ and $F_q[G]$ as in  Section \ref{quantizations}.  Unless explicitly mentioned we will not assume that this is a double quantization. To simplify notations, let us set
  $$  \displaylines{
   \U_q \, := \, \U_q(\gerg)  \quad ,  \qquad  U_q \, := \, U_q(\gerg)  \quad ,
\qquad  {U_q}' \, := \, {U_q(\gerg)}'  \cr
   \F_q \, := \, \F_q[G]  \quad ,  \qquad  F_q \, := \, F_q[G]  \quad ,  \qquad  {F_q}^\vee \, := \, {F_q[G]}^\vee  }  $$
As mentioned in the first remark after Theorem \ref{GQDP}, this implies that a specific connected Poisson dual $G^*$ of $G$ is selected (it depends on the choice of  $ \, U_q := U_Q(\gerg) \, $,  not only on  $ \gerg $  itself). Let us consider quantum subgroups $ \; \calI_q \, $,  $ \, \calC_q\, $,  $ \, \gerI_q \, $  and  $ \, \gerC_q \, $  as defined in \ref{weakq}.

 \eject

\begin{definition}\label{maps}
Using notations as in (\ref{kernelI}) we define:
\begin{itemize}
  \item[(a)] \hskip7pt  $ \displaystyle{ {\calI_q}^{\!\curlyvee}
\; := \;  {\textstyle \sum\limits_{n=1}^\infty} \, {(q-1)}^{-n} \cdot
I^{\,n-1} \cdot \calI_q  \; = \;  {\textstyle \sum\limits_{n=1}^\infty}
\, {(q-1)}^{-n} \cdot J^{\,n-1} \cdot \calI_q } $
  \item[(b)] \hskip7pt  $ \displaystyle{ {\calC_q}^{\!\!
\triangledown}  \; := \;  {\textstyle \sum\limits_{n=0}^\infty}
{(q-1)}^{-n} \cdot {\big( \calC_q \cap I \,\big)}^n \, = \;
{\textstyle \sum\limits_{n=0}^\infty} \, {(q-1)}^{-n} \cdot
{\big( \calC_q \cap J \big)}^n } $
  \item[(c)] \hskip7pt  $ \displaystyle{ {\gerI_q}^{\! !}  \; :=
\;  \Big\{\, x \in \gerI_q \;\Big\vert\; \delta_n(x) \in {(q-1)}^n \,
{\textstyle \sum_{s=1}^n} \, {U_q}^{\otimes (s-1)} \! \otimes \gerI_q
\otimes {U_q}^{\otimes (n-s)} \! , \; \forall\; n \in \N_+ \Big\} } $
  \item[(d)] \hskip7pt  $ \displaystyle{ {\gerC_q}^{\!\!\Lsh}
\; := \;  \Big\{\, x \in \gerC_q \;\Big\vert\; \delta_n(x) \in {(q-1)}^n
\, {U_q}^{\otimes (n-1)} \! \otimes \gerC_q \, , \; \forall\; n \in \N_+
\,\Big\} } $
\end{itemize}
\end{definition}

\smallskip

Let us remark that the following inclusions hold directly by definitions:
\begin{equation}
\label{easy-incl}
 (i) \quad  {\calI_q}^{\!\curlyvee} \supseteq \calI_q \; ,  \!\!\qquad
(ii) \quad {\calC_q}^{\!\!\triangledown} \supseteq \calC_q \; ,
\!\!\qquad  (iii) \quad  {\gerI_q}^{\! !} \subseteq \gerI_q \; ,
\!\!\qquad  (iv) \quad  {\gerC_q}^{\!\!\Lsh} \subseteq \gerC_q \; .
\end{equation}

\bigskip

\section{Duality maps}

In the present section we will prove properties of the four Drinfeld--type maps defined in the previous section, namely the maps  $ \, \calI_q \mapsto {\calI_q}^{\!\curlyvee} \, $,  $ \, \calC_q \mapsto {\calC_q}^{\!\!\triangledown} \, $,
$ \, \gerI_q \mapsto {\gerI_q}^{\! !} \, $  and  $ \, \gerC_q \mapsto {\gerC_q}^{\!\!\Lsh} \, $.
Let us recall that such maps do not  change, as we will see, the algebraic properties of subobjects, but interchanges quantized function algebra with quantum enveloping algebra and therefore quantizations of coisotropic subgroups will be sent to quantizations of (embeddable) homogeneous spaces   --- of the dual quantum group ---   and viceversa.
                                                                           \par
   Let us start by considering the map  $ \, \calI_q \mapsto {\calI_q}^{\!\curlyvee} \, $.

\smallskip

\begin{proposition}\label{curlyvee} Let  $ \, \calI_q = \calI_q(K) \, $  be
a left ideal and two-sided coideal in $F_q[G]\,$,  that is a weak quantization (of type  $ \calI \, $)  of some subgroup $ K $  of  $ G \, $.  Then
\begin{enumerate}
  \item  \;  $ {\calI_q}^{\!\curlyvee}$ is a left ideal and two-sided coideal in  ${F_q[G]}^\vee$;
  \item  \;  if  $ \, \calI_q $  is  {\sl strict},  then  $ {\calI_q}^{\!\curlyvee} $  is strict too, i.e.~$ \;\; {\calI_q}^{\!\curlyvee} \! \bigcap \, (q-1) \,
{F_q[G]}^\vee \, = \; (q-1) \, {\calI_q}^{\!\curlyvee} \;\, $;
  \item  \;  there exists a coisotropic subgroup $L$ of $G^*$ such that  $ {\calI_q(K)}^{\!\curlyvee} \! = \gerI_q(L) \; $:  namely,  $ {\calI_q(K)}^{\!\curlyvee} $  is a  {\sl proper}  quantization, of type  $ \gerI $,  of some coisotropic subgroup  $ L $  of  $ \, G^* \, $;
  \item  \;  in the  {\sl real case},  i.e.~if the quantization  $ \calI_q $  is a real one,  $ {\calI_q}^{\!\curlyvee} $  is real too, i.e.~$ \, {\big( S \big(
{\calI_q}^{\!\curlyvee} \big) \big)}^* \! = {\calI_q}^{\!\curlyvee}\,$.  Therefore claims (1--3) still hold in the framework of  {\sl real}  quantum subgroups.
\end{enumerate}
\end{proposition}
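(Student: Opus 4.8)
The plan is to establish the four claims essentially in order, since each relies on the previous ones, and to exploit throughout the structure formulas \eqref{deltaone}--\eqref{deltatwo} for $\delta_\Phi$ together with the identity $I = (q-1)\cdot 1 + J$ and the definition $H^\vee = \sum_{n\ge 0}(q-1)^{-n}J^n$.

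\medskip

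\noindent\emph{Claim (1): ${\calI_q}^{\!\curlyvee}$ is a left ideal and two-sided coideal in ${F_q[G]}^\vee$.} First I would check that ${\calI_q}^{\!\curlyvee}$ lies inside ${F_q[G]}^\vee$: each summand $(q-1)^{-n}J^{n-1}\calI_q$ is contained in $(q-1)^{-n}J^{n-1}\cdot J \subseteq (q-1)^{-n}J^n$ once one observes $\calI_q\subseteq J$ (because $\calI_q\subseteq\Ker\epsilon$, since $\calI_q$ specializes onto $\calI\subseteq\Ker\bar\epsilon$ — this should be noted explicitly, or $\calI_q$ replaced by $\calI_q\cap J$ which has the same effect). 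For the left-ideal property I would show $F_q[G]^\vee\cdot{\calI_q}^{\!\curlyvee}\subseteq{\calI_q}^{\!\curlyvee}$: a typical generator of $F_q[G]^\vee$ is $(q-1)^{-m}j$ with $j\in J^m$, and $(q-1)^{-m}j\cdot(q-1)^{-n}J^{n-1}\calI_q\subseteq (q-1)^{-(m+n)}J^{m+n-1}\calI_q$, using that $J$ is an ideal of $F_q[G]$ and $\calI_q$ is a left ideal; one must also absorb arbitrary elements of $F_q[G]^\vee$, not just these generators, but $F_q[G]^\vee$ is generated as an algebra by $(q-1)^{-1}J$, so this suffices. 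For the coideal property I would use that $\Delta(\calI_q)\subseteq \calI_q\otimes F_q[G] + F_q[G]\otimes\calI_q$ (two-sided coideal) and that $\Delta(J)\subseteq J\otimes F_q[G]+F_q[G]\otimes J$, then track denominators: $\Delta\big((q-1)^{-n}J^{n-1}\calI_q\big)$ expands, by the Leibniz-type expansion of $\Delta$ on products, into sums of terms each of which, after distributing the $n$ factors of $(q-1)^{-1}$ among the two tensor legs, lands in ${\calI_q}^{\!\curlyvee}\otimes F_q[G]^\vee + F_q[G]^\vee\otimes{\calI_q}^{\!\curlyvee}$. This bookkeeping is the routine but slightly delicate part of (1).

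\medskip

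\noindent\emph{Claim (2): strictness is preserved.} Assuming $\calI_q\cap(q-1)F_q[G]=(q-1)\calI_q$, I want ${\calI_q}^{\!\curlyvee}\cap(q-1)F_q[G]^\vee=(q-1){\calI_q}^{\!\curlyvee}$; the inclusion $\supseteq$ is trivial, so the content is $\subseteq$. Take $\xi=\sum_{n\ge 1}(q-1)^{-n}j_{n-1}a_n\in{\calI_q}^{\!\curlyvee}$ with $j_{n-1}\in J^{n-1}$, $a_n\in\calI_q$, and suppose $\xi\in(q-1)F_q[G]^\vee$. The argument I expect to run is: compare the two descriptions of ${\calI_q}^{\!\curlyvee}$ as a submodule of $\C(q)\otimes F_q[G]$, and use the strictness of $\calI_q$ together with the strictness of $F_q[G]^\vee$ itself (which is part of Theorem \ref{GQDP}, since $F_q[G]^\vee$ is a QUEA and QUEA's satisfy $H^\vee\cap(q-1)\C(q)\otimes H = (q-1)H^\vee$ — more precisely one uses that $(F_q[G]^\vee)_1$ is $U(\gerg^*)$, a domain, and that the filtration by powers of $(q-1)^{-1}J$ is ``clean''). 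Concretely I would argue that if $\xi$ were divisible by $(q-1)$ in $F_q[G]^\vee$ but not in ${\calI_q}^{\!\curlyvee}$, then reducing mod $(q-1)$ inside $F_q[G]^\vee$ would give a nonzero element of $(F_q[G]^\vee)_1=U(\gerg^*)$ that is killed, contradicting injectivity of the induced map ${\calI_q}^{\!\curlyvee}/(q-1){\calI_q}^{\!\curlyvee}\hookrightarrow (F_q[G]^\vee)_1$; the latter injectivity is exactly what has to be proved, so in practice one runs an induction on the minimal power of $(q-1)^{-1}$ appearing, peeling off the leading term and invoking the strictness hypothesis on $\calI_q$ at each stage.

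\medskip

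\noindent\emph{Claims (3) and (4).} For (3), by (1) we already know ${\calI_q}^{\!\curlyvee}$ is a left ideal and two-sided coideal in the QUEA $F_q[G]^\vee$, so once we know it is a \emph{weak} quantization of type $\gerI$ of \emph{some} subgroup $L$ of $G^*$ — i.e.\ that its specialization ${\calI_1}^{\!\curlyvee}$ is of the form $\gerI(L)=U(\gerg^*)\cdot\gerl$ for a Lie subalgebra $\gerl$ — Proposition \ref{coisotropic creed} will force $L$ to be coisotropic \emph{provided we check properness}, i.e.\ $\nabla\big({\calI_q}^{\!\curlyvee}\big)\subseteq(q-1)\,F_q[G]^\vee\wedge{\calI_q}^{\!\curlyvee}$. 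I would prove properness by the same device as in Lemma \ref{strictisproper}: $\nabla(F_q[G]^\vee)\subseteq(q-1)(F_q[G]^\vee)^{\wedge 2}$ because $F_q[G]^\vee$ is a QUEA (co-Poisson structure), combined with $\nabla({\calI_q}^{\!\curlyvee})\subseteq F_q[G]^\vee\wedge{\calI_q}^{\!\curlyvee}$ from the coideal property established in (1); intersecting gives the claim. To identify that ${\calI_1}^{\!\curlyvee}$ has the form $\gerI(L)$ and to pin down $L$ I would compute the semiclassical limit explicitly, comparing with the original GQDP (Theorem \ref{GQDP}(c)): the leading-order part of $(q-1)^{-1}\calI_q$ mod $(q-1)$ produces a subspace of $\gerg^*$ whose Lie-algebra-and-ideal-closure is $\gerl$, and chasing through the definition of $I$ one recovers $\gerl=\gerk^\bot$-generated appropriately. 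Finally (4) is formal: the $*$-structure is continuous for the $(q-1)$-adic-type operations, $S$ and $*$ interact with $I$, $J$ and with the product/coproduct in the standard way, so $\big(S({\calI_q}^{\!\curlyvee})\big)^*={\calI_q}^{\!\curlyvee}$ follows termwise from $\big(S(\calI_q)\big)^*=\calI_q$ and the corresponding identity $\big(S(J)\big)^*=J$ — and since claims (1)--(3) are about structures all of which are respected by $*$, they persist in the real setting.

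\medskip

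\noindent\textbf{Main obstacle.} I expect the hard part to be claim (2) (strictness), and within claim (3) the precise identification of the semiclassical limit as a genuine $\gerI(L)$ with $L$ connected: everything else is a matter of carefully propagating ideal/coideal properties through the geometric-series definition of $(\cdot)^\vee$ and $(\cdot)^{\curlyvee}$, whereas the ``no torsion is created in the kernel of specialization'' statement genuinely requires the input strictness hypothesis and an induction on $(q-1)$-adic order, and recognizing the limit object requires matching against the original GQDP computation rather than pure formalism.
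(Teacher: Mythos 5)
Your claims (1) and (4) are fine and essentially match the paper, but there are two genuine gaps in the remaining parts. First, your properness argument in claim (3) is broken in exactly the case it must cover. You propose to intersect $\nabla\big({\calI_q}^{\!\curlyvee}\big)\subseteq {F_q}^{\!\vee}\wedge{\calI_q}^{\!\curlyvee}$ with $\nabla\big({F_q}^{\!\vee}\big)\subseteq (q-1)\,\big({F_q}^{\!\vee}\big)^{\wedge 2}$ and conclude $\nabla\big({\calI_q}^{\!\curlyvee}\big)\subseteq (q-1)\,{F_q}^{\!\vee}\wedge{\calI_q}^{\!\curlyvee}$; but the step $\big({F_q}^{\!\vee}\wedge{\calI_q}^{\!\curlyvee}\big)\cap(q-1)\big({F_q}^{\!\vee}\big)^{\wedge 2}\subseteq(q-1)\,{F_q}^{\!\vee}\wedge{\calI_q}^{\!\curlyvee}$ is precisely the kind of saturation identity that needs strictness of ${\calI_q}^{\!\curlyvee}$ in ${F_q}^{\!\vee}$ (this is how the analogous intersection is justified in Lemma \ref{strictisproper}), and in claim (3) the input $\calI_q$ is only a \emph{weak} quantization, so ${\calI_q}^{\!\curlyvee}$ is not known to be strict --- if it were automatic, claim (2) would be vacuous. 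The paper avoids this by working with the generators: from \eqref{coideal} one gets $\nabla\big((q-1)^{-1}\calI_q\big)\subseteq(q-1)^{-1}\big(F_q\wedge\calI_q\big)$, and then the needed power of $(q-1)$ is gained directly from $J\subseteq(q-1)\,{F_q}^{\!\vee}$ and $\calI_q\subseteq(q-1)\,{\calI_q}^{\!\curlyvee}$, with no intersection-with-$(q-1)$-multiples step at all.

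Second, your treatment of claim (2) never supplies the actual mechanism. You correctly flag that the ``reduce mod $(q-1)$'' formulation is circular, but the proposed repair (``induction on the minimal power of $(q-1)^{-1}$, peeling off the leading term'') is not an argument: the whole difficulty is to compute an intersection of the form $\big(J^{\,n-1}\calI_q\big)\cap(q-1)\,J^{\,n}$, and peeling terms does not by itself control how ${\calI_q}^{\!\curlyvee}$ sits inside ${F_q}^{\!\vee}$. The paper's proof rests on a concrete structural tool you do not mention: choose a local system of parameters $y_1,\dots,y_n$ at $e$ \emph{adapted to} $K$ (so that $y_{k+1},\dots,y_n\in\calI(K)$) and lift it to $j_1,\dots,j_n\in J$ with $j_{k+1},\dots,j_n\in\calI_q$; the ordered monomials $j^{\,\underline{e}}$ then form a $\Cqqm$--pseudobasis of the $J$--adic completion of $F_q$ (\cite{Ga3}, Lemma 4.1), and it is this pseudobasis that yields $\big(J^{\,n-1}\calI_q\big)\cap(q-1)\,J^{\,n}\subseteq J^{\,n-1}\big(\calI_q\cap(q-1)\,J\big)$, after which the strictness hypothesis $\calI_q\cap(q-1)\,F_q=(q-1)\,\calI_q$ finishes the proof. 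Without identifying something playing this role, your plan for (2) does not close. (A minor further point: in (3) you do not need to compute the semiclassical limit explicitly or identify $L$ as built from $\gerk^\perp$ --- that identification is the content of Theorem \ref{complimentary}(a); for the proposition it suffices that the specialization is a left ideal and two-sided coideal of $U(\gerg^*)$, hence of the form $\gerI(L)$, and that properness plus Proposition \ref{coisotropic creed} forces $L$ coisotropic.)
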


\begin{proof}
 {\it (1)} \,  Consider that   $ \, {\calI_q}^{\!\curlyvee} \, $
is  the left ideal of  $ {F_q}^{\!\vee} $
generated by  $ \, {(q-1)}^{-1} \, \calI_q \; $;   therefore, in
order to prove  $ \, {\calI_q}^{\!\curlyvee} \coideal\, {F_q}^{\!\vee}
\, $  it is enough to show that  $ \, \Delta \big( {(q\!-\!1)}^{-1} \,
\calI_q \big) \, \subseteq \, {F_q}^{\!\vee} \!\otimes {\calI_q}^{\!
\curlyvee} + \, {\calI_q}^{\!\curlyvee} \!\otimes {F_q}^{\!\vee} \, $.  Since  $ \calI_q $  is a coideal of  $ F_q \, $, we have
\begin{equation}\label{coideal}
  \Delta \big(\! {(q\!-\!1)}^{-1} \, \calI_q \,\big) \, \subseteq \,
F_q \otimes {(q\!-\!1)}^{-1} \, \calI_q + {(q\!-\!1)}^{-1} \, \calI_q
\otimes F_q \, \subseteq \, {F_q}^{\!\vee} \!\otimes {\calI_q}^{\!
\curlyvee} + {\calI_q}^{\!\curlyvee} \!\otimes {F_q}^{\!\vee}
\end{equation}
whence  $ \, {\calI_q}^{\!\curlyvee} \coideal\, {F_q}^{\!\vee} \, $
follows, and the first  claim is proved.
 \vskip5pt
   {\it (2)} \,  Assume $ \calI_q $ to  be a  {\sl strict}  quantization, so that  $ \, \calI_q \bigcap \, (q-1) \, F_q \, = \,(q-1) \, \calI_q \; $.
                                          \par
   Let  $\, J := \Ker\, \big(\epsilon \, \colon \, F_q
\longrightarrow \Cqqm \,\big) \; $.  Then
\[
J \! \mod (q \!- \! 1) F_q = \Ker\,(\epsilon){\big|}_{F[G]}  \; = \;  \mathfrak{m}_e
\]
 and  $\mathfrak{m}_e\Big/ {\mathfrak{m}_e}^{\!2} = \gerg^*$,
 the cotangent Lie bialgebra of  $ G \, $.  Let  $ \, \{y_1,\dots,y_n\}$  be a subset of
$ \mathfrak{m}_e $  whose image in the local ring of  $ G $  at the identity $ e$
is a local system of parameters, and pull it back to a subset  $\{j_1,\dots,j_n\}$
of  $J$.  Let  $ \widehat{F}_q $  be the  $ J $--adic  completion
of  $ F_q$.  From \cite{Ga3}, Lemma 4.1, we know that the set of ordered
monomials  $\big\{ j^{\,\underline{e}} \;\big|\; \underline{e} \in
\N^{\,n} \big\}$  (where hereafter  $ \, j^{\,\underline{e}} :=
\prod_{s=1}^n j_s^{\,\underline{e}(i)} \, $,  \, for all  $ \,
\underline{e} \in \N^{\,n} \, $)  is a  $ \Cqqm $--pseudobasis
 of  $ \widehat{F}_q \, $,
which means that each element of  $ \widehat{F}_q $  has a unique
expansion as a formal infinite linear combination of the  $ j^{\,
\underline{e}} $'s.  In a similar way, the  $ (q-1) $--adic
completion of  $ {F_q}^{\!\vee} $  admits  $ \, \big\{ {(q -
1)}^{-|\underline{e}|} j^{\,\underline{e}} \;\big|\; \underline{e}
\in \N^{\,n} \big\} \, $  as a  $ \Cqqm $--pseudobasis,  where
$ \, |\underline{e}| := \sum_{i=1}^n \underline{e}(i) \, $.
                                                \par
   For our purposes we need a special choice of the set  $ \{j_1,\dots, j_n\}$ adapted to the smooth subvariety
$K$  of  $ G $.  By general theory we can choose  $ \{y_1, \dots, y_n\} $  so that  $ \; y_1 $,  $ \dots $,
$ y_k \in \mathfrak{m}_e \; $  and  $ \; y_{k+1} , \dots , y_n \in
\calI(K) \, $,   where  $ \, k = \text{\it dim}\,(K) \, $.
We can also choose the lift  $ \{j_1, \dots, j_n\} $  of  $ \{y_1,
\dots, y_n\} $  inside  $ J $  so that  $ j_s $  is a lift of
$ y_s $,  for all  $ \, s = 1, \dots, k \, $,  \, and  $ \,
j_{k+1}, \dots, j_n \in \calI_q \, $.  With these assumptions,
 it's easy to see that
  $$  \varphi\in {\calI_q}^{\!\curlyvee} \,{\textstyle \bigcap}\, (q-1) \, {F_q}^{\!\vee}  \;\;\Longrightarrow\;\;
{(q-1)}^n \, \varphi \in \big( J^{n-1} \cdot \calI_q\big) \,{\textstyle \bigcap}\, (q-1) \, J^n  $$
\noindent  for some  $n \in \N $, which in turn yields  $ \; {(q-1)}^n \varphi \in J^{n-1} \cdot
\big( \calI_q \, \bigcap \, (q-1) \, J \big) \, $. Since
  $$  \calI_q \,{\textstyle \bigcap}\, (q-1) \, J \; \subseteq \; \calI_q \,{\textstyle \bigcap}\, (q-1) \, F_q  \; = \; (q-1) \, \calI_q  $$
we conclude that  $ \; {(q-1)}^n \, \varphi \in (q-1) \, J^{n-1} \cdot \calI_q \, $,
\, whence  $ \; \varphi \in (q-1) \, {\calI_q}^{\!\curlyvee} \, $.
The converse inclusion  $ \, {\calI_q}^{\!\curlyvee} \bigcap \,
(q-1) \, {F_q}^\vee \, \supseteq \, (q-1) \, {\calI_q}^{\!\curlyvee} \, $  is obvious, hence claim  {\it (2)\/}  is proved.
 \vskip5pt
   {\it (3)} \,  It is an obvious statement that  $ {\calI_q}^{\!\curlyvee} $  is a weak quantization of its image  $ \, \pi_{{F_q}^{\!\vee}} \big( {\calI_q}^{\!\curlyvee} \big) \, $:  in particular,
$ \pi_{{F_q}^{\!\vee}} \big( {\calI_q}^{\!\curlyvee} \big) \ideal_\ell \! \coideal \; \pi_{{F_q}^{\!\!\vee}} \big( {F_q}^{\!\vee} \big) = U\big(\gerg^*\big) \, $  implies that
$\pi_{{F_q}^{\!\vee}} \big( {\calI_q}^{\!\curlyvee} \big) = \gerI(L) \, $  for some subgroup  $L$  of  $G^* $.  Thus  $ {\calI_q}^{\!\curlyvee} $  is a weak quantization, to be called  $ \gerI_q(L) $, of  $ \gerI(L) \, $, and it is even strict if  $ \calI_q $  itself is strict, as we've just seen. Now we show that such quantization  $ \gerI_q(L) $  turns out to be always  {\sl proper}.
                                                 \par
   In fact, (\ref{coideal})  implies $\nabla \big( {(q-1)}^{-1} \, \calI_q \big) \, \subseteq \, {(q-1)}^{-1} \, \big( F_q \wedge \calI_q \big) \, $.  On the other hand  $F_q \wedge \calI_q \, \subseteq \, J \wedge \calI_q \, \subseteq \,
{(q-1)}^2 \, {F_q}^\vee \!\wedge {\calI_q}^{\!\curlyvee}$,   thus, finally,
$\nabla\big({\calI_q}^{\!\curlyvee}\big) \in (q-1) \, {F_q}^\vee \!\wedge {\calI_q}^{\!\curlyvee}$,  which means that  $ {\calI_q}^{\!\curlyvee} $  is proper and  {\it (3)\/}  holds.
 \vskip5pt
   {\it (4)} \,  This is an obvious consequence of definitions.
\end{proof}

\smallskip

\begin{remark}
 In functorial language we may say that the map
$\calI_q \mapsto {\calI_q}^{\!\curlyvee}$ establishes a functor between quantizations of coisotropic subgroups of $G$ and quantizations of (embeddable) homogeneous spaces of $G^*$, moving from a global to a local description,  sending each type of quantization in a proper one and preserving strictness.  Indeed, we should make precise what are the ``arrows'' in our categories of ``quantum subgroups'' or ``quantum homogeneous spaces'', and how the functor acts on these: we leave these details to the interested reader.
\end{remark}

\medskip

   Let us move on to properties of the map  $ \; \calC_q \mapsto
{\calC_q}^{\!\!\triangledown} \; $.

\smallskip

\begin{proposition}\label{triangledown} Let  $ \, \calC_q = \calC_q(K) \, $  be a left coideal subalgebra in $F_q[G]$.
  Then
\begin{enumerate}
  \item  \;  $ {\calC_q}^{\!\!\triangledown}$ is a left coideal subalgebra in ${F_q[G]}^\vee$;
  \item  \;  if  $ \, \calC_q $  is  {\sl strict},  then  $ {\calC_q}^{\!\!\triangledown} $  is strict too,  i.e.~$ \,\; {\calC_q}^{\!\!\triangledown} \bigcap \, (q-1) \, {F_q[G]}^\vee \, = \; (q-1) \, {\calC_q}^{\!\!\triangledown} \;\, $.
  \item  \;  there exists a coisotropic subgroup $L$ of $G^*$ such that $ {\calC_q(K)}^{\!\triangledown} \! = \gerC_q(L) \, $:  namely,  $ {\calC_q(K)}^{\!\triangledown} $  is a  {\sl proper}  quantization, of type $ \gerC \, $, of some coisotropic subgroup  $ L $  of  $ \, G^* \, $;
  \item  \;  in the  {\sl real case},  i.e.~if the quantization  $ \calC_q $  is a real one,  $ {\calC_q(K)}^{\!\triangledown} $  is real too,  i.e.~$ \, {\big( {\calC_q}^{\!\!\triangledown} \big)}^* \! = {\calC_q}^{\!\!
\triangledown} \, $.  Therefore claims (1--3) still hold in the framework of  {\sl real} quantum subgroups.
\end{enumerate}
\end{proposition}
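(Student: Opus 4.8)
The plan is to follow, step by step, the proof of Proposition~\ref{curlyvee}: the operation $\calC_q\mapsto{\calC_q}^{\!\!\triangledown}$ is to left coideal subalgebras of $F_q[G]$ exactly what $\calI_q\mapsto{\calI_q}^{\!\curlyvee}$ is to left ideals and two-sided coideals, and both feed off the same Drinfeld functor $H\mapsto H^\vee$ of Theorem~\ref{GQDP}. I would therefore treat items (1)--(4) in turn, importing the bookkeeping of Proposition~\ref{curlyvee} wherever it transfers verbatim.

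For claim~(1), I would first record that ${\calC_q}^{\!\!\triangledown}$ is the $\Cqqm$--subalgebra of $\F_q$ generated by $(q-1)^{-1}(\calC_q\cap J)$: it is unital (the $n=0$ summand is $\Cqqm\cdot 1$) and it lies inside ${F_q}^\vee$ since $(q-1)^{-n}(\calC_q\cap J)^n\subseteq(q-1)^{-n}J^n\subseteq{F_q}^\vee$ by \eqref{check2}. For the left coideal property I would exploit the $\Cqqm$--module splitting $F_q=\Cqqm\cdot 1\oplus J$ attached to $\epsilon$: tensoring with $\calC_q$ and composing with $\epsilon\otimes\id$ turns the condition $\Delta(\calC_q)\subseteq F_q\otimes\calC_q$ into $\Delta(c)-1\otimes c\in J\otimes\calC_q$ for every $c\in\calC_q\cap J$; dividing by $q-1$ shows $\Delta\big((q-1)^{-1}c\big)\in{F_q}^\vee\otimes{\calC_q}^{\!\!\triangledown}$, and since $\Delta$ is an algebra map and ${F_q}^\vee\otimes{\calC_q}^{\!\!\triangledown}$ is a subalgebra of ${F_q}^\vee\otimes{F_q}^\vee$, this spreads from these generators to all of ${\calC_q}^{\!\!\triangledown}$; hence ${\calC_q}^{\!\!\triangledown}\leq^1\coideal_\ell{F_q}^\vee$.

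The step I expect to be the main obstacle is claim~(2). Here I would adapt the pseudobasis argument of \cite{Ga3}, Lemma~4.1 --- the same one used in Proposition~\ref{curlyvee}(2) --- but choosing the local system of parameters $\{y_1,\dots,y_n\}$ of $G$ at $e$ so that the first $n-k$ of them ($k:=\dim K$) lie in $\calC\cap\mathfrak{m}_e$ and form a local system of parameters at the base point of the $(n-k)$--dimensional homogeneous space $G/K$, completing them to a system of parameters of $G$; one then lifts $y_1,\dots,y_{n-k}$ to $j_1,\dots,j_{n-k}\in\calC_q\cap J$ (possible since $\pi_{F_q}(\calC_q)=\calC$) and $y_{n-k+1},\dots,y_n$ to $j_{n-k+1},\dots,j_n\in J$. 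The crux is that strictness of $\calC_q$ forces the $J$--adic completion of $\calC_q$ to admit the ordered monomials $j^{\,\underline e}$ with $\mathrm{supp}(\underline e)\subseteq\{1,\dots,n-k\}$ as a $\Cqqm$--pseudobasis; granting this, the $(q-1)$--adic completion of ${\calC_q}^{\!\!\triangledown}$ has $\big\{(q-1)^{-|\underline e|}j^{\,\underline e}:\mathrm{supp}(\underline e)\subseteq\{1,\dots,n-k\}\big\}$ as a $\Cqqm$--pseudobasis, and the equality ${\calC_q}^{\!\!\triangledown}\cap(q-1){F_q}^\vee=(q-1){\calC_q}^{\!\!\triangledown}$ then drops out exactly as in Proposition~\ref{curlyvee}(2). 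Establishing that the strictness hypothesis really yields this ``adapted'' pseudobasis of the completion of $\calC_q$ --- i.e.\ that, after completing, $\calC_q\cap J$ is freely topologically generated over $\Cqqm$ by $j_1,\dots,j_{n-k}$ --- is the delicate point, and the remainder of (2) is formal.

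For claims~(3) and~(4) I would argue as in Proposition~\ref{curlyvee}(3--4). First, ${\calC_q}^{\!\!\triangledown}$ is trivially a weak quantization of its own specialization, and by~(1) the image $\pi_{{F_q}^\vee}\big({\calC_q}^{\!\!\triangledown}\big)$ is a left coideal subalgebra of ${F_q}^\vee\big/(q-1){F_q}^\vee=U(\gerg^*)$; because $U(\gerg^*)$ is cocommutative and connected, this left coideal subalgebra is automatically a sub-bialgebra, hence equals $U(\gerl)$ for a Lie subalgebra $\gerl\subseteq\gerg^*$, so that ${\calC_q}^{\!\!\triangledown}=\gerC_q(L)$ is a weak quantization of type $\gerC$ of the connected subgroup $L$ of $G^*$ with $\mathrm{Lie}(L)=\gerl$. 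To upgrade ``weak'' to ``proper'' I would check $\nabla\big({\calC_q}^{\!\!\triangledown}\big)\subseteq(q-1)\,{F_q}^\vee\wedge{\calC_q}^{\!\!\triangledown}$: the left coideal property of $\calC_q$ together with the automatic antisymmetry of $\nabla$ gives $\nabla\big((q-1)^{-1}(\calC_q\cap J)\big)\subseteq(q-1)^{-1}\big(J\wedge(\calC_q\cap J)\big)$, and then $J\subseteq(q-1){F_q}^\vee$, $\calC_q\cap J\subseteq(q-1){\calC_q}^{\!\!\triangledown}$, together with $\nabla\big({F_q}^\vee\big)\subseteq(q-1)\big({F_q}^\vee\big)^{\wedge2}$ (valid since ${F_q}^\vee$ is a QUEA, by Theorem~\ref{GQDP}) and a Leibniz estimate as in the proof of Lemma~\ref{strictisproper}, yield the claim; coisotropy of $L$ then follows from Proposition~\ref{coisotropic creed}, applied with $G^*$ and ${F_q}^\vee$ in the roles of $G$ and $U_q(\gerg)$. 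Finally, for~(4): if $\calC_q^\star=\calC_q$, then $J^\star=J$ (as $\epsilon$ is a $\star$--map) and $(q-1)^\star=q-1$, so anti-multiplicativity of $\star$ gives $(\calC_q\cap J)^\star=\calC_q\cap J$, whence $\big((\calC_q\cap J)^n\big)^\star=(\calC_q\cap J)^n$ and $\big({\calC_q}^{\!\!\triangledown}\big)^\star={\calC_q}^{\!\!\triangledown}$; thus (1)--(3) persist for real quantum subgroups. In short, the only genuinely new input beyond Proposition~\ref{curlyvee} is the adapted-pseudobasis step feeding claim~(2).
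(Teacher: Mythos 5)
Your route is the same as the paper's: the published proof of Proposition \ref{triangledown} explicitly recycles the arguments of Proposition \ref{curlyvee}, proving claim \textit{(1)} by the same splitting computation you give, claim \textit{(3)} by the same ``weak quantization of the image, plus a $\nabla$--estimate on the generators $(q-1)^{-1}(\calC_q\cap J)$'' argument (coisotropy of $L$ then coming from the properness, as you say), and claim \textit{(4)} as immediate from the definitions; your extra remarks (cocommutativity/Milnor--Moore to see that the image is $U(\gerl)$, the Leibniz spreading of the $\nabla$--bound from the generators to the whole subalgebra) only make explicit what the paper leaves implicit. You have also located the crux of \textit{(2)} exactly where the paper puts it: the paper asserts, invoking \cite{Ga3}, Lemma 4.1 together with strictness of $\calC_q$, that the $(q-1)$--adic completion of ${\calC_q}^{\!\!\triangledown}$ admits the monomials in the adapted parameters as a $\Cqqm$--pseudobasis, at essentially the same level of detail at which you ``grant'' that step.

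The one concrete gap is at the start of your argument for \textit{(2)}: you choose $n-k$ elements of $\calC\cap\mathfrak{m}_e$ (with $k:=\dim K$) forming a local system of parameters of $G/K$ at the base point, but this is impossible when $K$ is not observable --- e.g.\ if $G/K$ is projective then $\calC=\C$ and there are no such invariant functions at all, and in general the number of independent parameters available inside $\calC\cap\mathfrak{m}_e$ is $n-\dim\widehat{K}$, not $n-\dim K$. The paper first uses $\calC(K)=\calC\big(\widehat{K}\big)$ to replace $K$ by its observable hull, and only then makes the adapted choice of $\{j_1,\dots,j_n\}$; you need the same reduction (with $k$ redefined as $\dim\widehat{K}$) before your construction makes sense. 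A minor further point: in \textit{(4)} the identity $(q-1)^\star=q-1$ need not hold (the paper's real examples have $q^\star=q^{-1}$, hence $(q-1)^\star=-q^{-1}(q-1)$); since $(q-1)^\star$ is in any case an associate of $q-1$ in $\Cqqm$, your conclusion $\big({\calC_q}^{\!\!\triangledown}\big)^\star={\calC_q}^{\!\!\triangledown}$ still stands, but the intermediate claim should be phrased accordingly.
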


\begin{proof}
 The proof uses essentially the same arguments as the previous one.
 \vskip5pt
   {\it (1)} \,  By the very definitions  $ \; {\calC_q}^{\!\!\triangledown}
\! \leq^1 \! {F_q}^{\!\vee} := {F_q[G]}^\vee \, $.  More precisely,  $ \,
{\calC_q}^{\!\!\triangledown} \, $  is (by construction) the unital
$ \Cqqm $--subalgebra  of  $ {F_q}^{\!\vee} $  generated by  $ \,
{(q-1)}^{-1} \, {(\calC_q)}^+ \, $,  \, where  $ \, {(\calC_q)}^+
:= \calC_q \bigcap\, J \, $.  So to get  $ \, {\calC_q}^{\!\!
\triangledown} \coideal_\ell \, {F_q}^{\!\vee} \, $  we must
only prove  $ \, \Delta \big( {(q\!-\!1)}^{-1} {(\calC_q)}^+
\big) \! \subseteq {F_q}^{\!\vee} \!\otimes {\calC_q}^{\!\!\triangledown}
\, $.  But  $ \, \calC_q \, \coideal_\ell \, F_q \, $,  so:
\begin{equation}\label{coideal2}
\Delta \big( {(q-1)}^{-1} {(\calC_q)}^+ \big) \, \subseteq \, F_q
\otimes {(q-1)}^{-1} {(\calC_q)}^+ \, \subseteq \, {F_q}^{\!\vee}
\!\otimes {\calC_q}^{\!\!\triangledown}
\end{equation}
therefore  $ \,{\calC_q}^{\!\!\triangledown} \, \coideal_\ell \, {F_q}^{\!\vee}$, and claim  {\it (1)\/}  is proved.
 \vskip5pt
   {\it (2)} \,  Now suppose $ \calC_q $ to  be a  {\sl strict\/}
quantization, i.e.  $ \; \calC_q \bigcap \, (q-1) \, F_q \, =
\, (q-1) \, \calC_q \; $.  We need an explicit description of  $ {F_q}^{\!\vee} $
and of  $ {\calC_q}^{\!\!\triangledown} \, $.  This goes along the
same lines followed to describe  $ {\calI_q}^{\!\curlyvee} $  in the
proof of Proposition \ref{curlyvee}: but now the choice of the subset  $ \{ j_1,
\dots, j_n\} $  of  $ J $  is different.
                                               \par
   First, since  $ \, \calC(K) = \calC\big(\widehat{K}\big) \, $
we can assume that  $ \, K = \widehat{K} \, $,  \, i.e.~$ K $  is
observable.  Then we can choose  $ \big\{ j_1, \dots, j_n \big\} $
so that  $ \, j_{k+1}, \dots, j_n \in J \, \bigcap \, \calC_q =
{\calC_q}^{\!+} \, $  (where again  $ \, k = \text{\it dim}\,(K)
\, $)  and, letting  $ \; y_s := j_s \! \mod (q-1) \, F_q \; $,
 the set  $ \big\{ y_1, \dots, y_n \big\} $  yields a local system
of parameters at  $ \, e \in G \, $  (in the localized ring), as
before; now in addition we have  $ \, y_{k+1}, \dots, y_n \in
\mathfrak{m}_e \, \bigcap \, \calC(K) =: {\calC(K)}^+ \, $.  With
these assumptions, the  $ (q-1) $--adic  completion of
$ {F_q}^{\!\vee} $  admits  $ \, \big\{ {(q-1)}^{-|\underline{e}|}
j^{\,\underline{e}} \;\big|\; \underline{e} \in \N^{\,n} \big\} \, $
as a  $ \Cqqm $--pseudobasis,  like before, but in addition the
same analysis can be done for the  $ (q-1) $--adic  completion
of  $ {\calC_q}^{\!\!\triangledown} $  (just because  $ \calC_q $
is  {\sl strict\/}),  which then has  $ \Cqqm $--pseudobasis  $ \,
\big\{ \prod_{s=k+1}^n j_s^{\,e_s} \,\big|\, (e_{k+1},\dots,e_n) \in
\N^{\,n-k} \big\} \, $.  From these description of the completions,
and comparing the former with  $ {F_q}^{\!\vee} $  and  $ \calC_q
\, $,  we easily see that  $ \; {\calC_q}^{\!\!\triangledown}
\, \bigcap \, (q-1) \, {F_q}^\vee \, \subseteq \, (q-1) \,
{\calC_q}^{\!\!\triangledown} \, $.  The converse is trivial,
hence claim  {\it (1)\/}  is proved.
 \vskip5pt
   {\it (3)} \, It follows directly from  {\it (1)\/}  that
$ {\calC_q}^{\!\!\triangledown} $  is a weak quantization
of its image  $ \, \pi_{{F_q}^{\!\vee}} \big( {\calC_q}^{\!\!\triangledown}
\big) \, $:  \, in particular,  $ \, \pi_{{F_q}^{\!\vee}} \big( {\calC_q}^{\!\!
\triangledown} \big) \leq^1 \! \coideal_\ell \; \pi_{{F_q}^{\!\!\vee}} \big(
{F_q}^\vee \big) = U \big( \gerg^* \big) \, $  means that  $ \, \pi_{{F_q}^{\!\vee}}
\big( {\calC_q}^{\!\!\triangledown} \big) = \gerC(L) \, $  for some subgroup
$ L$ of $G^*$.  Thus  $ {\calC_q}^{\!\!\triangledown} $  is a weak quantization   --- to be called  $ \gerC_q(L) $  ---   of  $ \gerC(L)$,  and it is even strict if  $ \calC_q $  itself is strict, by claim  {\it (1)}.  Now in addition we show that, in any case, such a quantization  $ \gerC_q(L) $  is always  {\sl proper}.
                                                 \par
From (\ref{coideal2}) we have
  $$  \nabla \big( {(q-1)}^{-1} \, {(\calC_q)}^+ \big) \, \subseteq
\, {(q-1)}^{-1} J \wedge {(\calC_q)}^+ \subseteq \, {(q-1)}^{-1+2}
\, {F_q}^\vee \!\wedge {\calC_q}^{\!\!\triangledown} \, = \, (q-1)
\, {F_q}^\vee \!\wedge {\calC_q}^{\!\!\triangledown}  $$
which implies exactly that  $ {\calC_q}^{\!\!\triangledown} $   --- which by definition is the  unital subalgebra generated by  ${(q-1)}^{-1} \, {(\calC_q)}^+$  ---   is proper.
 \vskip3pt
   {\it (4)} \, This follows directly from definitions and from
${\calC_q}^{\!*} = \, \calC_q$,  which holds
by assumption.
\end{proof}

\smallskip

\begin{remark}
 In functorial language we may say that the map $\calC_q\mapsto {\calC_q}^{\!\!\triangledown}$ establishes a functor between quantized homogeneous spaces of $G$ and quantizations of coisotropic subgroups of $G^*$, moving from a global to a local description,  sending each type of quantization in a proper one and preserving strictness. Again, to be precise, several details need to be fixed, and are left to the reader.
\end{remark}

\medskip

   The third step copes with the map  $ \, \gerI_q \mapsto
{\gerI_q}^{\! !} \; $.

\smallskip

\begin{proposition}\label{esclamativo}
 Let  $ \, \gerI_q = \gerI_q(K) \, $  be a left ideal and two-sided coideal in $U_q(\gerg)\,$,  weak quantization (of type  $ \gerI $)  of some coisotropic subgroup  $ K $ of  $G\,$.  Then:
\begin{enumerate}
\item  \;  $ \, {\gerI_q}^{\! !}$ is a left ideal and two-sided coideal in ${U_q(\gerg)}' $;
\item  \;  if  $ \, \gerI_q $  is  {\sl strict}, then  $ {\gerI_q}^{\! !} $  is strict too,  i.e.~$ \,\; {\gerI_q}^{\! !} \bigcap \, (q-1) \, {U_q(\gerg)}'
\, = \; (q-1) \, {\gerI_q}^{\! !} \;\, $;
\item  \;  there exists a coisotropic subgroup $L$ in $G^*$ such that   $ {\gerI_q(K)}^! \! = \calI_q(L) \, $:  namely,  $ {\gerI_q(K)}^! $  is a  {\sl proper}  quantization, of type  $ \calI \, $,  of some coisotropic subgroup  $ L $ of  $ \, G^* \, $;
\item  \;  in the  {\sl real case},  i.e.~if the quantization  $ \gerI_q $  is a real one,  $ {\gerI_q}^{\! !} $  is real too,  i.e.~$ \, {\big(S \big( {\gerI_q}^{\! !} \big) \big)}^* \! = {\gerI_q}^{\! !} $.  Therefore claims (1--3) still hold in the framework of  {\sl real}  quantum subgroups.
\end{enumerate}
\end{proposition}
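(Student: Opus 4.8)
The plan is to run the four claims in strict parallel with Propositions \ref{curlyvee} and \ref{triangledown} --- this proposition being their ``enveloping--algebra side'' counterpart, where the ambient Hopf algebra is $ U_q := U_q(\gerg) $, the relevant Drinfeld functor is the \emph{prime} one, and the target $ {U_q}' := {U_q(\gerg)}' $ is a QFA for $ G^* $ which, by Theorem \ref{GQDP}(a), is a Hopf subalgebra of $ U_q $ --- and, at the same time, in parallel with the corresponding formal result of \cite{CiGa}. The only genuinely new work is that the arguments must be run over the PID $ \Cqqm $ rather than over $ \C[[h]] $, for which the explicit $ (q-1) $--adic pseudobasis descriptions of \cite{Ga3} (Lemma 4.1 and its QUEA counterpart) are the basic tool; for the rest the computations rely on the $ \delta $--calculus, on the product formulas (\ref{deltaone})--(\ref{deltatwo}), and on the identity $ (\delta_m \otimes \delta_n)\circ\Delta = \delta_{m+n} $, which is immediate from the key identity $ \delta_k = (\id - \epsilon)^{\otimes k}\circ\Delta^k $ and coassociativity.

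For claim (1): first $ {\gerI_q}^{\! !} \subseteq \gerI_q \cap {U_q}' $ by the very definition, and $ \epsilon\big( {\gerI_q}^{\! !} \big) = 0 $ since $ {\gerI_q}^{\! !} \subseteq \gerI_q $ and $ \gerI_q $ is a coideal. For the left--ideal property, take $ a \in {U_q}' $ and $ x \in {\gerI_q}^{\! !} $: then $ ax \in \gerI_q $ (left ideal in $ U_q $) and, by (\ref{deltaone}), $ \delta_n(ax) = \sum_{\Lambda\cup Y=\{1,\dots,n\}} \delta_\Lambda(a)\,\delta_Y(x) $, where each summand is divisible by $ (q-1)^{|\Lambda|+|Y|} $, hence by $ (q-1)^n $, and carries in the slot of the distinguished factor of $ \delta_Y(x) $ an entry in $ U_q\cdot\gerI_q \subseteq \gerI_q $; thus $ \delta_n(ax) \in (q-1)^n \sum_s U_q^{\otimes(s-1)} \otimes \gerI_q \otimes U_q^{\otimes(n-s)} $ and $ ax \in {\gerI_q}^{\! !} $. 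For the two--sided--coideal property it remains to prove $ \Delta\big( {\gerI_q}^{\! !} \big) \subseteq {U_q}' \otimes {\gerI_q}^{\! !} + {\gerI_q}^{\! !} \otimes {U_q}' $; applying $ (\delta_m\otimes\delta_n)\circ\Delta = \delta_{m+n} $, for $ x \in {\gerI_q}^{\! !} $ and all $ m, n $ the element $ (\delta_m\otimes\delta_n)\big(\Delta(x)\big) = \delta_{m+n}(x) $ has the correct $ (q-1) $--divisibility and puts its $ \gerI_q $--factor on one of the two sides, while $ \Delta(x) $ already lies in $ {U_q}'\otimes{U_q}' $ (as $ {U_q}' $ is a Hopf subalgebra) and in $ U_q\otimes\gerI_q + \gerI_q\otimes U_q $ (as $ \gerI_q $ is a coideal); a torsion--freeness argument against the pseudobasis introduced below then lets one split this intersection as $ {U_q}'\otimes{\gerI_q}^{\! !} + {\gerI_q}^{\! !}\otimes{U_q}' $. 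This splitting is the one delicate point of (1).

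For claims (2) and (3) I would imitate the completion argument of Proposition \ref{curlyvee}(2)--(3). Set $ J := \Ker(\epsilon_{U_q}) $, so that $ J $ reduces modulo $ (q-1)U_q $ to the augmentation ideal of $ U(\gerg) $, with cotangent quotient $ \gerg $; choose a PBW--type basis of $ U(\gerg) $ adapted to $ \gerk $ and lift it to a subset $ \{j_1, j_2, \dots\}\subseteq J $ so that a distinguished sublist lies in $ \gerI_q $ and reduces --- modulo $ (q-1)U_q $ and modulo higher order terms --- to a basis of $ \gerk $. By the QUEA counterpart of \cite{Ga3}, Lemma 4.1, one then obtains explicit $ \Cqqm $--pseudobases of the $ (q-1) $--adic completions of $ U_q $, of $ {U_q}' $, and --- when $ \gerI_q $ is strict --- of $ \gerI_q $ and of $ {\gerI_q}^{\! !} $, all consisting of ordered monomials in the suitably $ (q-1) $--rescaled $ j_s $'s, those for $ {\gerI_q}^{\! !} $ being exactly the monomials that involve at least one distinguished generator. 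Comparing these pseudobases gives $ {\gerI_q}^{\! !}\cap(q-1)\,{U_q}' = (q-1)\,{\gerI_q}^{\! !} $ in the strict case (claim (2); the reverse inclusion is trivial), and identifies $ \pi_{{U_q}'}\big( {\gerI_q}^{\! !} \big) $ with the ideal of $ F[G^*] = {U_q}'\big/(q-1){U_q}' $ generated by the images of the distinguished generators; these images span, inside the cotangent space of $ G^* $ at its identity --- canonically $ \gerg $ --- exactly the subspace $ \gerk $, and here the hypothesis that $ K $ is coisotropic (i.e.\ $ \gerk $ is a Lie coideal of $ \gerg $, equivalently $ \gerk^\bot $ is a Lie subalgebra of $ \gerg^* $) is precisely what forces this ideal to be the defining \emph{Hopf} ideal $ \calI(L) $ of the closed connected subgroup $ L = K^{\langle\bot\rangle} = K^\bot $ of $ G^* $ (Proposition \ref{bot-pro}), rather than merely a left ideal and two--sided coideal. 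The ``proper'' condition $ \nabla\big( {\gerI_q}^{\! !} \big) \subseteq (q-1)\,{U_q}' \wedge {\gerI_q}^{\! !} $ then follows, exactly as in Propositions \ref{curlyvee}(3) and \ref{triangledown}(3), by writing $ \nabla(x) = \delta_2(x) - \tau\big(\delta_2(x)\big) $ for $ x \in {\gerI_q}^{\! !} $ and using the defining divisibility of $ \delta_2(x) $; so $ {\gerI_q}^{\! !} = \calI_q(L) $ is a proper quantization of type $ \calI $, whence coisotropy of $ L $ is automatic from Proposition \ref{coisotropic creed}. For a general \emph{weak} $ \gerI_q $, where the pseudobasis bookkeeping is unavailable, one instead computes $ \pi_{{U_q}'}\big( {\gerI_q}^{\! !} \big) $ by reducing to the classical correspondence $ \gerI(K) \rightsquigarrow \calI\big( K^{\langle\bot\rangle} \big) $, exactly as in \cite{CiGa}.

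Claim (4) is immediate: $ {U_q}' $ is a $ * $--stable Hopf subalgebra of $ U_q $, and the system of conditions defining $ {\gerI_q}^{\! !} $ is preserved by $ x\mapsto S(x)^* $, since $ S $ is a coalgebra anti--homomorphism --- hence reverses the tensor slots, compensating the reindexing $ s\mapsto n-s+1 $ --- together with the hypothesis $ \big(S(\gerI_q)\big)^* = \gerI_q $. The step I expect to be the real obstacle is the precise identification, in claim (3), of the specialization $ \pi_{{U_q}'}\big( {\gerI_q}^{\! !} \big) $: pinning down exactly what survives modulo $ q-1 $, and ruling out that anything ``extra'' appears, is straightforward in the strict case via the pseudobasis comparison, but for weak --- and even proper--but--not--strict --- quantizations it needs a more indirect argument, and it is there that coisotropy of $ K $ must be used in order to guarantee that the output is the defining ideal of a genuine closed subgroup of $ G^* $. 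By contrast claim (1) (modulo its single splitting lemma), the properness in claim (3), and claim (4) are essentially formal.
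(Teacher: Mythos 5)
Your plan goes astray at the crucial point of claim \emph{(3)}: the properness you must verify is the wrong one. The output $ {\gerI_q}^{\! !} $ is a quantization of type $ \calI $, living inside the QFA $ {U_q(\gerg)}' $; by Definition \ref{properq}\,(a), ``proper'' for such an object means the \emph{commutator} condition $ \big[ {\gerI_q}^{\! !} , {\gerI_q}^{\! !} \big] \subseteq (q-1)\,{\gerI_q}^{\! !} $, not the condition $ \nabla\big({\gerI_q}^{\! !}\big) \subseteq (q-1)\,{U_q}' \wedge {\gerI_q}^{\! !} $, which is the properness condition for objects of type $ \gerI $/$ \gerC $ sitting in a QUEA --- i.e.\ precisely for the \emph{outputs} of Propositions \ref{curlyvee} and \ref{triangledown}, whose roles you have inverted. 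So writing $ \nabla(x) = \delta_2(x) - \tau\big(\delta_2(x)\big) $ and invoking the divisibility of $ \delta_2(x) $ proves nothing relevant here. The actual argument is genuinely different: given $ x, y \in {\gerI_q}^{\! !} $, one must show $ \delta_n\big([x,y]\big) \in (q-1)^{n+1} \sum_{s=1}^n U_q^{\otimes(s-1)}\otimes\gerI_q\otimes U_q^{\otimes(n-s)} $ for all $ n $, and this is done via formula (\ref{deltatwo}): the constraint $ \Lambda\cup Y = \{1,\dots,n\} $, $ \Lambda\cap Y \neq \emptyset $ forces $ |\Lambda|+|Y| \geq n+1 $, which supplies the extra factor of $ (q-1) $, while the left-ideal property of $ \gerI_q $ keeps the distinguished $ \gerI_q $-slot in place when the two factors are multiplied. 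Coisotropy of $ L $ then comes from properness via Proposition \ref{coisotropic creed}; your claim that coisotropy of $ K $ is what forces the image to be a defining ideal is not how the argument runs (the image is an ideal and two-sided coideal in $ F[G^*] $ simply because of claim (1), whence $ \pi\big({\gerI_q}^{\! !}\big) = \calI(L) $ for some subgroup $ L $), and in fact the paper's proof never uses coisotropy of $ K $.

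Two further points. For general \emph{weak} $ \gerI_q $ you propose to identify $ \pi_{{U_q}'}\big({\gerI_q}^{\! !}\big) $ with $ \calI\big(K^{\langle\perp\rangle}\big) $ ``exactly as in \cite{CiGa}'': that reference treats only strict formal quantizations, and in this paper the identification $ {\gerI_q(K)}^{!} = \calI_q\big(K^{\langle\perp\rangle}\big) $ is Theorem \ref{complimentary}\,(c), proved only under the extra hypothesis $ \gerI_q = \big({\gerI_q}^{!}\big)^{\curlyvee} $ (e.g.\ strictness); it is not needed for, and should not be smuggled into, the present proposition. Finally, for claim (2) your pseudobasis machinery is both heavier than necessary and unsubstantiated (the assertion that the monomials containing a distinguished generator form a $ \Cqqm $--pseudobasis of $ {\gerI_q}^{\! !} $ is itself essentially what must be proved): the paper's proof is a short direct computation --- for $ \eta \in {\gerI_q}^{\! !} \cap (q-1)\,{U_q}' $, strictness gives
\begin{equation*}
\Big( {\textstyle\sum_{s=1}^n} U_q^{\otimes(s-1)}\otimes\gerI_q\otimes U_q^{\otimes(n-s)} \Big) \cap (q-1)\,U_q^{\otimes n} \; = \; (q-1) \, {\textstyle\sum_{s=1}^n} U_q^{\otimes(s-1)}\otimes\gerI_q\otimes U_q^{\otimes(n-s)} \, ,
\end{equation*}
whence $ \eta \in (q-1)\,{\gerI_q}^{\! !} $ directly from the defining $ \delta_n $--conditions. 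Your treatment of claim (1) (left ideal via (\ref{deltaone}), coideal via completions/coassociativity) and of claim (4) is in line with the paper.
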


\begin{proof}
 {\it (1)} \,  Let  $ a \in {U_q}' $  and  $b \in{\gerI_q}^{\! !}\, $:  by definition of  ${\gerI_q}^{\! !}$,
from  $ \, \gerI_q \ideal_\ell U_q \, $  and from (\ref{deltaone}) we get
  $$  \delta_n(a b)  \, \in \,  {(q-1)}^n {\textstyle \sum\limits_{s=1}^n} {U_q}^{\otimes (s-1)} \!
\otimes \gerI_q \otimes {U_q}^{\otimes (n-s)}  $$
so  $ \; a \, b \in {\gerI_q}^{\! !} \, $,  thus  $ \, {\gerI_q}^{\! !} \ideal_\ell {U_q}' \, $.
                                              \par
   As to the coideal property, it is proven resorting to  $ (q-1) $--adic
completions, arguing as in the proof of Proposition 3.5 in \cite{Ga3}, and basing on the fact that  $ \gerI_q
\coideal \; U_q \, $.  Details are left to the reader.
 \vskip5pt
   {\it (2)} \,  Assume now  $ \gerI_q $  to be {\sl strict}.  The
inclusion
  $$  {\gerI_q}^{\! !} \,{\textstyle \bigcap}\, (q-1) \, {U_q(\gerg)}' \; \supseteq \; (q-1) \, {\gerI_q}^{\! !}  $$
\noindent is trivially true, and we must prove the converse.  Let  $ \, \eta \in {\gerI_q}^{\! !} \, \bigcap \, (q-1) \, {U_q(\gerg)}' \, $.  We have
  $$  \delta_n(\eta)  \; \in \;  {(q-1)}^n \left( \left( {\textstyle
\sum_{s=1}^n} {U_q}^{\otimes (s-1)} \otimes \gerI_q \otimes
{U_q}^{\otimes (n-s)} \right) {\textstyle \bigcap} \, (q-1)
\, {U_q}^{\otimes n} \right)  $$
\noindent for all  $ \, n \in \N_+ \, $.  But then our assumption gives
  $$  \displaylines{
   \left(\, {\textstyle \sum\limits_{s=1}^n} \; {U_q}^{\otimes (s-1)}
\otimes \gerI_q \otimes {U_q}^{\otimes (n-s)} \right) \, {\textstyle
\bigcap} \; (q-1) \, {U_q}^{\otimes n}  =   \hfill  \cr
   \hfill   =  {\textstyle \sum\limits_{s=1}^n} \; {U_q}^{\otimes
(s-1)} \otimes \Big( \gerI_q \, {\textstyle \bigcap} \, (q \! - \! 1)
\, U_q \Big) \otimes {U_q}^{\otimes (n-s)}  = \,  {(q \! - \! 1)}^{n+1}
{\textstyle \sum\limits_{s=1}^n} {U_q}^{\otimes (s-1)} \otimes \gerI_q
\otimes {U_q}^{\otimes (n-s)}  \cr }  $$
which, in turn, means  $ \, \eta \in (q-1) \, {\gerI_q}^{\! !} \, $.  Thus
$ \, {\gerI_q}^{\! !} \, \bigcap \, (q-1) \, {U_q(\gerg)}' \, \subseteq
(q-1) \, {\gerI_q}^{\! !} \, $, as expected.
 \vskip5pt
   {\it (3)}  Claim  {\it (1)\/}  implies that  $ {\gerI_q}^{\! !} $  is a weak quantization
of its image, therefore there exists a subgroup $L$ of $G^*$ such that
$ \; \pi_{{U_q}^{\!\prime}} \big( {\gerI_q}^{\! !} \,\big) = \calI(L) \, $ .  This  quantization  is even strict if  $ \gerI_q $  itself is strict, by the previous.  Now we show that this quantization  $ \calI_q(L) $  is always  {\sl proper\/}  --- hence the subgroup  $L$  is coisotropic, by Lemma \ref{coisotropic creed}.
 \vskip3pt
   Recall that, by definition,  $ \calI_q(L) $  is proper if and only if  $ \, [x,y] \in (q-1) \, {\gerI_q}^{\! !} \, $  for all  $ \, x, y \in {\gerI_q}^{\! !} \, $. From definitions we have
  $$  [x,y] \in (q \! - \! 1) \, {\gerI_q}^{\! !}  \; \Longleftrightarrow \;  \delta_n \big( [x,y] \big) \in
{(q \! - \! 1)}^{n+1} \, {\textstyle \sum_{s=1}^n} \, {U_q}^{\otimes (s-1)} \otimes \gerI_q \otimes
{U_q}^{\otimes (n-s)}  \quad  \forall\; n \! \in \! \N  $$
Then by formula (\ref{deltatwo}) we have (for all  $n \in \N$)
 \begin{equation}\label{quarantuno}
 \delta_n \big([x,y]\big)  \; = \;  {\textstyle \sum_{\substack{ \Lambda
\cup Y = \{1,\dots,n\}  \\   \Lambda \cap Y \not= \emptyset }}}
\hskip-1pt \big( \, \delta_\Lambda(x) \, \delta_Y(y) \, - \,
\delta_Y(y) \, \delta_\Lambda(x) \, \big)
\end{equation}
while (with notation of \S 4)
  $$  \displaylines{
   \delta_\Lambda(x) \in {(q-1)}^{\vert \Lambda \vert} \cdot
j_\Lambda \left( {\textstyle \sum_{s=1}^{\vert \Lambda \vert}}
\, {U_q}^{\otimes (s-1)} \otimes \gerI_q \otimes {U_q}^{\otimes
(\vert \Lambda \vert -s)} \right) \, ,  \cr
   \delta_Y(y) \in {(q-1)}^{\vert Y \vert} \cdot j_Y \left(
{\textstyle \sum_{s=1}^{\vert Y \vert}} \, {U_q}^{\otimes (s-1)}
\otimes \gerI_q \otimes {U_q}^{\otimes (\vert Y \vert -s)} \right)
\, ;  \cr }  $$
since  $\Lambda \cup Y = \{1, \dots, n\}$  and
$\Lambda \cap Y \not= \emptyset \, $  we have  $ \, \vert \Lambda
\vert + \vert Y \vert \geq n + 1 \, $;  moreover, for each index
$ \, i \in \{1, \dots, n \} \, $  we have  $ \, i \in \Lambda \, $
(and otherwise  $ \, \text{\sl Im}\,(j_\Lambda) \, $  has  $ 1 $  in
the  $ i $--th  spot) or  $ \, i \in Y$ (with the like remark
on  $\text{\it Im}\,(j_Y)$  if not).  As  $ \gerI_q $  is a
left ideal of  $ U_q$,  we conclude
  $$  \displaylines{
   \quad  \delta_\Lambda(x) \cdot \delta_Y(y) \, ,  \; \delta_Y(y)
\cdot \delta_\Lambda(x) \, \in \, {(q-1)}^{\vert \Lambda \vert +
\vert Y \vert} \, {\textstyle \sum_{s=1}^n} \, {U_q}^{\otimes (s-1)}
\otimes \gerI_q \otimes {U_q}^{\otimes (n-s)}   \hfill  \cr
   \hfill   \subseteq {(q-1)}^{n+1} \, {\textstyle \sum_{s=1}^n} \,
{U_q}^{\otimes (s-1)} \otimes \gerI_q \otimes {U_q}^{\otimes (n-s)}
\quad  \cr }  $$
so that (\ref{quarantuno}) gives  $\displaystyle{ \delta_n \big( [x,y] \big) \in
{(q\!-\!1)}^{n+1} \, {\textstyle \sum_{s=1}^n} \, {U_q}^{\otimes (s-1)}
\otimes \gerI_q \otimes {U_q}^{\otimes (n-s)} } $,  as expected.
 \vskip5pt
   {\it (4)} \, In the real case,  $ \, \big( S \big( {\gerI_q}^{\! !\,} \big) \big)^* \! = {\gerI_q}^{\! !} \, $  follows at once from definitions and from the identity  $ \, \big( S(\gerI_q) \big)^{\!*} \! = \gerI_q \; $.
\end{proof}

\smallskip

\begin{remark}
 In functorial language we may say that the map $ \gerI_q \mapsto
{\gerI_q}^{\! !} \, $ establishes a functor between quantized homogeneous spaces of $G$ and quantizations of coisotropic subgroups of $G^*$, moving from a local to a global description,  sending each type of quantization in a proper one and preserving strictness.  Once more, details are left to the interested reader.
\end{remark}

\medskip

   The fourth and last step is devoted to the map  $ \, \gerC_q\mapsto {\gerC_q}^{\!\!\Lsh} \, $.

\smallskip

\begin{proposition}\label{Lsh}
Let  $\gerC_q = \gerC_q(K)$  be a subalgebra and left coideal in $U_q(\gerg)\,$,  weak quantization (of type  $ \gerC $)  of some subgroup  $ K $  of  $ G \, $.  Then:
\begin{enumerate}
\item  \;  ${\gerC_q}^{\!\!\Lsh}$ is a subalgebra and left coideal in $U_q(\gerg)^\prime$;
\item  \;  if  $\gerC_q $  is  {\sl strict}, then  $ {\gerI_q}^{\! !} $  is strict too,  i.e.~$ \,\; {\gerC_q}^{\!\!\Lsh} \bigcap \, (q-1) \, {U_q(\gerg)}^\prime\, = \; (q-1) \, {\gerC_q}^{\!\!\Lsh} \;\, $;
 \item  \;  there exists a coisotropic subgroup $L$ in $G^*$ such that $ {\gerC_q(K)}^{\!\Lsh} \! = \calC_q(L) \, $:  namely,  $ {\gerC_q(K)}^{\!\Lsh} $  is a  {\sl proper}  quantization, of type  $ \calC \, $, of some coisotropic subgroup  $ L $  of  $ G^* \, $;
\item  \;  in the  {\sl real case},  i.e.~if the quantization  $ \gerC_q $  is a real one,  $ {\gerC_q(K)}^{\!\Lsh} $  is real too, i.e.~$ \, {\big( {\gerC_q}^{\!\!\Lsh} \,\big)}^* \! = {\gerC_q}^{\!\!\Lsh} \, $.  Therefore claims (1--3) still hold in the framework of  {\sl real}  quantum subgroups.
\end{enumerate}
\end{proposition}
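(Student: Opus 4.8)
The plan is to run, \emph{mutatis mutandis}, the very argument of Proposition \ref{esclamativo}: the map $ \gerC_q \mapsto {\gerC_q}^{\!\!\Lsh} $ is the ``unital subalgebra / left coideal'' analogue of the ``left ideal / two-sided coideal'' map $ \gerI_q \mapsto {\gerI_q}^{\! !} $, and at the same time it is the ``local'' counterpart --- through the Drinfeld functor $ U_q \mapsto {U_q}' $ --- of the ``global'' map $ \calC_q \mapsto {\calC_q}^{\!\!\triangledown} $ of Proposition \ref{triangledown}. The ingredients are the same: the comultiplicative identities (\ref{deltaone})--(\ref{deltatwo}), the description of $ {U_q(\gerg)}' $ and of its specialization $ {U_q(\gerg)}'\big/(q-1)\,{U_q(\gerg)}' = F[G^*] $ from Theorem \ref{GQDP}, the torsion-freeness --- hence $ \Cqqm $--flatness --- of all the modules $ {U_q}^{\otimes m} $, and Proposition \ref{coisotropic creed}. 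I expect the only genuinely technical point to be the left-coideal half of part \textit{(1)}.

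For part \textit{(1)}: that $ {\gerC_q}^{\!\!\Lsh} \subseteq {U_q}' $ is immediate from $ \delta_n(x) \in {(q-1)}^n\,{U_q}^{\otimes(n-1)}\otimes\gerC_q \subseteq {(q-1)}^n\,{U_q}^{\otimes n} $ together with (\ref{prime}). For the unital subalgebra property I would invoke (\ref{deltaone}): given $ x, y \in {\gerC_q}^{\!\!\Lsh} $ and $ \Phi = \{1,\dots,n\} $, every summand $ \delta_\Lambda(x)\,\delta_Y(y) $ of $ \delta_n(xy) $ carries a factor $ {(q-1)}^{|\Lambda|+|Y|} $ with $ |\Lambda|+|Y| \geq n $ (since $ \Lambda \cup Y = \Phi $), while its $ n $--th tensor slot is a product of elements of $ \gerC_q \cup \{1\} $ --- because $ n \in \Lambda \cup Y $ and, when $ n \in \Lambda $, the $ n $--th slot of $ \delta_\Lambda(x) $ is the ``coideal slot'' of $ \delta_{|\Lambda|}(x) $, which lies in $ \gerC_q $ --- hence it lies in $ \gerC_q $ as the latter is a subalgebra; thus $ \delta_n(xy) \in {(q-1)}^n\,{U_q}^{\otimes(n-1)}\otimes\gerC_q $, and $ 1 \in {\gerC_q}^{\!\!\Lsh} $ trivially. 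For the left-coideal property $ \Delta\big({\gerC_q}^{\!\!\Lsh}\big) \subseteq {U_q}'\otimes{\gerC_q}^{\!\!\Lsh} $ I would pass to $ (q-1) $--adic completions and argue as in the proof of Proposition 3.5 in \cite{Ga3}, now based on $ \gerC_q \coideal_\ell U_q $; this is the step I expect to be the main obstacle, and --- exactly as for Proposition \ref{esclamativo} --- I would present it only in outline.

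For part \textit{(2)}: the inclusion $ {\gerC_q}^{\!\!\Lsh} \cap (q-1)\,{U_q}' \supseteq (q-1)\,{\gerC_q}^{\!\!\Lsh} $ is trivial. For the converse I would take $ \eta \in {\gerC_q}^{\!\!\Lsh} \cap (q-1)\,{U_q}' $ and write $ \eta = (q-1)\,\zeta $ with $ \zeta \in {U_q}' \subseteq U_q $; then $ \delta_n(\eta) $ lies both in $ {(q-1)}^{n+1}\,{U_q}^{\otimes n} $ (from $ \zeta \in {U_q}' $) and in $ {(q-1)}^n\,{U_q}^{\otimes(n-1)}\otimes\gerC_q $ (from $ \eta \in {\gerC_q}^{\!\!\Lsh} $), so by $ \Cqqm $--flatness of $ {U_q}^{\otimes(n-1)} $ and the strictness hypothesis $ \gerC_q \cap (q-1)\,U_q = (q-1)\,\gerC_q $ one gets $ \delta_n(\eta) \in {(q-1)}^{n+1}\,{U_q}^{\otimes(n-1)}\otimes\gerC_q $; dividing by $ (q-1) $ shows $ \delta_n(\zeta) \in {(q-1)}^n\,{U_q}^{\otimes(n-1)}\otimes\gerC_q $, and since $ \zeta \in \gerC_q $ (because $ \eta \in \gerC_q \cap (q-1)\,U_q = (q-1)\,\gerC_q $) we conclude $ \zeta \in {\gerC_q}^{\!\!\Lsh} $, i.e.\ $ \eta \in (q-1)\,{\gerC_q}^{\!\!\Lsh} $.

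For parts \textit{(3)} and \textit{(4)}: by part \textit{(1)} and Theorem \ref{GQDP}, $ {\gerC_q}^{\!\!\Lsh} $ is a weak quantization of its image $ \pi_{{U_q}'}\big({\gerC_q}^{\!\!\Lsh}\big) \leq^1 \coideal_\ell \; {U_q}'\big/(q-1)\,{U_q}' = F[G^*] $, which is therefore of the form $ \calC(L) = {F[G^*]}^L $ for a suitable subgroup $ L $ of $ G^* $ (with the standing caveat that, on the $ \calC $--side, such a datum only pins down the observable hull $ \widehat{L} $); calling this quantization $ \calC_q(L) $, it is strict whenever $ \gerC_q $ is, by part \textit{(2)}. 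To show $ \calC_q(L) $ is always proper, i.e.\ $ \big[\,{\gerC_q}^{\!\!\Lsh}\,,\,{\gerC_q}^{\!\!\Lsh}\,\big] \subseteq (q-1)\,{\gerC_q}^{\!\!\Lsh} $, I would note $ [x,y] \in {\gerC_q}^{\!\!\Lsh} $ (it is a subalgebra) and compute $ \delta_n\big([x,y]\big) $ by (\ref{deltatwo}): now $ \Lambda \cap Y \neq \emptyset $ forces $ |\Lambda|+|Y| \geq n+1 $, while the $ n $--th slot of each $ \delta_\Lambda(x)\,\delta_Y(y) $ and $ \delta_Y(y)\,\delta_\Lambda(x) $ lies in $ \gerC_q $ exactly as in part \textit{(1)}, so $ \delta_n\big([x,y]\big) \in {(q-1)}^{n+1}\,{U_q}^{\otimes(n-1)}\otimes\gerC_q $ for all $ n $; reading off the case $ n = 1 $ (with $ \epsilon\big([x,y]\big) = 0 $) shows $ [x,y] = (q-1)\,\zeta $ with $ \zeta \in {\gerC_q}^{\!\!\Lsh} $, as wanted. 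Then Proposition \ref{coisotropic creed} forces $ \widehat{L} $ --- which we may take to be $ L $ itself, choosing $ L $ observable --- to be coisotropic, which is part \textit{(3)}. Finally, part \textit{(4)} is immediate from the definitions and from $ \gerC_q^* = \gerC_q $, exactly as in the corresponding claims of Propositions \ref{curlyvee}--\ref{esclamativo}, since $ \delta_n $ is compatible with $ * $ while both the defining condition of $ {\gerC_q}^{\!\!\Lsh} $ and the subspaces $ {(q-1)}^n\,{U_q}^{\otimes(n-1)}\otimes\gerC_q $ are stable under $ * $; the persistence of parts \textit{(1)}--\textit{(3)} in the real framework then follows at once.
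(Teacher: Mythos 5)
Your proposal is correct and follows essentially the same route as the paper's own proof: the subalgebra property via (\ref{deltaone}) with the $n$--th slot bookkeeping, the strictness claim via intersecting with $(q-1)\,{U_q}^{\otimes n}$ and using flatness plus $\gerC_q \cap (q-1)\,U_q = (q-1)\,\gerC_q$, properness via (\ref{deltatwo}) with $|\Lambda|+|Y| \geq n+1$ followed by Proposition \ref{coisotropic creed}, and the real case from the definitions. Deferring the left-coideal half of part \textit{(1)} to a $(q-1)$--adic completion argument modeled on the quoted Proposition 3.5 is exactly what the paper does as well (it cites the analogous statement and leaves the details to the reader), so your treatment matches the paper's level of detail throughout.
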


\begin{proof}
 The whole proof is very similar to that of Proposition \ref{esclamativo}.
\vskip5pt
   {\it (1)} \,  By definitions,  $1 \in \gerC_q$  and  $\delta_n(1) = 0$  for all  $n \in \N$,  so  $1 \in {\gerC_q}^{\!\!\Lsh} $.  Let  $ \, x, y \in {\gerC_q}^{\!\!\Lsh}
\, $  and  $ \, n \in \N \, $;  by (\ref{deltaone}) we have  $ \delta_n(x y) =
\sum_{\Lambda \cup Y = \{1,\dots,n\}}  \delta_\Lambda(x) \, \delta_Y(y)
\, $.  Each of the factors  $ \, \delta_\Lambda(x) \, $  belongs to a
module  $ \, {(q-1)}^{\vert \Lambda \vert} \, {U_q}^{\otimes ( \vert
\Lambda \vert - 1 )} \! \otimes X \, $  where the last tensor factor
is either  $ \, X = \gerC_q \, $  (if  $ \, n \in \Lambda \, $)  or  $ \,
X = \{1\} \subset \gerC_q \, $  (if  $ \, n \not\in \Lambda \, $),  and
similarly for  $\delta_Y(y)$;  in addition  $ \, \Lambda \cup Y
= \{1,\dots,n\} \, $  implies  $ \, \vert \Lambda \vert + \vert Y \vert
\geq n \, $,  \, and summing up  $ \, \delta_n(x y) \in {(q \! - \! 1)}^n
{U_q}^{\otimes (n-1)} \! \otimes \gerC_q \, $,  whence  $ \, x \, y \in
{\gerC_q}^{\!\!\Lsh}$.  Thus  $ {\gerC_q}^{\!\!\Lsh}$ is a subalgebra of ${U_q}' $.
                                              \par
   In order to prove that  $ \, {\gerC_q}^{\!\!\Lsh}$ is a left coideal in  ${U_q}^{\!\prime}$,  one can again resort to  $ (q-1) $--adic
completions, with exactly the same arguments as in the proof of Proposition
3.5 in \cite{CiGa}, starting from the fact that  $\gerC_q \, \coideal_\ell\, U_q $.  Details are left to the reader.
\vskip5pt
   {\it (2)} \,  Assume, now, that  $ \gerC_q $  is a  {\sl strict\/}  quantization, i.e.
$ \gerC_q \bigcap \, (q-1) \, F_q \, = \, (q-1) \, \gerC_q$.  Then
clearly  ${\gerC_q}^{\!\!\Lsh} \, \bigcap \, (q-1) \, {U_q(\gerg)}'
\, \supseteq \, (q-1) \, {\gerC_q}^{\!\!\Lsh} \, $,  and we must prove
the converse inclusion.  Let  $ \; \kappa \in {\gerC_q}^{\!\!\Lsh} \, \bigcap \, (q-1) \, {U_q(\gerg)}' \, $.  Then:
  $$  \displaylines{
   \delta_n(\kappa) \, \in \, {(q-1)}^n \left( \left( {U_q}^{\otimes (n-1)} \otimes \gerC_q \right) {\textstyle \bigcap} \, (q-1) \, {U_q}^{\otimes n} \right) \; =   \hfill  \cr
   \hfill   = \;  {(q-1)}^n \Big( {U_q}^{\otimes (n-1)} \otimes \big( \gerC_q \, {\textstyle \bigcap} \, (q \! - \! 1) \, U_q \big) \Big)
\, = \; {(q-1)}^{n+1} \cdot {U_q}^{\otimes (n-1)} \otimes \gerC_q
\cr }  $$
which means  $ \, \kappa \in (q-1) \, {\gerC_q}^{\!\!\Lsh} \, $.  Therefore
$ \, {\gerC_q}^{\!\!\Lsh} \, \bigcap \, (q-1) \, {U_q(\gerg)}' \subseteq (q-1) \, {\gerC_q}^{\!\!\Lsh} $, as claimed.
 \vskip5pt
   {\it (3)} \,  The above algebraic properties show that  $ {\gerC_q}^{\!\!\Lsh} $  is a weak quantization of its image  $ \pi_{{U_q}^{\!\prime}} \big( {\gerC_q}^{\!\!\Lsh} \,\big)$; thus there exists a coisotropic subgroup $L$ of $G^*$ such that:  $ \pi_{{U_q}^{\!\prime}} \big( {\gerC_q}^{\!\!\Lsh} \,\big) = \calC(L) $.  Thus  $ {\gerI_q}^{\! !} $  is a weak quantization   --- to be called  $ \calI_q(L) $  ---   of  $ \calI(L) \, $,  and it is even strict if  $ \gerI_q $  itself is strict, by the previous.  Now we show first that this quantization  $ \calI_q(L) $  is always  {\sl proper}   --- hence the subgroup  $ L $  is coisotropic, by Lemma \ref{coisotropic creed}.
 \vskip2pt
  Proving that  $ \calI_q(L) $  is proper amounts to show that $ [x,y] \in (q-1) \, {\gerC_q}^{\!\!\Lsh}$  for all  $ x, y \in {\gerC_q}^{\!\!\Lsh}$.  By definition we have
  $$  [x,y] \, \in \, (q \! - \! 1) \, {\gerC_q}^{\!\!\Lsh}  \;\;
\Longleftrightarrow \;\;  \delta_n \big( [x,y] \big) \, \in
\, {(q \! - \! 1)}^{n+1} {U_q}^{\otimes (n-1)} \otimes
\gerC_q  \quad  \forall\; n \! \in \! \N  $$
and formula (\ref{deltatwo}) gives, for all  $ n \in \N$,
  \begin{equation}\label{quarantadue}
 \delta_n \big([x,y]\big) \, = \, {\textstyle \sum_{\substack{  \Lambda
\cup Y = \{1,\dots,n\}  \\   \Lambda \cap Y \not= \emptyset }}}
\hskip-1pt \big( \, \delta_\Lambda(x) \, \delta_Y(y) \, - \,
\delta_Y(y) \, \delta_\Lambda(x) \, \big)
  \end{equation}
while
  $$  \delta_\Lambda(x) \, \in \, {(q \! - \! 1)}^{\vert \Lambda \vert} \,
j_\Lambda \Big( {U_q}^{\otimes (|\Lambda|-1)} \otimes \gerC_q \Big) \; ,
\quad  \delta_Y(y) \, \in \, {(q \! - \! 1)}^{\vert Y \vert} \,
j_Y \Big( {U_q}^{\otimes (|Y|-1)} \otimes \gerC_q \Big) \; .  $$
Now,   $\Lambda \cup Y = \{1, \dots, n\} $  and  $\Lambda \cap Y \not= \emptyset$
 give  $\vert \Lambda\vert + \vert Y \vert \geq n + 1 $,  and since  $ \gerC_q $
is a subalgebra of  $ U_q $  we get
  $$  \delta_\Lambda(x) \, \delta_Y(y) \, ,  \; \delta_Y(y)
\, \delta_\Lambda(x) \, \in \, {(q \! - \! 1)}^{\vert \Lambda \vert
+ \vert Y \vert} \, {U_q}^{\otimes (n-1)} \otimes \gerC_q \subseteq
{(q \! - \! 1)}^{n+1} {U_q}^{\otimes (n-1)} \otimes \gerC_q  $$
so that (\ref{quarantadue}) yields
  $$  \delta_n \big([x,y] \big) \, \in \, {(q \! - \! 1)}^{n+1} {U_q}^{\otimes (n-1)}\otimes \gerC_q  $$
thus  $ \; [x,y] \in (q-1) \,{\gerC_q}^{\!\!\Lsh} \; $.
\vskip5pt
   {\it (4)} \,  In the real case  $ \, {(\gerC_q)}^* = \gerC_q \, $:  this and the very definitions imply the claim.
\end{proof}

\smallskip

\begin{remark}
 In functorial language we may say that the map $ \, \gerC_q
\mapsto {\gerC_q}^{\!\!\Lsh}$  establishes a functor between quantization of coisotropic subgroups of $G$ and quantizations of Poisson homogeneous spaces of $G^*$, moving from a local to a global description,  sending each type of quantization in a proper one and preserving strictness.  We leave to the interested reader all details which still need to be fixed.

\end{remark}

\vskip9pt

We now move to connectedness properties of the coisotropic subgroup $L$ identified in Propositions \ref{esclamativo} and \ref{Lsh}.

\smallskip

\begin{proposition}\label{connection}
{\ }
                                   \par
\begin{enumerate}
   \item  \;  Let $\,\gerI_q(K)$ \  be a  {\sl strict}  quantization (of type  $ \gerI $) of a (coisotropic) subgroup $K$ in $G\,$.  Then the subgroup  $ L$  of  $ G^* $  such that  $ \, {\gerI_q(K)}^! = \calI_q(L) \, $  is {\sl connected}.
   \item  \;  Let  $ \, \gerC_q(K) $  be a  {\sl strict}  quantization of type  $ \gerC$ of a (coisotropic) subgroup $K$ of $G\,$.  Then the subgroup  $ L $  of  $ G^* $  such that  $ \, {\gerC_q(K)}^! = \calC_q(L) \, $  is {\sl connected}.
\end{enumerate}
\end{proposition}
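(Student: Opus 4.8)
The plan is to reduce the connectedness of $L$ to an inclusion of ideals inside $F[G^*]=U_q(\gerg)'\big/(q-1)\,U_q(\gerg)'$, and to establish that inclusion from the explicit pseudobasis description of the $q$--objects involved, which is available precisely because the input quantization is \emph{strict}.

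First, the reduction. By the reconstruction facts of Subsection \ref{subgrps-homspaces}, a Zariski--closed subgroup $L$ of the connected group $G^*$ is connected iff $L=L^\circ=\exp\!\big(\mathrm{Lie}(L)\big)$; since $L^\circ\subseteq L$ always forces $\calI(L)\subseteq\calI(L^\circ)$, this amounts to the reverse inclusion $\calI(L)\supseteq\calI(L^\circ)$, i.e.\ every regular function on $G^*$ vanishing on $\exp(\mathrm{Lie}(L))$ lies in $\calI(L)$. In case \textit{(1)} we have $\calI(L)=\pi_{{U_q}'}\big({\gerI_q(K)}^{!}\big)$ by Proposition \ref{esclamativo}, so this is exactly what must be proved; for case \textit{(2)}, where $\calC(L)=\calC(\widehat L)$ detects only the observable hull, I would run the same computation with ``algebra of invariants'' replacing ``defining ideal'' and invoke the ``four--fold'' correspondence together with preservation of orthogonality (sketched after Theorem \ref{GQDP}) to see that the subgroup singled out by the construction is the same one reached on the $\calI$--side.

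For the inclusion itself I would argue via $(q-1)$--adic completions, as in the proofs of Propositions \ref{curlyvee}\,\textit{(2)}, \ref{triangledown}\,\textit{(2)} and \ref{esclamativo}\,\textit{(2)}. Since $U_q(\gerg)'$ is a QFA for $G^*$ (Theorem \ref{GQDP}\,\textit{(a)}), by \cite{Ga3}, Lemma 4.1 its $J'$--adic completion, $J':=\Ker\big(\epsilon\colon U_q(\gerg)'\to\Cqqm\big)$, has a $\Cqqm$--pseudobasis of ordered monomials in lifts $\{j_1,\dots,j_m\}$ of a local system of parameters $\{y_1,\dots,y_m\}$ of $G^*$ at $e$. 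Choosing these \emph{adapted} to $L$ --- with $y_s$, $s>\dim L$, spanning $\mathfrak m_e\cap\calI(L)$ modulo $\mathfrak m_e^{\,2}$, and $j_s$, $s>\dim L$, lying in $\calI_q(L)={\gerI_q(K)}^{!}$ --- strictness of $\calI_q(L)$ (Proposition \ref{esclamativo}\,\textit{(2)}) forces the $J'$--adic completion of $\calI_q(L)$ to be the closed ideal generated by $j_{\dim L+1},\dots,j_m$. Specializing at $q=1$, and using that $\calI(L)$ is a Hopf ideal, one reads off that $\calI(L)$ is the defining ideal of the connected subgroup $\exp(\mathrm{Lie}(L))$, so $L=L^\circ$. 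Part \textit{(2)} is parallel, with ${\gerC_q(K)}^{\Lsh}$ (strict by Proposition \ref{Lsh}\,\textit{(2)}) in place of ${\gerI_q(K)}^{!}$.

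The step I expect to be the main obstacle is the passage from the \emph{formal} (completed) description --- which controls only a neighbourhood of $e$, hence a priori only $L^\circ$ --- to the \emph{global} equality $\calI(L)=\calI(L^\circ)$, that is, excluding further connected components of $L$, equivalently showing that $F[G^*]/\calI(L)$ has no non--trivial idempotents. This is where the hypothesis \emph{strict} (rather than merely \emph{proper} or \emph{weak}) is used decisively: without it the completion of $\calI_q(L)$ need not be ideal--generated by the adapted $j_s$, and this control is lost. It is also the point at which the global picture departs from the formal one of \cite{CiGa}, where all formal subgroups are connected by construction and no analogue of this proposition arises.
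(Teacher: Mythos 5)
Your reduction of connectedness to the absence of non-trivial idempotents in $F[L]=F\big[G^*\big]\big/\calI(L)$ (equivalently, to $\calI(L)=\calI(L^\circ)$) is the right starting point, but the argument you give for that step cannot reach it, and you concede as much in your closing paragraph. The adapted-pseudobasis computation in the $J'$--adic (or $(q-1)$--adic) completion only records what happens in a formal neighbourhood of the identity of $G^*$: it recovers $\mathrm{Lie}(L)$, hence at best $L^\circ$, but it is structurally incapable of excluding further connected components, because $\calI(L)$ and $\calI(L^\circ)$ have the same image in that completion whenever $L\supseteq L^\circ$. So ``specializing at $q=1$ one reads off that $\calI(L)$ is the defining ideal of $\exp(\mathrm{Lie}(L))$'' is precisely the unproved assertion, and the remark that strictness ``is used decisively'' there is not an argument: you never exhibit the mechanism by which strictness kills idempotents. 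This is a genuine gap, not a missing detail.

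The paper closes exactly this gap by a global, coalgebra-theoretic argument rather than a completion argument. Set $H:=U_q\big/\gerI_q$ and $H':={U_q}'\big/{\gerI_q}^{\,!}$. Strictness of $\gerI_q$ gives ${U_q}'\cap\gerI_q={\gerI_q}^{\,!}$, so $H'$ embeds into $H$ and is characterized as $\big\{\eta\in H \ \big|\ \delta_n(\eta)\in(q-1)^nH^{\otimes n}\ \forall\,n\big\}$. An idempotent of $F[L]$ is lifted to $a\in H'$ with (after a harmless normalization) $\epsilon(a)=0$ and $a=a^2+(q-1)\,c$, $c\in H'$, $\epsilon(c)=0$; applying $\delta_n$ and the product formula (\ref{deltaone}), every summand $\delta_\Lambda(a)\,\delta_Y(a)$ with $\Lambda,Y\neq\emptyset$ and the term $(q-1)\,\delta_n(c)$ lie in $(q-1)^{n+1}H^{\otimes n}$, whence $\delta_n(a)\in(q-1)^{n+1}H^{\otimes n}$ for all $n$, i.e.\ $a\in(q-1)\,H'$, so the idempotent is trivial and $L$ is connected. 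Part \textit{(2)} is handled the same way after reducing to $L$ observable and replacing $\gerI_q$, ${\gerI_q}^{\,!}$ by $\Psi(\gerC_q)=U_q\,\gerC_q^{\,+}$ and $\Psi\big({\gerC_q}^{\,\Lsh}\big)$; your proposal for \textit{(2)}, invoking the four-fold correspondence and ``preservation of orthogonality'', is likewise too vague, since $\calC$ only detects the observable hull and no double quantization is assumed in this Proposition. If you want to salvage your outline, the missing lemma is precisely this idempotent computation inside $H'\subseteq H$.
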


\begin{proof}
 {\it (1)} \,  Saying that the (closed) subgroup  $ L $  is connected is equivalent to saying that its function algebra  $ \, F[L] = F\big[G^*\big] \Big/ \calI(L) \, $  has no non-trivial idempotents.
 Note that, since  $ F\big[G^*\big] $  is the specialization of  ${U_q}^{\!\prime} $  at  $ q = 1$  and  $ \calI(L) $  is the similar specialization of  ${\gerI_q}^{\! !} \, $,  the quotient  $ F[L] = F \big[ G^* \big] \Big/ \calI(L)$  is canonically isomorphic to the specialization at  $ \, q = 1 \, $  of $ \, {U_q}^{\!\prime} \!\Big/ \, {\gerI_q}^{\! !} \, $.  Let $\overline{a}$ be an idempotent in $F[L]$:  if we take any lift of it  in  ${U_q}^{\!\prime} \! \Big/ \, {\gerI_q}^{\! !} \, $,  \, i.e.~any  $a \in {U_q}^{\!\prime} \! \Big/ \, {\gerI_q}^{\! !} $  such that  $\overline{a} = a \! \mod (q\!-\!1) \, {U_q}^{\!\prime} \! \Big/ \, {\gerI_q}^{\! !} \, $. We must prove:
\begin{equation}
\label{idempotent}
  a^2 \, \equiv \, a \mod (q\!-\!1) \, {U_q}^{\!\prime} \! \Big/ \, {\gerI_q}^{\! !}  \quad \Longrightarrow \quad  a \! \mod (q\!-\!1) \, {U_q}^{\!\prime} \! \Big/ \, {\gerI_q}^{\! !} \, \in \, \big\{ 0, 1 \big\}
\end{equation}
We can clearly reduce to the case when  $\epsilon (\overline{a}) = 0$:  in fact, if  $\overline{a}^{\,2} = \overline{a} $  then  $ \epsilon(\overline{a}) $  is necessarily  $ 0 $  or  $ 1 $  (for it is unipotent too), and in the latter case we then find that  $\overline{a}_0 := 1 - \overline{a}$  is idempotent and  $\epsilon(\overline{a}_0) = 0$.  Also the lift   $a \in {U_q}^{\!\prime} \! \Big/ {\gerI_q}^{\! !} $  can be chosen, in this case, such that:  $\epsilon(a) = 0$.
 To simplify notation, we set  $ H := U_q \! \Big/ \gerI_q $  and  $ H{\text{\bf '}} := {U_q}^{\!\prime} \! \Big/ {\gerI_q}^{\! !} $.  We shall prove that, if  $ a \in H{\text{\bf '}}$,
$\epsilon(a) = 0$  and  $ a^2 \equiv a \mod (q\!-\!1) \, H{\text{\bf '}}$, then  $a \equiv 0 \mod (q\!-\!1) \, H{\text{\bf '}}$,  i.e.~$ a \in (q\!-\!1) \, H{\text{\bf '}} \, $;   in fact, this will give (\ref{idempotent}).
                                            \par
   Having assumed that  $ \gerI_q $  to be strict,  $H{\text{\bf '}}$  identifies with
a  $ \Cqqm $--submodule  of  $H$ given in terms of the coalgebra structure of the latter: the embedding is the one canonically induced by the maps  $ {U_q}^{\!\prime}
\lhook\joinrel\longrightarrow \, U_q \relbar\joinrel\twoheadrightarrow
U_q \Big/ \, \gerI_q $.  In fact, the kernel of the latter map is
$ {U_q}^{\!\prime} \, \bigcap \, \gerI_q $ (by strictness assumption). It is easy to see from definitions that ${U_q}^{\!\prime} \, \bigcap \, \gerI_q = {\gerI_q}^{\! !}$.
Thus  $H{\text{\bf '}}$  does embed into  $H$:
\begin{equation}\label{quarantaquattro}
 H{\text{\bf '}}=
\bigg\{\, \eta \in H \,\;\bigg|\;\, \delta_n(\eta)
\in {(q\!-\!1)}^n \, H^{\! \otimes \,n}\, ,  \; \forall \; n \in \N \;\bigg\}  \, .
\end{equation}
   \indent   Now,  $a^2 \, \equiv \, a \mod (q\!-\!1)H{\text{\bf '}}$  means  $ a = a^2 + (q\!-\!1) \, c$  for some  $ \, c \in H{\text{\bf '}} $;  since  $\epsilon(a) =0$, we have  $\epsilon(c) = 0$  as well.  Applying $\delta_n $  to the identity  $ a = a^2 + (q\!-\!1) \, c$  and using formula (\ref{deltaone}) we get
  \[
\delta_n(a)  \; = \;  \delta_n\big(a^2\,\big) \, + \, (q\!-\!1) \,
\delta_n(c)  \; = \hskip-1pt  {\textstyle \sum\limits_{\Lambda \cup Y
= \{1,\dots,n\}}} \hskip-11pt  \delta_\Lambda(a) \; \delta_Y(a) \; +
\; (q-1) \; \delta_n(c)
  \]
for all  $n \in \N$, which   --- noting that  $\delta_0(a) := \epsilon(a) = 0$
yields:
\begin{equation}\label{quarantacinque}
 \delta_n(a)  \;\; =  {\textstyle \sum\limits_{\substack{\Lambda \cup Y = \{1,\dots,n\} \\  \Lambda, Y \not= \emptyset}}} \hskip-3pt  \delta_\Lambda(a) \; \delta_Y(a) \; + \; (q-1) \;
\delta_n(c)
\end{equation}
Since  $c \in H{\text{\bf '}} \, $, the last summand  $(q\!-\!1) \, \delta_n(c)  $  in right-hand side of (\ref{quarantacinque}) belongs to  $ \, (q\!-\!1)^{n+1} \, H^{\otimes n}$, thanks to (\ref{quarantaquattro}).
Similarly, since  $a \in H{\text{\bf '}}$  we have  $ \delta_k(a) \in (q\!-\!1)^k \, H^{\otimes k}$  for all  $k \in \N$,  by (\ref{quarantaquattro}) again: therefore each summand
$\delta_\Lambda(a) \; \delta_Y(a)$  in right-hand side of (\ref{quarantacinque}) belongs to
$ (q-1)^{n+1} \, H^{\otimes n} $  as well.  But then (\ref {quarantacinque}) yields
$ \delta_n(a) \, \in \, {(q-1)}^{n+1} H^{\otimes n}$  for all  $n \in \N$, which, again by (\ref{quarantaquattro}), means exactly that  $ \, a \in (q\!-\!1) \, H{\text{\bf '}}$. This ends the proof of the first claim.
 \vskip5pt
   {\it (2)} \,  We will use similar arguments to show this claim:  $ F[L] = F\big[G^*\big] \Big/ \calI(L)$  has no non-trivial idempotents.  Since  ${\gerC_q}^{\!\!\Lsh} = \calC_q(L)$  and  $\calC(L) = \calC \big(\widehat{L}\,\big)$, we can  assume  $L = \widehat{L}$,  i.e.~$ L $  is observable.  This implies  $\calI(L) = \Psi\big(\calC(L)\big)$, which is clearly the specialization at  $q = 1$  of
$\Psi\big(\calC(L)\big) = {U_q}^{\!\prime} \, {\gerC_q}^{\!\!\Lsh}$;  therefore,
$ F[L] = F\big[G^*\big] \Big/ \calI(L) \, $  is canonically isomorphic to the specialization at
$q = 1$  of  $ {U_q}^{\!\prime} \! \Big/ \, {U_q}^{\!\prime} \, {\gerC_q}^{\!\!\Lsh} \; $.
                                                      \par
   From now on, one can mimic step by step the proof of  part  {\it (1)}.
The only detail to modify is that one must take  $U_q \, {\gerC_q}^{\!+} =:
\Psi(\gerC_q)$  in place of  $ \gerI_q\, $,  \, and  ${U_q}^{\!\prime} \, \big({\gerC_q}^{\!\!\Lsh}\big)^{\!+} =: \Psi \big( {\gerC_q}^{\!\!\Lsh} \big)$ in place of  $ {\gerI_q}^{\! !} \, $.  Letting  $H := U_q \Big/ \Psi(\gerC_q) \, $, and
$ \, H{\text{\bf '}} := {U_q}^{\!\prime} \! \Big/ \Psi \big( {\gerC_q}^{\!\!\Lsh}
\big) \, $, the thesis amounts to prove that
\[
a \in H{\text{\bf '}}\, ,\quad a^2 \equiv a \mod (q\!-\!1) \, H{\text{\bf '}} \Rightarrow
 a \equiv 0 \mod (q\!-\!1) \, H{\text{\bf '}}
\]
(In fact  also $a \equiv 1 \mod (q\!-\!1) \, H{\text{\bf '}}$ would be ok, but, arguing as before, we'll restrict to the case $\epsilon(a) = 0$).
                                            \par
   As  $ \gerC_q $  is strict, it is easy to see from definitions that
${\gerC_q}^{\!\!\Lsh} = {U_q}^{\!\prime} \, \bigcap \, \gerC_q$, hence  $\Psi \big( {\gerC_q}^{\!\!\Lsh} \big) := {U_q}^{\!\prime} \, \big({\gerC_q}^{\!\!\Lsh}\big)^{\!+} = {U_q}^{\!\prime} \big( {U_q}^{\!\prime}
\cap \gerC_q \big)^{\!+} $: the latter is the kernel of the map
$ {U_q}^{\!\prime} \lhook\joinrel\longrightarrow U_q
\relbar\joinrel\twoheadrightarrow U_q \Big/ U_q \, \gerC_q^{\;+}$,  so
$H{\text{\bf '}}$  embeds as a  $ \Cqqm $--submodule  of  $ H$, namely
\[
 H{\text{\bf '}}= \bigg\{\, \eta \in H \;
\bigg| \, \delta_n(\eta) \in {(q\!-\!1)}^n \, H^{\! \otimes \,n} \, ,  \; \forall \; n \in
\N \,\bigg\}  \, .
\]
With this description at hand, computations are as in the proof of claim  {\it (1)}.
\end{proof}

\vskip7pt

 Our next results are about the behavior of quantum subgroups under composition of Drinfeld-like maps.

\vskip13pt

\begin{proposition}\label{reciprocity}
 Let  $\,\calI_q\,$,  $ \calC_q \, $,  $ \gerI_q \, $, $ \gerC_q \, $  be weak quantizations of a subgroup $K$ of $G\,$.  Then:
\begin{enumerate}
\item  \;\qquad  $ \calI_q \subseteq \big( {\calI_q}^{\!\curlyvee} \big)^! \;\; ,  \quad  \calC_q \subseteq \big( {\calC_q}^{\!\!\triangledown} \,\big)^{\!\Lsh} \;\; $;
\item  \;\qquad  $ \gerC_q \supseteq \big( {\gerC_q}^{\!\!\Lsh} \big)^{\!\triangledown} \;\; ,  \quad  \gerI_q \supseteq \big( {\gerI_q}^{\! !} \,\big)^{\! \curlyvee} \;\; $.
\end{enumerate}
 \eject
\end{proposition}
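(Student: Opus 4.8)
The plan is to establish all four inclusions by a uniform piece of $(q-1)$--adic bookkeeping resting on two elementary observations. First, for any element $x$ with $\epsilon(x)=0$ in any of the Hopf algebras in play one has $\delta_1(x)=(\id-\epsilon)(x)=x$, so that the defining conditions of the Drinfeld-type objects of Definition~\ref{maps}, read at $n=1$, collapse to crude containments. Second, for any $H\in\HA$ with $J_H:=\Ker(\epsilon_H)$ one has $J_H^{\,m}\subseteq(q-1)^m\,H^\vee$ for all $m$ (immediate from $H^\vee=\sum_m(q-1)^{-m}J_H^{\,m}$, see (\ref{check})), and $\Ker(\epsilon_{H'})\subseteq(q-1)\,H$ (immediate from the $n=1$ instance of (\ref{prime})). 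One also uses, to make sense of the composite maps at all, that ${\calI_q}^{\!\curlyvee}$, ${\calC_q}^{\!\!\triangledown}$, ${\gerI_q}^{\! !}$, ${\gerC_q}^{\!\!\Lsh}$ are of the stated algebraic types inside the ambient ${F_q}^{\!\vee}$ or ${U_q}'$ (Propositions~\ref{curlyvee}--\ref{Lsh}), that ${F_q}^{\!\vee}$ and ${U_q}'$ lie in $\HA$ and are a QUEA resp.~a QFA (Theorem~\ref{GQDP}(a)), and --- so that both sides of each inclusion live in the same space --- that $\big({F_q}^{\!\vee}\big)'=F_q$ and $\big({U_q}'\big)^{\!\vee}=U_q$ (Theorem~\ref{GQDP}(b)).

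For $\calI_q\subseteq\big({\calI_q}^{\!\curlyvee}\big)^!$: by (\ref{easy-incl})(i) we have $\calI_q\subseteq{\calI_q}^{\!\curlyvee}$, so it remains to verify, for $x\in\calI_q$, the defining estimate of $\big(\,\cdot\,\big)^!$ computed inside ${F_q}^{\!\vee}$, i.e.~$\delta_n(x)\in(q-1)^n\sum_{s=1}^n\big({F_q}^{\!\vee}\big)^{\otimes(s-1)}\!\otimes{\calI_q}^{\!\curlyvee}\otimes\big({F_q}^{\!\vee}\big)^{\otimes(n-s)}$ for all $n$. Writing $\delta_n=(\id-\epsilon)^{\otimes n}\circ\Delta^n$, I would use the coideal property of $\calI_q$ in $F_q$ --- which by an easy induction gives $\Delta^n(\calI_q)\subseteq\sum_{s=1}^n{F_q}^{\otimes(s-1)}\!\otimes\calI_q\otimes{F_q}^{\otimes(n-s)}$ --- together with $(\id-\epsilon)(F_q)\subseteq J_{F_q}$ and $(\id-\epsilon)(\calI_q)\subseteq\calI_q$ (as $\epsilon(\calI_q)=0$) to get $\delta_n(x)\in\sum_{s=1}^n J_{F_q}^{\otimes(s-1)}\!\otimes\calI_q\otimes J_{F_q}^{\otimes(n-s)}$; then $J_{F_q}\subseteq(q-1)\,{F_q}^{\!\vee}$ and $\calI_q\subseteq(q-1)\,{\calI_q}^{\!\curlyvee}$ (the $n=1$ summand of Definition~\ref{maps}(a)) contribute exactly $n$ factors of $(q-1)$, as needed. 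The inclusion $\calC_q\subseteq\big({\calC_q}^{\!\!\triangledown}\big)^{\!\Lsh}$ is strictly parallel: one uses $\calC_q\subseteq{\calC_q}^{\!\!\triangledown}$ from (\ref{easy-incl})(ii), the left-coideal estimate $\Delta^n(\calC_q)\subseteq{F_q}^{\otimes(n-1)}\!\otimes\calC_q$, the containment $(\id-\epsilon)(\calC_q)\subseteq\calC_q\cap J_{F_q}$ (valid since $\calC_q$ is unital), and $\calC_q\cap J_{F_q}\subseteq(q-1)\,{\calC_q}^{\!\!\triangledown}$ (the $n=1$ summand of Definition~\ref{maps}(b)), to land in $(q-1)^n\big({F_q}^{\!\vee}\big)^{\otimes(n-1)}\!\otimes{\calC_q}^{\!\!\triangledown}$.

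For part~(2) I would instead bound the generators of the (expanding) right-hand operations directly. For $\gerC_q\supseteq\big({\gerC_q}^{\!\!\Lsh}\big)^{\!\triangledown}$: with $J':=\Ker(\epsilon_{{U_q}'})$, the $n$-th generator of the right-hand side is $(q-1)^{-n}\big({\gerC_q}^{\!\!\Lsh}\cap J'\big)^n$; but for $x\in{\gerC_q}^{\!\!\Lsh}\cap J'$ the $n=1$ instance of Definition~\ref{maps}(d) gives $x=\delta_1(x)\in(q-1)\,\gerC_q$, whence $\big({\gerC_q}^{\!\!\Lsh}\cap J'\big)^n\subseteq(q-1)^n\,\gerC_q$ because $\gerC_q$ is a subalgebra, so the generator lies in $\gerC_q$ (the $n=0$ term being $\Cqqm\!\cdot\!1\subseteq\gerC_q$). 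For $\gerI_q\supseteq\big({\gerI_q}^{\! !}\big)^{\!\curlyvee}$: the $n$-th generator is $(q-1)^{-n}(J')^{n-1}\,{\gerI_q}^{\! !}$; since $\epsilon(\gerI_q)=0$, the $n=1$ instance of Definition~\ref{maps}(c) gives ${\gerI_q}^{\! !}\subseteq(q-1)\,\gerI_q$, while $J'\subseteq(q-1)\,U_q$ gives $(J')^{n-1}\subseteq(q-1)^{n-1}\,U_q$; multiplying, the generator lies in $(q-1)^{-n}(q-1)^{n-1}\,U_q\cdot(q-1)\,\gerI_q=U_q\,\gerI_q=\gerI_q$ because $\gerI_q$ is a left ideal.

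I do not expect a substantive obstacle: once the two ``$n=1$'' observations are isolated, each inclusion is a short count of powers of $(q-1)$ closed off by ``subalgebra'', ``left ideal'' or ``two-sided coideal''. The only point that genuinely needs care is the bookkeeping of ambient algebras --- e.g.~in $\big({\calI_q}^{\!\curlyvee}\big)^!$ the outer $\big(\,\cdot\,\big)^!$ is taken inside ${F_q}^{\!\vee}$, so the relevant $\delta_n$ is the iterated coproduct of ${F_q}^{\!\vee}$ (agreeing with that of $F_q$ on the elements we test), and it is Theorem~\ref{GQDP}(b) that places the output back inside $F_q$ (resp.~$U_q$) so that the asserted inclusion even typechecks; with that settled, the four arguments are genuinely short and mutually symmetric.
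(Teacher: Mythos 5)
Your proposal is correct and follows essentially the same route as the paper: part (1) is the paper's computation $\delta_n=(\id-\epsilon)^{\otimes n}\circ\Delta^n$ combined with the containments $J_{F_q}\subseteq(q-1)\,{F_q}^{\!\vee}$, $\calI_q\subseteq(q-1)\,{\calI_q}^{\!\curlyvee}$, $\calC_q\cap J_{F_q}\subseteq(q-1)\,{\calC_q}^{\!\!\triangledown}$, and part (2) rests on the same $\delta_1$ observation ($\gerI_q^{\,!}\subseteq(q-1)\,\gerI_q$, $\gerC_q^{\,\Lsh}\cap J'\subseteq(q-1)\,\gerC_q$) absorbed by the left-ideal resp.\ subalgebra property. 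The only differences are cosmetic --- you bound each graded generator $(q-1)^{-n}(J')^{n-1}\gerI_q^{\,!}$ directly instead of reducing to the degree-one generating set as the paper does, and in (1) you apply $(\id-\epsilon)^{\otimes n}$ factorwise rather than via the intersection identity --- so the argument is sound and matches the paper's proof in substance.
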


\begin{proof}
 {\it (1)} \,  By the very definitions, for any  $n \in \N$  we have
  \[  \displaylines{
   \delta_n\big(\calI_q\big) \, \subseteq \, {J_{\scriptscriptstyle F_q}}^{\!\!\otimes n} \,{\textstyle \bigcap}\, \Big( {\textstyle \sum_{s=0}^n} \, {F_q}^{\!\otimes s} \otimes \calI_q \otimes {F_q}^{\!\otimes (n-s-1)} \Big) = {\textstyle \sum_{s=0}^n} \, {J_{\scriptscriptstyle F_q}}^{\!\!\otimes s} \otimes \calI_q \otimes {J_{\scriptscriptstyle F_q}}^{\!\!\otimes (n-s-1)} \subseteq   \hfill  \cr
   \hfill   \subseteq \, {(q-1)}^n \cdot {\textstyle \sum_{s=0}^n} \, {\big( {F_q}^{\!\vee} \big)}^{\otimes s} \! \otimes {\calI_q}^{\!\curlyvee} \!\otimes {\big( {F_q}^{\!\vee} \big)}^{\otimes (n-s-1)}  \cr }
\]
which means exactly  $ \, \calI_q \subseteq {\big( {\calI_q}^{\!\curlyvee} \big)}^! \; $.  Similarly we can remark that:
 \[
  \delta_n\big(\calC_q\big) \, \subseteq \, {J_{\scriptscriptstyle F_q}}^{\!\!\otimes n} \,{\textstyle \bigcap}\, \big( {F_q}^{\!\otimes (n-1)} \otimes \calC_q \big) \, = \, {J_{\scriptscriptstyle F_q}}^{\!\!\otimes (n-1)} \otimes \big( \calC_q \,{\textstyle \bigcap}\, J_{\scriptscriptstyle F_q} \big) \, \subseteq \, {(q \! - \! 1)}^n {\big( {F_q}^{\!\vee} \big)}^{\!\otimes (n-1)} \!
\otimes {\calC_q}^{\!\!\triangledown}
 \]
which means  $\calC_q \subseteq \big( {\calC_q}^{\!\!\triangledown} \big)^{\!\Lsh}$.  Therefore claim {\it (1)\/}  is proved.
 \vskip5pt
   {\it (2)} \,  As  $ \big( {\gerC_q}^{\!\!\Lsh} \big)^{\!\triangledown} $     is generated   --- as an algebra ---   by  $ \, {(q \! - \! 1)}^{-1} {\gerC_q}^{\!\!\Lsh} \, {\textstyle \bigcap} \, J_{\scriptscriptstyle {U_q}^{\!\prime}}$,  it is enough to show that the latter space is contained in  $ \gerC_q \, $.  Let, then,  $x' \in {\gerC_q}^{\!\!\Lsh} \, {\textstyle \bigcap} \, J_{\scriptscriptstyle {U_q}^{\!\prime}} \, $. Surely  $\delta_1\big(x'\big) \in (q-1) \, \gerC_q$,  hence  $x' = \delta_1\big(x'\big) + \epsilon\big(x'\big) \in (q-1) \, \gerC_q$.  Therefore
$ {(q-1)}^{-1} x' \in \gerC_q \, $,  q.e.d.  Similarly,  $ \big( {\gerI_q}^{\! !} \big)^{\! \curlyvee}$  is the left ideal of  $ {U_q}^{\!\prime} $  generated by  ${(q \! - \! 1)}^{-1} \, {\gerI_q}^{\! !} \, {\textstyle \bigcap} \, J_{\scriptscriptstyle {U_q}^{\!\prime}}$, thus   --- since  $ \, {U_q}^{\!\prime} \subseteq U_q \, $  ---   we must only prove that  ${(q \! - \! 1)}^{-1} \, {\gerI_q}^{\! !} \, {\textstyle \bigcap} \, J_{\scriptscriptstyle {U_q}^{\!\prime}}$  is contained in  $ U_q$.  Again, if
$ y' \in {\gerI_q}^{\! !} \, {\textstyle \bigcap} \, J_{\scriptscriptstyle {U_q}^{\!\prime}} \, $  then  $ \, y'
= \delta_1\big(y'\big) + \epsilon\big(y'\big) \in (q-1) \, \gerI_q $.  Thus we get  $ \, {(q-1)}^{-1} y' \in \gerI_q \, $,  and  {\it (2)\/}  is proved.
\end{proof}

\vskip7pt

\noindent {\bf Remarks:}
\begin{itemize}
\item[(a)] \,  By repeated applications of the previous proposition it is easily proved that:
\[
 {\calI_q}^{\!\curlyvee} = \Big(\! \big( {\calI_q}^{\!\curlyvee} \big)^! \Big)^{\!\!\curlyvee} \;\; ,  \quad  {\calC_q}^{\!\!\triangledown} = \Big(\! \big( {\calC_q}^{\!\!\triangledown} \big)^{\!\Lsh} \Big)^{\!\!\triangledown} \;\; ,  \quad  {\gerC_q}^{\!\!\Lsh} = \Big(\! \big( {\gerC_q}^{\!\!\Lsh} \,\big)^{\!\triangledown} \Big)^{\!\!\Lsh} \;\; ,  \quad  {\gerI_q}^{\! !} = \Big(\! \big( {\gerI_q}^{\! !} \,\big)^{\! \curlyvee} \Big)^{\! !}
\]
\item[(b)] \,  Since  we proved  that Drinfeld-like maps always produce  {\sl proper\/}  quantizations, and that proper quantizations specialize to  {\sl coisotropic\/}  subgroups  (cf.~Proposition \ref{coisotropic creed}),  the following holds:
\begin{enumerate}
   \item  \;  if  $ \, \calI_q = \big( \calI_q^{\,\curlyvee} \big)^! \, $ then $\calI_q$ is a proper quantization (of type $\calI$) of a coisotropic subgroup of $G\,$;
   \item  \;  if $ \, \calC_q = {\big( \calC_q^{\,\triangledown} \big)}^{\!\Lsh} \, $  then $\calC_q$ is a proper quantization (of type $\calC$) of a coisotropic subgroup of $G\,$;
   \item  \;  if $ \, \gerI_q = {\big( \gerI_q^{\;!} \big)}^{\!\curlyvee} \, $  then $\gerI_q$ is a proper quantization (of type $\gerI$) of a coisotropic subgroup of $G\,$;
   \item  \;  if $ \, \gerC_q = \big( \gerC_q^{\,\Lsh\,} \big)^{\!\triangledown} \, $  then $\gerC_q$ is a proper quantization (of type $\gerC$) of a coisotropic subgroup of $G\,$.
\end{enumerate}
\item[(c)] \,  Since the whole construction is independent of the existence of real structures  all the above claims hold true in the  {\sl real}  framework as well.
\end{itemize}

\vskip7pt

   Next result reads as a converse of the previous one, holding for Drinfeld maps applied to  {\sl strict\/}  quantizations:

\vskip17pt

\begin{theorem}\label{converse}
 {\ }
                                         \par
\begin{itemize}
  \item[(a)] \;  if $\,\calI_q$ is a strict quantization of a coisotropic subgroup of $G$ then $ \, \calI_q = \big( \calI_q^{\,\curlyvee} \big)^! \, $;
  \item[(b)] \;  if $\,\calC_q$ is a strict quantization of a coisotropic subgroup of $G$ then $\,\calC_q = {\big( \calC_q^{\,\triangledown} \big)}^{\!\Lsh}\,$;
  \item[(c)] \;  if $\,\gerI_q$   is a strict quantization of a coisotropic subgroup of $G$  then $\,\gerI_q = {\big( \gerI_q^{\;!} \big)}^{\!\curlyvee}\,$;
  \item[(d)] \;  if $\,\gerC_q$  is a strict quantization of a coisotropic subgroup of $G$  then $\,\gerC_q = \big( \gerC_q^{\,\Lsh\,} \big)^{\!\triangledown}\,$;
  \item[(e)] \;  The above claims hold true in the  {\sl real}  framework as well.
\end{itemize}
\end{theorem}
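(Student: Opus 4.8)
The plan is to prove, in each of (a)--(d), the inclusion that Proposition \ref{reciprocity} does \emph{not} already give --- ``$\supseteq$'' in (a),(b) and ``$\subseteq$'' in (c),(d) --- by a single device resting on three facts: the twofold Drinfeld image of a strict quantization is again strict; it has the same ``outer'' Drinfeld image as the original object; and a strict quantization can be reconstructed from one intersection datum.

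For the first fact: if $\calI_q$ is strict then $\calI_q^{\curlyvee}$ is strict by Proposition \ref{curlyvee}(2), hence $(\calI_q^{\curlyvee})^{!}$ is strict by Proposition \ref{esclamativo}(2); likewise $(\calC_q^{\triangledown})^{\Lsh}$, $(\gerI_q^{!})^{\curlyvee}$ and $(\gerC_q^{\Lsh})^{\triangledown}$ are strict, using respectively Propositions \ref{triangledown}(2) then \ref{Lsh}(2); \ref{esclamativo}(2) then \ref{curlyvee}(2); \ref{Lsh}(2) then \ref{triangledown}(2). For the second fact: Remark (a) after Proposition \ref{reciprocity} gives $((\calI_q^{\curlyvee})^{!})^{\curlyvee}=\calI_q^{\curlyvee}$, $((\calC_q^{\triangledown})^{\Lsh})^{\triangledown}=\calC_q^{\triangledown}$, $((\gerI_q^{!})^{\curlyvee})^{!}=\gerI_q^{!}$ and $((\gerC_q^{\Lsh})^{\triangledown})^{\Lsh}=\gerC_q^{\Lsh}$. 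For the third --- the crux --- I claim that for a \emph{strict} left ideal and two-sided coideal $\calI_q$ of $F_q$ one has $\calI_q=\calI_q^{\curlyvee}\cap F_q$: here ``$\supseteq$'' is (\ref{easy-incl}), and for ``$\subseteq$'' one writes an element of $\calI_q^{\curlyvee}$ as $\sum_{n\ge1}(q-1)^{-n}a_n$ with $a_n\in J^{\,n-1}\calI_q\subseteq\calI_q$ (since $\calI_q$ is a left ideal), multiplies by $(q-1)^{N}$ to land in $\calI_q\cap(q-1)^{N}F_q=(q-1)^{N}\calI_q$ (iterated strictness together with torsion-freeness of $F_q$), and divides out. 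The same computation --- with $(\calC_q\cap J)^{n}$ in place of $J^{\,n-1}\calI_q$ and using that $\calC_q$ is a subalgebra --- gives $\calC_q=\calC_q^{\triangledown}\cap F_q$ for strict $\calC_q$; and the enveloping-side counterpart $\gerI_q^{!}=U_q'\cap\gerI_q$, $\gerC_q^{\Lsh}=U_q'\cap\gerC_q$ (for strict $\gerI_q$, $\gerC_q$) is precisely the remark already used in the proof of Proposition \ref{connection}.

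These assemble at once. For (a): $(\calI_q^{\curlyvee})^{!}\subseteq\calI_q^{\curlyvee}$ by (\ref{easy-incl}), while by construction the operation $(\cdot)^{!}$ carries any left ideal two-sided coideal of a QUEA into the corresponding $(\cdot)'$--subalgebra (cf.\ Definition \ref{maps}(c) and (\ref{prime})), so $(\calI_q^{\curlyvee})^{!}\subseteq(F_q^{\vee})'=F_q$ by Theorem \ref{GQDP}(b); hence $(\calI_q^{\curlyvee})^{!}\subseteq\calI_q^{\curlyvee}\cap F_q=\calI_q$, and Proposition \ref{reciprocity}(1) gives equality. Case (b) is verbatim the same with $\calC_q,\triangledown,\Lsh$. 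For (c): set $\gerJ_q:=(\gerI_q^{!})^{\curlyvee}$, so $\gerJ_q\subseteq\gerI_q$ (Proposition \ref{reciprocity}(2)), $\gerJ_q$ is strict (above), and $\gerJ_q^{!}=\gerI_q^{!}$ (above); thus $U_q'\cap\gerJ_q=\gerJ_q^{!}=\gerI_q^{!}=U_q'\cap\gerI_q$. Since $U_q=(U_q')^{\vee}=\sum_{n\ge0}(q-1)^{-n}J^{\,n}$ with $J=\Ker(\epsilon\colon U_q'\to\Cqqm)$ (Theorem \ref{GQDP}(b) together with (\ref{check2})), every $x\in\gerI_q$ satisfies $(q-1)^{M}x\in U_q'$ for some $M$, whence $(q-1)^{M}x\in U_q'\cap\gerI_q=U_q'\cap\gerJ_q\subseteq\gerJ_q$; strictness of $\gerJ_q$ then gives $(q-1)^{M}x\in(q-1)^{M}\gerJ_q$, so $x\in\gerJ_q$ by torsion-freeness, i.e.\ $\gerI_q\subseteq\gerJ_q$, and equality follows. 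Case (d) is identical with $\gerC_q,\Lsh,\triangledown$ in place of $\gerI_q,!,\curlyvee$. Claim (e) requires no extra work: Propositions \ref{curlyvee}--\ref{Lsh}, Proposition \ref{reciprocity} and Theorem \ref{GQDP} all hold in the real framework, and the argument uses nothing else.

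The one step that is not formal iterated-strictness bookkeeping --- of the kind already encountered in Propositions \ref{curlyvee} and \ref{triangledown} --- is the enveloping-side reconstruction lemma $\gerI_q^{!}=U_q'\cap\gerI_q$ (and its $\gerC_q$ analogue): here one must combine the two-sided coideal property of $\gerI_q$ with purity of $\gerI_q$ inside $U_q$ over $\Cqqm$ (a consequence of strictness and of $U_q$ being $\Cqqm$--flat) in order to identify $(q-1)^{n}\,U_q^{\otimes n}\cap\sum_{s}U_q^{\otimes(s-1)}\!\otimes\gerI_q\otimes U_q^{\otimes(n-s)}$ with $(q-1)^{n}$ times that submodule. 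Granting this, (a)--(e) follow uniformly.
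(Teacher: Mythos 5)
Your argument is correct, and it takes a genuinely different route from the paper's, most strikingly in cases (a) and (b). There the paper follows the template of \cite{Ga3}, Proposition 4.3: it chooses local parameters adapted to $K$, works with $\Cqqm$--pseudobases, invokes the degree bound of \cite{EK}, Lemma 4.12, and finishes inside an $I$--adic completion $\widehat{\calI}_q$ (resp.\ $\widehat{\calC}_q$). You replace all of this by two observations: first, $\big(\calI_q^{\,\curlyvee}\big)^{!}\subseteq \big({F_q}^{\!\vee}\big)'\cap \calI_q^{\,\curlyvee}$ directly from Definition \ref{maps} and \eqref{prime}, and $\big({F_q}^{\!\vee}\big)'=F_q$ by Theorem \ref{GQDP}(b); second, the ``reconstruction'' identity $\calI_q^{\,\curlyvee}\cap F_q=\calI_q$ (resp.\ $\calC_q^{\,\triangledown}\cap F_q=\calC_q$) for \emph{strict} quantizations, which your iterated-strictness-plus-torsion-freeness computation does establish, since any element of $\calI_q^{\,\curlyvee}$ lands in $\calI_q$ after multiplication by $(q-1)^N$ and $\calI_q\cap(q-1)^N F_q=(q-1)^N\calI_q$. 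Together with Proposition \ref{reciprocity}(1) this gives (a),(b) with no completions, no PBW bases and no degree estimates. For (c),(d) your route is closer to the paper's --- both use $U_q=\big({U_q}^{\!\prime}\big)^{\vee}$ and the identity $\gerI_q^{\;!}=\gerI_q\cap{U_q}^{\!\prime}$ (resp.\ $\gerC_q^{\,\Lsh}=\gerC_q\cap{U_q}^{\!\prime}$) --- but where the paper then proves the congruences $I_{\scriptscriptstyle {U_q}^{\!\prime}}^{\;n}\cap\gerI_q^{\;!}\equiv I_{\scriptscriptstyle \gerI_q^{\;!}}^{\;n}$ modulo powers of $(q-1)$ by a ``geometrical argument'' at $q=1$, you instead invoke strictness of $\big(\gerI_q^{\;!}\big)^{\curlyvee}$ (Propositions \ref{esclamativo}(2) and \ref{curlyvee}(2), the latter applied to the QFA ${U_q}^{\!\prime}$) and conclude by a one-line division, using Remark (a) after Proposition \ref{reciprocity}; this is legitimate and there is no circularity, since everything you cite precedes Theorem \ref{converse}. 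What each approach buys: yours is shorter and exploits more systematically the strictness-preservation of the single Drinfeld-type maps plus Theorem \ref{GQDP}(b), while the paper's is more explicit and self-contained at exactly those points. The only step you should spell out with the same care as the rest is the tensor-power purity identity $\big(\sum_s U_q^{\otimes(s-1)}\otimes\gerI_q\otimes U_q^{\otimes(n-s)}\big)\cap(q-1)^n U_q^{\otimes n}=(q-1)^n\sum_s U_q^{\otimes(s-1)}\otimes\gerI_q\otimes U_q^{\otimes(n-s)}$ (and its $\gerC_q$ analogue): your localization-at-$(q-1)$/flatness sketch does justify it, and the paper itself uses it without further comment, so you are at the paper's level of rigor there.
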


\begin{proof}
 {\it (a)} \,  Let  $\calI_q$ be a strict quantization;   by  Proposition \ref{reciprocity}{\it (1)},  it is enough to prove  $ \, \calI_q \supseteq \big( \calI_q^{\,\curlyvee} \big)^! \, $.  For this we apply the argument used in \cite{Ga3}, Proposition 4.3, to prove that  $ \, F_q \supseteq {\big({F_q}^{\!\vee}\big)}' \, $.
                                                   \par
   We denote by  $L$  the closed, coisotropic, connected subgroup of  $ G^* $  such that
$ \calI_q^{\,\curlyvee} = \gerI_q(L) \, $, as in Proposition \ref{curlyvee}, and with $\gerl $  its Lie algebra.
                                                   \par
   Let  $ \, y' \in \big( \calI_q^{\,\curlyvee} \big)^! \, $.  Then there is  $ \, n \in \N \, $  and  $ \, y^\vee \in \calI_q^{\,\curlyvee} \setminus (q-1) \, \calI_q^{\,\curlyvee} \, $  such that  $ \, y' = {(q-1)}^n y^\vee \, $.
As we have seen strictness of $\calI_q$ implies strictness of $\calI_q^{\,\curlyvee}$ and therefore
$ \, y^\vee \not\in (q-1) \, {F_q}^{\!\vee} \, $,  and so for  $ \, \overline{y^\vee} := y^\vee \! \mod (q-1) \, {F_q}^{\!\vee}$  we have  $ \, \overline{y^\vee} \not= 0 \in {F_q}^{\!\vee}{\Big|}_{q=1} \!\! = \, U\big(\gerg^*\big) \, $.
                                                   \par
   As  $ {F_q}^{\!\vee} $  is a quantization of  $ U \big( \gerg^* \big)$, we can pick an ordered basis
$ \, {\{b_\lambda\}}_{\lambda \in \Lambda} \, $  of  $ \gerg^*$, and a subset  $ \, {\big\{ x^\vee_\lambda \big\}}_{\lambda \in \Lambda} \, $  of  $ \, {(q-1)}^{-1} J_{\scriptscriptstyle F_q} \, $  so that  $ \, x^\vee_\lambda  \hskip-1,4pt  \mod (q-1) \, {F_q}^{\!\vee} = b_\lambda \, $  for all  $\lambda \in \Lambda\,$;
therefore  $ \, x^\vee_\lambda = {(q-1)}^{-1} x_\lambda \, $  for some  $ \, x_\lambda \in J_{\scriptscriptstyle F_q} \, $,
for all  $ \lambda$  (like in the proof of  \cite{Ga3} Proposition 4.3).  In addition, we choose now the basis and its lift so that a subset  $ \, {\{b_\theta\}}_{\theta \in \Theta} \, $   (for some suitable  $ \, \Theta \subseteq \Lambda \, $)  is a basis of  $ \gerl \, $,  and, correspondingly,  ${\big\{ x^\vee_\theta \big\}}_{\theta \in \Theta} \subseteq \calI_q^{\,\curlyvee} \, $.  Since
$\overline{y^\vee} \not= 0 \in {F_q}^{\!\vee}{\big|}_{q=1} \!\! = \, U\big(\gerg^*\big) \, $,  by the Poincar\'e-Birkhoff-Witt theorem there is a non-zero polynomial  $ \, P \big( {\{ b_\theta \}}_{\theta \in \Theta} \big) \, $  in the  $ b_\theta $'s  such that  $ \; \overline{y^\vee} = P \big( {\{ b_\theta \}}_{\theta \in \Theta} \big) \, $,  hence
  \[
  y^\vee - P \big( {\big\{ x^\vee_\theta \big\}}_{\theta \in \Theta} \big)  \; \in \;  \calI_q^{\,\curlyvee} \,{\textstyle \bigcap}\; (q-1) \, {F_q}^{\!\vee}  \, = \,  (q-1) \; \calI_q^{\,\curlyvee}  \quad .
\]
This implies  $y^\vee = P \big( {\big\{ x^\vee_\theta \big\}}_{\theta \in \Theta} \big) + {(q-1)}^\nu \, y^\vee_1 \,$  for some  $\nu \in \N_+ $  where  $y^\vee_1 \in \calI_q^{\,\curlyvee} \setminus (q-1) \, \calI_q^{\,\curlyvee}$.
                                                   \par
   One can see, like in \cite{EK}, Lemma 4.12,   that the polynomial  $ P $  has degree not greater than  $ n \, $.  Thus
$y' = {(q \! - \! 1)}^n y^\vee = {(q \! - \! 1)}^n P \big( {\big\{ x^\vee_\theta \big\}}_{\theta \in \Theta} \big) + {(q \! - \! 1)}^{n+\nu} y^\vee_1 \, $,  and
\[
  {(q \! - \! 1)}^n P \big( \big\{ x^\vee_\theta \big\}_{\theta \in \Theta} \big) = {(q \! - \! 1)}^n P \big( {\big\{ {(q \! - \! 1)}^{-1} \, x_\theta \big\}}_{\theta \in \Theta} \big) \in \calI_q
\]
by a degree argument.  But now, Proposition \ref{reciprocity} gives  $ \, \calI_q \subseteq \big( \calI_q^{\,\curlyvee} \big)^!$.  Then
  \[
  y'_1 \, := \, y' - {(q \! - \! 1)}^n P \big( {\big\{ x^\vee_\mu \big\}}_{\theta \in \Theta} \big) \in \big( \calI_q^{\,\curlyvee} \big)^!  \;\quad  \text{and}  \quad\;  y'_1 \, = \, {(q \! - \! 1)}^{n+\nu} y^\vee_1 = {(q \! - \! 1)}^{n_1} y^\vee_1
\]
where  $n_1 := n + \nu > n$,  and  $y^\vee_1 \in \calI_q^{\,\curlyvee} \setminus (q-1) \, \calI_q^{\,\curlyvee}$.  We can then repeat the construction, with  $ y'_1 $  instead of  $ y' $,
 $ n_1 $  instead of  $ n$,  etc.: iterating, we find an increasing sequence of numbers ${\big\{ n_s \big\}}_{s \in \N}$  (with  $n_0 := n $)  and a sequence of polynomials  ${\big\{ P_s \big( {\{ X_\theta \}}_{\theta \in \Theta} \big) \big\}}_{s \in \N}$  (again  $P_0 := P$)  such that the degree of $ P_s \big( {\{ X_\theta \}}_{\theta \in \Theta} \big) $  is at most
$ n_s$,  and the formal identity  $ \, y'=\sum_{s \in \N} {(q-1)}^{n_s} P_s \big( {\big\{ x^\vee_\theta \big\}}_{\theta \in \Theta} \big) \, $  holds.
                                                   \par
   Now set  $I_n := \sum_{k=1}^n {(q-1)}^{n-k} \, {\calI_q}^{\!k}$  (for all  $ n \in \N$),  and let  $\widehat{\calI}_q$  be the topological completion of  $\calI_q$  with respect to
the filtration provided by the  $ I_n $'s.  Then, by construction,
${(q \! - \! 1)}^{n_s} P_s \big( {\big\{ x^\vee_\theta \big\}}_{\theta
\in \Theta} \big) \in I_n \, $  for all  $ s \! \in \! \N \, $. This yields
  $$  {\textstyle \sum\limits_{s \in \N}} {(q \! - \! 1)}^{n_s}
P_s \big( {\big\{ x^\vee_\theta \big\}}_{\theta \in \Theta} \big)
\, \in \, \widehat{\calI}_q  \qquad  \text{and}  \qquad
 y' = {\textstyle \sum\limits_{s \in \N}} {(q \! - \! 1)}^{n_s} P_s \big({\big\{ x^\vee_\theta \big\}}_{\theta \in \Theta} \big)  $$
where the last is an identity in  $ \widehat{\calI}_q \, $. Thus  $ \, y' \in \big(
\calI_q^{\,\curlyvee} \big)^! \bigcap \, \widehat{\calI}_q \, $.
Again with the same arguments as in \cite{Ga3}, we see that  $ \; \calI_q
\bigcap {(q \! - \! 1)}^\ell \, \widehat{\calI}_q = {(q \! - \! 1)}^\ell
\, \calI_q \; $  for any  $\ell \in \N\,$.  This together with
$ \; y' \in \big( \calI_q^{\,\curlyvee} \big)^! \bigcap \,
\widehat{\calI}_q \; $  give  $ \, y' = {(q-1)}^{-m} \eta \, $
for some  $ m \in \N$  and  $ \, \eta \in \calI_q \,$;
thus
\[
 \eta = {(q \! - \! 1)}^m \, y' \in \calI_q \, {\textstyle \bigcap} \,
{(q \! - \! 1)}^m \, \widehat{\calI}_q = {(q \! - \! 1)}^m \, \calI_q\, ,
\]
 whence  $y' \in \calI_q$,  q.e.d.

\vskip5pt

   {\it (b)} \,  Assume that  $ \calC_q $  is a strict quantization;
by  Proposition \ref{reciprocity}{\it (2)},  it is enough to prove  $ \, \calC_q \supseteq {\big( \calC_q^{\,\triangledown} \big)}^{\!\Lsh}$.  To do that, we resume the argument used in \cite{Ga3}, Proposition 4.3, to show that  $F_q \supseteq {\big({F_q}^{\!\vee}\big)}' $.
                                                   \par
   We denote by  $L$  the closed, coisotropic, connected subgroup of  $ G^* $  such that
$ \calC_q^{\,\triangledown} = \gerC_q(L)$  and with $ \gerl $  its  Lie algebra.
                                                   \par
   Let  $c' \in {\big( \calC_q^{\,\triangledown} \big)}^{\!\Lsh}$.  Then there exist  $n \in \N$  and  $c^\vee\in \calC_q^{\,\triangledown} \setminus (q-1) \, \calC_q^{\triangledown} $  such that
$c' = {(q-1)}^n c^\vee$.  Note that  strictness of $\calC_q$ implies strictness of
$\calC_q^{\, \triangledown}$;  hence
$ c^\vee \not\in (q-1) \, {F_q}^{\!\vee} $, so that for  $\overline{c^\vee} := c^\vee \! \mod (q-1) \, {F_q}^{\!\vee}$  we have  $ \overline{c^\vee} \not= 0 \in {F_q}^{\!\vee}{\big|}_{q=1} \!\! = \, U\big(\gerg^*\big)$.  Moreover,  $ \; \overline{c^\vee} \in \calC_q^{\,\triangledown}{\big|}_{q=1} \! = \gerC(L) = U(\gerl) \subseteq U\big(\gerg^*\big)$.
                                                   \par
   Since  $ {F_q}^{\!\vee} $  is a quantization of  $ U \big( \gerg^* \big)$, we can fix an ordered basis  $ \, {\{ b_\lambda \}}_{\lambda \in \Lambda} \, $  of  $ \gerg^* \, $,  \, and a subset
$ {\big\{ x^\vee_\lambda \big\}}_{\lambda \in \Lambda} $  of
$ {(q-1)}^{-1} J_{\scriptscriptstyle F_q} $  such that  \hbox{$ x^\vee_\lambda  \hskip-1,4pt  \mod (q-1) \, {F_q}^{\!\vee} = b_\lambda$}  for all  $\lambda \in \Lambda$;  so
$ x^\vee_\lambda = {(q-1)}^{-1} x_\lambda$  for some  $ x_\lambda \in J_{\scriptscriptstyle F_q}$,  for all  $ \lambda$  (as  in the proof of  \cite{Ga3} Proposition 4.3). We can choose both the basis and its lift so that a subset  $ {\{b_\mu\}}_{\mu \in M} $      is a basis of  $ \gerl  $ (here $M \subseteq \Lambda $),  and, correspondingly,
$ {\big\{ x^\vee_\mu \big\}}_{\mu \in M} \subseteq {(q \! - \! 1)}^{-1} J_{\scriptscriptstyle F_q} \bigcap \, \calC_q^{\,\triangledown}$.  Since  $ \overline{c^\vee} \not= 0 \in {F_q}^{\!\vee}{\Big|}_{q=1} \!\! = \, U\big(\gerg^*\big) \, $, by the Poincar\'e-Birkhoff-Witt theorem there exists a non-zero polynomial  $ P \big( {\{ b_\mu \}}_{\mu \in M} \big)$  in variables
$ b_\mu $'s  such that $\overline{c^\vee} = P \big( {\{b_\mu\}}_{\mu \in M} \big)$, hence:
  \[
c^\vee - P \big( {\big\{ x^\vee_\mu \big\}}_{\mu \in M} \big)
\; \in \;  \calC_q^{\,\triangledown} \,{\textstyle \bigcap}\;
(q-1) \, {F_q}^{\!\vee}  \, = \,  (q-1) \;
\calC_q^{\,\triangledown}  \, .
 \]
Therefore,  $ c^\vee = P \big( {\big\{ x^\vee_\mu \big\}}_{\mu \in M} \big) + {(q-1)}^\nu \, c^\vee_1$  for some  $\nu \in \N_+$  where  $c^\vee_1 \in \calC_q^{\,\triangledown} \setminus (q-1) \, \calC_q^{\,\triangledown}$.
                                                   \par
   Now, we can see   --- like in \cite{EK}, Lemma 4.12 ---   that the degree of $P$  is not greater than $ n \, $.  Then
 \[
 c' = {(q-1)}^n c^\vee = {(q-1)}^n P \big( {\big\{ x^\vee_\mu \big\}}_{\mu \in M} \big) + {(q-1)}^{n+\nu} c^\vee_1
 \]
 with  $ {(q-1)}^n P \big( \big\{ x^\vee_\mu \big\}_{\mu \in M} \big) = {(q-1)}^n P \big( {\big\{ {(q-1)}^{-1} \, x_\mu \big\}}_{\mu \in M} \big) \in \calC_q$  because  $ P $  has degree boun\-ded (from above) by  $ n$.  As  $\calC_q \subseteq {\big( \calC_q^{\,\triangledown} \big)}^{\!\Lsh}$,  by  Proposition \ref{reciprocity},  we get
  \[
 c'_1 :=c' - {(q \! - \! 1)}^n P \big( {\big\{ x^\vee_\mu \big\}}_{\mu \in M} \big) \in {\big( \calC_q^{\,\triangledown} \big)}^{\!\Lsh}  \quad  \text{and}  \quad  c'_1 = {(q \! - \! 1)}^{n+\nu} c^\vee_1 = {(q \! - \! 1)}^{n_1} c^\vee_1
\]
with  $ n_1 := n + \nu > n$,  \, and  $c^\vee_1 \in \calC_q^{\,\triangledown} \setminus (q-1) \, \calC_q^{\,\triangledown}$.  We can repeat this construction with  $ c'_1 $  in place of  $ c' $,
$ n_1 $ in place of  $ n $,  etc.. Iterating, we get an increasing
sequence  of numbers ${\big\{ n_s \big\}}_{s \in \N}$  ($ n_0 \! := \! n \, $)  and a sequence of polynomials
 ${\big\{ P_s \big( {\{ X_\mu \}}_{\mu \in M} \big) \big\}}_{s \in \N}$  ($ P_0 \! := \! P$)  such that the degree of
$ P_s \big( {\{ X_\mu \}}_{\mu \in M} \big) $  is at most  $ n_s$,  and
$c' = \sum_{s \in \N} {(q-1)}^{n_s} P_s \big( {\big\{ x^\vee_\mu \big\}}_{\mu \in M} \big)$.
                                                   \par
   Consider
  $$  I_{\calC_q} := \Ker\,\Big( \calC_q \, \stackrel{\epsilon}{\relbar\joinrel\twoheadrightarrow} \, \Cqqm \,\stackrel{ev_1}{\relbar\joinrel\twoheadrightarrow }\, \C \Big) = \Ker\,\Big( \calC_q \,\stackrel{ev_1}{\relbar\joinrel\twoheadrightarrow}\, \calC_q \big/ (q-1) \, \calC_q \,\stackrel{\bar{\epsilon}}{\relbar\joinrel\twoheadrightarrow}\, \C \Big)  $$
   By construction, we have  $ \, {(q-1)}^{n_s} P_s \big( {\big\{ x^\vee_\mu \big\}}_{\mu \in M} \big) \in {I_{\calC_q}}^{\hskip-3pt n_s} \, $  for all  $ \, s \in \N \, $;  in turn, this means that  $ \; \sum_{s \in \N} {(q-1)}^{n_s} P_s \big( {\big\{ x^\vee_\mu \big\}}_{\mu \in M} \big) \in \widehat{\calC}_q \, $,  the latter being the  $ I_{\calC_q} $--adic  completion of  $ \calC_q \, $,  and  the formal expression  $ \, c' = \sum_{s \in \N} {(q-1)}^{n_s} P_s \big( {\big\{ x^\vee_\mu \big\}}_{\mu \in M} \big) \, $  is an identity in  $ \widehat{\calC}_q \, $:  therefore  $c' \in {\big( \calC_q^{\,\triangledown} \big)}^{\!\Lsh} \bigcap \, \widehat{\calC}_q $.  Acting as in \cite{Ga3}, again, we see that
$ \calC_q \bigcap {(q-1)}^\ell \, \widehat{\calC}_q = {(q-1)}^\ell \, \calC_q \, $  for all  $ \, \ell \in \N$.  Getting back
to  $c' \in {\big( \calC_q^{\,\triangledown} \big)}^{\!\Lsh} \bigcap \, \widehat{\calC}_q$, we have  $c' = {(q-1)}^{-m} \kappa$  for some  $m \in \N$  and  $\kappa \in \calC_q \; $; thus  $\kappa = {(q \! - \! 1)}^m \, c' \! \in \calC_q \bigcap {(q \! - \! 1)}^m \, \widehat{\calC}_q = {(q \! - \! 1)}^m \, \calC_q$, whence  $c' \in \calC_q$,  q.e.d.

\vskip5pt

   {\it (c)}  Let $\gerI_q$ be a strict quantization:   by Proposition \ref{reciprocity}{\it (2)\/}  it is enough to prove  $\gerI_q \subseteq {\big( \gerI_q^{\;!} \big)}^{\!\curlyvee}$; so given  $y \in \gerI_q $,  we must prove that  $y \in {\big( \gerI_q^{\;!} \big)}^{\!\curlyvee}$.  Recall that  $\gerI_q \subseteq U_q = \big( {U_q}^{\!\prime} \big)^\vee$, the last identity following from  Theorem \ref{GQDP}.  By construction,
  \[
\big( {U_q}^{\!\prime} \big)^\vee  \, = \;  {\textstyle \sum_{n \geq 0}} \, {(q-1)}^{-n} \, I_{\scriptscriptstyle {U_q}^{\!\prime}}^{\;n} \;\; ,  \qquad  I_{\scriptscriptstyle {U_q}^{\!\prime}} \, := \, \big( {U_q}^{\!\prime} \big)^+ + (q-1) \, {U_q}^{\!\prime}
\]
so for  $y \in \gerI_q \subseteq U_q = \big( {U_q}^{\!\prime} \big)^\vee$  there exists  $N \in \N$  such that
\begin{equation}\label{quarantasei}
 y_+  \; := \;  {(q-1)}^N \, y  \; \in \;  I_{\scriptscriptstyle {U_q}^{\!\prime}}^{\;N}  \; \subseteq \;  {U_q}^{\!\prime}
\end{equation}

Strictness of $ \gerI_q \,$,  i.e.~$ \, \gerI_q \bigcap \, (q-1) \, U_q = (q-1) \, \gerI_q \, $,  implies
  $$  \Big(\, {\textstyle \sum\limits_{s=1}^n} {U_q}^{\!\otimes (s-1)} \otimes \gerI_q \otimes {U_q}^{\!\otimes (n-s)} \Big) \,{\textstyle \bigcap}\, \big( {(q \! - \! 1)}^n \, {U_q}^{\!\otimes n} \big) = {(q \! - \! 1)}^n \, \Big(\, {\textstyle \sum\limits_{s=1}^n} {U_q}^{\!\otimes (s-1)} \otimes \gerI_q \otimes {U_q}^{\!\otimes (n-s)} \Big)  $$
for all  $n \in \N_+$; then, by the very definitions, the latter yields  $ \; \gerI_q^{\;!} = \gerI_q \bigcap {U_q}^{\!\prime} \; $.
                                                                             \par
\noindent   If   in (\ref{quarantasei}) $ N = 1$, then  $y_+ = y \in {U_q}^{\!\prime}$,  thus  $ y \in \gerI_q \bigcap {U_q}^{\!\prime} = \gerI_q^{\;!} \, $,  q.e.d. If, instead,  $N > 1$ , then formula (\ref{quarantasei}), along with
$\gerI_q \,\coideal\, U_q $,  yields:
  \begin{equation}\label{quarantasette}
  \delta_n(y_+) \in \Big( {(q-1)}^N \cdot {\textstyle \sum\limits_{s=1}^n} {U_q}^{\!\otimes (s-1)} \otimes \gerI_q \otimes {U_q}^{\!\otimes (n-s)} \Big) \,{\textstyle \bigcap}\, \big( {(q-1)}^n \, {U_q}^{\!\otimes n} \big) \, ,  \quad  \forall \,n \in \N_+
\end{equation}
and since  $ \gerI_q $  is strict, from (\ref{quarantasette}) one gets
\[
\delta_n(y_+) \in {(q \! - \! 1)}^n \, {\textstyle \sum\limits_{s=1}^n} {U_q}^{\!\otimes (s-1)} \otimes \gerI_q \otimes {U_q}^{\!\otimes (n-s)}\qquad \forall\, n\in\N
\]
%
%
which means  $y_+ \in \gerI_q^{\;!}$.  Eventually, we have found  $ \; y_+ \in \gerI_q^{\;!} \bigcap I_{\scriptscriptstyle {U_q}^{\!\prime}}^{\;N} \; $.
                                                                             \par
   Now look at  $ \, I_{\scriptscriptstyle \gerI_{\!q}^{\,!}} := I_{\scriptscriptstyle {U_q}^{\!\prime}} \,{\textstyle \bigcap}\, \gerI_q^{\;!} \, $.  Using the fact that  $ \, {U_q}^{\!\prime} = {U_q(\gerg)}' = F\big[G^*\big] \, $   --- from Theorem \ref{GQDP}  ---   and  $ \, \gerI_q^{\;!} = {\gerI_q(K)}^! = \calI_q(L) \, $  for some  coisotropic subgroup $L$ in $G^*$  --- as granted by Proposition \ref{esclamativo}  ---   and still taking into account strictness, by an easy geometrical argument (via specialization at  $q = 1$)  we see that
  $$  I_{\scriptscriptstyle {U_q}^{\!\prime}}^{\;n} \;{\textstyle \bigcap}\; \gerI_q^{\;!}  \, \equiv \,  I_{\scriptscriptstyle \gerI_{\!q}^{\,!}}^{\;n}  \mod(q-1) \, {U_q}^{\!\prime}  \qquad  \forall \, n \in \N_+ \; .  $$
This, together with  $ \, \gerI_q \bigcap \, (q-1) \, U_q = (q-1) \, \gerI_q \, $,  yields also
\[
  I_{\scriptscriptstyle {U_q}^{\!\prime}}^{\;n} \;{\textstyle \bigcap}\; \gerI_q^{\;!}  \, \equiv \,  I_{\scriptscriptstyle \gerI_{\!q}^{\,!}}^{\;n}  \mod(q-1) \, \gerI_q^{\;!}  \qquad  \forall \, n \in \N_+
\]
Finally, by suitable, iterated cancelation of factors  $ (q-1) $,  which is possible because of the condition  $ \, \gerI_q \bigcap \, (q-1) U_q = (q-1) \, \gerI_q \, $,   we eventually obtain
  \[
I_{\scriptscriptstyle {U_q}^{\!\prime}}^{\;n}\; {\textstyle \bigcap}\; \gerI_q^{\;!}  \, \equiv \,  I_{\scriptscriptstyle \gerI_{\!q}^{\,!}}^{\;n}  \mod {(q-1)}^n \, \gerI_q^{\;!}  \qquad  \forall \, n \in \N_+ \; .
\]
   \indent   To sum up, we have  $y_+ \in I_{\scriptscriptstyle {U_q}^{\!\prime}}^{\;N} \;{\textstyle \bigcap}\; \gerI_q^{\;!} = I_{\scriptscriptstyle \gerI_{\!q}^{\,!}}^{\;N} \; $; therefore, by definitions,
  \[
y = {(q-1)}^{-N} \, y_+  \; \in \;  {(q-1)}^{-N} \, I_{\scriptscriptstyle \gerI_q^{\;!}}^{\;N}  \; \subseteq \;  \big( \gerI_q^{\;!} \big)^{\!\curlyvee}  \, .
\]
 \vskip5pt
   {\it (d)} \,  Let  $\gerC_q$ be a strict quantization: by Proposition \ref{reciprocity}{\it (2)\/}  it is enough to prove  $ \gerC_q \subseteq {\big( \gerC_q^{\,\Lsh\,} \big)}^{\!\triangledown}$. We follow the same arguments  used for claim  {\it (c)}.   Let
$c \in \gerC_q$,  since  $\gerC_q \subseteq U_q = \big( {U_q}^{\!\prime} \big)^\vee$   ---  from  Theorem \ref{GQDP}  ---   and
 $\big( {U_q}^{\!\prime} \big)^\vee  = \,  {\textstyle \sum_{n \geq 0}} \, {(q-1)}^{-n} \, I_{\scriptscriptstyle {U_q}^{\!\prime}}^{\;n}$,
%
%
%
(notation as above) for  $c \in \gerC_q \subseteq U_q = \big( {U_q}^{\!\prime} \big)^\vee$  there exists  $N \in \N$  such that
 $ \; c_+ := {(q-1)}^N \, c \, \in \, I_{\scriptscriptstyle {U_q}^{\!\prime}}^{\;N} \, \subseteq \, {U_q}^{\!\prime} \; $.
%
%
                                                          \par
   Now, strictness of $ \gerC_q $   implies
\[
\big( {U_q}^{\!\otimes (n-1)} \otimes \gerC_q \big) \,{\textstyle \bigcap}\, {(q-1)}^n \, {U_q}^{\!\otimes n} = {(q-1)}^n \, \big( {U_q}^{\!\otimes (n-1)} \otimes \gerC_q \big)   \qquad \forall\; n\in\N_+
\]
hence  $ \, \gerC_q^{\,\Lsh\,} = \gerC_q \bigcap {U_q}^{\!\prime} \, $.  If  the above $N$ is $1$, then  $ \, c_+ = c \in {U_q}^{\!\prime} \, $,  thus  $ \, c \in \gerC_q \bigcap {U_q}^{\!\prime} = \gerC_q^{\,\Lsh\,} \, $,  q.e.d.  If instead  $ \, N > 1 \, $,  then
  \[
  \delta_n(c_+) \, \in \, \big( {(q-1)}^N \cdot {U_q}^{\!\otimes {n-1}} \otimes \gerC_q \big) \,{\textstyle \bigcap}\, \big( {(q-1)}^n \, {U_q}^{\!\otimes n} \big)  \qquad  \forall \, n \in \N_+
\]
and, since  $ \gerC_q $  is strict,  $ \; \delta_n(c_+)\in \, {(q-1)}^n \cdot {U_q}^{\!\otimes {n-1}} \otimes \gerC_q \; $  for all  $ \, n \in \N_+ \, $,  which means  $ \, c_+ \in \gerC_q^{\,\Lsh\,} \, $.  Thus, eventually, we have  $ \; c_+ \in \gerC_q^{\,\Lsh\,} \bigcap I_{\scriptscriptstyle {U_q}^{\!\prime}}^{\;N} \; $.
                                                                             \par
   Let us look, now, at  $ \; I_{\scriptscriptstyle \gerC_q^{\,\Lsh\,}} := I_{\scriptscriptstyle {U_q}^{\!\prime}} \,{\textstyle \bigcap}\, \gerC_q^{\,\Lsh\,} \; $.  Again in force of  strictness of  $\gerC_q\,$,  a geometrical argument (at  $ q = 1$)  as before leads us to
\[
    I_{\scriptscriptstyle {U_q}^{\!\prime}}^{\;n} \;{\textstyle \bigcap}\; \gerC_q^{\,\Lsh\,}  \, \equiv \,  I_{\scriptscriptstyle \gerC_q^{\,\Lsh\,}}^{\;n}  \mod {(q-1)}^n \, \gerC_q^{\,\Lsh\,} \;\; ,  \qquad  \forall \; n \in \N_+
\]
from which we conclude that  $ \; c_+ \in I_{\scriptscriptstyle {U_q}^{\!\prime}}^{\;N} \;{\textstyle \bigcap}\; \gerC_q^{\,\Lsh\,} = I_{\scriptscriptstyle \gerC_q^{\,\Lsh\,}}^{\;N} \; $.  Therefore, by the very definitions,
  \[
  c  \; = \;  {(q-1)}^{-N} \, c_+  \; \in \;  {(q-1)}^{-N} \, I_{\scriptscriptstyle \gerC_q^{\,\Lsh\,}}^{\;N}  \; \subseteq \;  \big( \gerC_q^{\,\Lsh\,} \big)^{\!\triangledown} \;\; ,  \qquad  \text{q.e.d.}
\]
 \vskip5pt
   {\it (e)} \,  This is a direct consequence of claims from {\it (a)\/}  through  {\it (d)}.
 \vskip5pt
   {\it (f)} \,  Once again, this is true because the whole construction is independent of the existence of real structures.
\end{proof}

\smallskip

   It is now time to clarify how the coisotropic subgroup $L$ of $G^*$ is linked to the coisotropic subgroup $K$ of $G\,$. We will give this relation in the weak quantization case first, and show how it improves under stronger hypothesis.

 \eject

\begin{theorem} \label{complimentary} Let  $K$ be a subgroup of $G$, and let
$\calI_q(K)\,$,  $\calC_q(K)\,$,  $\gerI_q(K)$  and  $\gerC_q(K)$  be weak quantizations as in Definition \ref{weakq}.  Then (with notation of Proposition \ref{bot-pro})
\begin{itemize}
  \item[(a)]  \quad  $ {\calI_q(K)}^{\!\curlyvee} = \, \gerI_q \big( K^{\langle \perp \rangle} \big) \; $;
  \item[(b)]  \quad  $ {\calC_q(K)}^{\!\triangledown} = \, \gerC_q \big( K^{\langle \perp \rangle} \big) \; $;
  \item[(c)]  \quad  if  $ \; \gerI_q(K) = {\big( {\gerI_q(K)}^! \,\big)}^{\!\curlyvee} \, $, then
$ {\gerI_q(K)}^{\,!} = \, \calI_q \big( K^{\langle \perp \rangle} \big)$;  in particular, this holds if the quantization  $\gerI_q(K)$  is  {\sl strict};
  \item[(d)]  \quad  if  $ \; \gerC_q =  {\big( {\gerC_q(K)}^\Lsh \,\big)}^{\!\triangledown}$,  then  ${\gerC_q(K)}^{\!\Lsh} = \, \calC_q \big( K^{\langle \perp \rangle} \big)$;  in particular, this holds if the quantization  $\gerC_q(K) $  is  {\sl strict};
   \item[(e)]  \quad  claims (a--d) hold as well in the framework of  {\sl real}  quantum subgroups.
\end{itemize}
\end{theorem}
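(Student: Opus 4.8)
\emph{Strategy.} In each of (a)--(d) the relevant Drinfeld-type map has already been shown, in Propositions \ref{curlyvee}, \ref{triangledown}, \ref{esclamativo}, \ref{Lsh}, to output a proper weak quantization of one of the four algebraic avatars of \emph{some} closed coisotropic subgroup $L$ of $G^{*}$; the whole content of the theorem is to identify $L$. Since such an avatar determines the closed connected subgroup attached to it (via $\mathrm{exp}\circ\mathrm{Prim}$, resp.\ via $\mathrm{Spec}$), and since $K^{\langle\bot\rangle}$ is closed and connected by construction, it is enough to compare \emph{semiclassical limits}. Every object here depends on $K$ only through $\gerk=\mathrm{Lie}(K)$, so we assume $K$ connected throughout; and (e) will be immediate, since every map and every step below is equivariant for the $*$–structures, so the ``real'' halves of Propositions \ref{curlyvee}--\ref{Lsh} transport (a)--(d) verbatim to the real framework.

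\emph{Case (a).} By construction ${\calI_q(K)}^{\!\curlyvee}$ is the left ideal of ${F_q[G]}^{\vee}$ generated by $(q-1)^{-1}\calI_q$, so $\gerI(L)=\pi_{{F_q}^{\!\vee}}\big({\calI_q(K)}^{\!\curlyvee}\big)$ is the left ideal of $U(\gerg^{*})={F_q[G]}^{\vee}\!\big/(q-1){F_q[G]}^{\vee}$ generated by $\pi_{{F_q}^{\!\vee}}\big((q-1)^{-1}\calI_q\big)$. Here one invokes the fine structure of ${F_q[G]}^{\vee}$ underlying the GQDP (\cite{Ga3}, \S 4): the specialization map carries $(q-1)^{-1}J$ onto the cotangent Lie bialgebra $\gerg^{*}=\mathfrak{m}_e\big/\mathfrak{m}_e^{\,2}$, sending $(q-1)^{-1}\varphi$ (for $\varphi\in J$ with $\pi_{F_q}(\varphi)=f$) to $\bar f$. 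Since $\pi_{F_q}(\calI_q)=\calI(K)$ and the image of $\calI(K)$ in $\mathfrak{m}_e\big/\mathfrak{m}_e^{\,2}$ is exactly $\gerk^{\bot}$ — the annihilator of $\gerk=\calI(K)^{\bot}\cap\gerg$ (orthogonality relations \ref{orthoK}), i.e.\ the conormal space to $K$ at $e$ — we get $\gerI(L)=U(\gerg^{*})\cdot\gerk^{\bot}$. Finally $U(\gerg^{*})\cdot\gerk^{\bot}=U(\gerg^{*})\cdot\langle\gerk^{\bot}\rangle$, the non-obvious inclusion following by induction (any iterated bracket of elements of $\gerk^{\bot}$ rewrites as a combination of monomials whose rightmost tensor factor lies in $\gerk^{\bot}$, hence belongs to $U(\gerg^{*})\cdot\gerk^{\bot}$). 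By Proposition \ref{bot-pro}(a) this common value is $U(\gerg^{*})\cdot\gerk^{\langle\bot\rangle}=\gerI\big(K^{\langle\bot\rangle}\big)$, so $L=K^{\langle\bot\rangle}$, which is (a).

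\emph{Cases (b), (c), (d).} Claim (b) is the same computation with ``unital subalgebra generated by $(q-1)^{-1}{(\calC_q)}^{+}$'', where ${(\calC_q)}^{+}=\calC_q\cap J$, in place of the left ideal: $\pi_{{F_q}^{\!\vee}}\big({\calC_q(K)}^{\!\triangledown}\big)$ is then the unital subalgebra of $U(\gerg^{*})$ generated by the image $V$ of ${\calC(K)}^{+}$ in $\gerg^{*}$, which by Poincar\'e--Birkhoff--Witt is $U(\langle V\rangle)$. It remains that $V=\gerk^{\bot}$; this is where the $\calC$–side caveat bites, for $\calC_q(K)$ only sees the observable hull (recall $\calC(K)=\calC(\widehat K)$), and the identification of $V$ with $\gerk^{\bot}$ rests on $G\big/\widehat K$ carrying enough global functions, exactly as in Proposition \ref{triangledown} and the discussion around \ref{correK}; granting it, $\pi_{{F_q}^{\!\vee}}\big({\calC_q(K)}^{\!\triangledown}\big)=U(\langle\gerk^{\bot}\rangle)=U\big(\gerk^{\langle\bot\rangle}\big)=\gerC\big(K^{\langle\bot\rangle}\big)$ by Proposition \ref{bot-pro}(a), proving (b). For (c), assume $\gerI_q(K)=\big({\gerI_q(K)}^{!}\big)^{\!\curlyvee}$ (true for strict $\gerI_q(K)$ by Theorem \ref{converse}(c)), and write ${\gerI_q(K)}^{!}=\calI_q(L)$ for the closed connected coisotropic $L\subseteq G^{*}$ of Proposition \ref{esclamativo}. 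Applying (a) \emph{over} $G^{*}$ — to the QFA ${U_q(\gerg)}'$, a quantization of $F[G^{*}]$ whose $\vee$ is $U_q(\gerg)$ by Theorem \ref{GQDP} — gives ${\calI_q(L)}^{\!\curlyvee}=\gerI_q\big(L^{\langle\bot\rangle}\big)$, while the hypothesis gives ${\calI_q(L)}^{\!\curlyvee}=\big({\gerI_q(K)}^{!}\big)^{\!\curlyvee}=\gerI_q(K)$; specializing, $\gerk=\mathrm{Lie}\big(L^{\langle\bot\rangle}\big)=\langle\gerl^{\bot}\rangle=\gerl^{\bot}$ (Proposition \ref{bot-pro}(a), then coisotropy of $L$ via condition (C-ii)). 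Taking $\bot$ back, $\gerl=\gerk^{\bot}$, so $\mathrm{Lie}\big(K^{\langle\bot\rangle}\big)=\langle\gerk^{\bot}\rangle=\langle\gerl\rangle=\gerl$, i.e.\ $L=K^{\langle\bot\rangle}$, which is (c). Case (d) follows from (b) in exactly the same way, the maps $\gerC_q\mapsto{\gerC_q}^{\!\!\Lsh}$ and $\calC_q\mapsto{\calC_q}^{\!\!\triangledown}$ and Propositions \ref{Lsh}, \ref{converse}(d) replacing the corresponding data, with the observability caveat re-entering as in (b).

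\emph{Main obstacle.} The technical heart is the semiclassical identification $\pi_{{F_q}^{\!\vee}}\big((q-1)^{-1}\calI_q\big)=\gerk^{\bot}$ inside $U(\gerg^{*})$, which rests on the precise structure of ${F_q[G]}^{\vee}$ and of its specialization from \cite{Ga3} — notably that $(q-1)^{-1}J$ specializes \emph{onto} the cotangent space, and onto nothing larger — together with the elementary but essential fact that the left ideal (resp.\ unital subalgebra) of $U(\gerg^{*})$ generated by a subspace $V\subseteq\gerg^{*}$ only depends on the Lie subalgebra $\langle V\rangle$. The secondary delicate point is the observability bookkeeping in (b) and (d): one must make peace with $\calC_q(K)$ intrinsically quantizing $\widehat K$, which is handled exactly as the parallel caveats running throughout Section \ref{quantizations}.
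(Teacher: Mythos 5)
Your proposal is correct and follows essentially the same route as the paper's own proof: for (a)--(b) you identify the semiclassical image of the generators $(q-1)^{-1}\calI_q$, resp.\ $(q-1)^{-1}{(\calC_q)}^{+}$, with $\gerk^{\perp}\subseteq\gerg^{*}$ (the paper does exactly this via the adapted parameters $\{j_1,\dots,j_n\}$ and the isomorphism $\sigma:\gerg^{*}\cong\gert$ taken from \cite{Ga3}) and then pass to $\langle\gerk^{\perp}\rangle=\gerk^{\langle\perp\rangle}$ as in Proposition \ref{bot-pro}, while (c)--(d) are obtained, just as in the paper, by applying (a)--(b) over $G^{*}$ through Theorem \ref{GQDP}, the hypothesis $\gerI_q=\big({\gerI_q}^{!}\big)^{\!\curlyvee}$ (resp.\ its $\gerC$--analogue), and coisotropy of $K$ and $L$, with (e) following since everything is compatible with the $*$--structures. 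The only minor differences --- your explicit induction showing $U(\gerg^{*})\cdot\gerk^{\perp}=U(\gerg^{*})\cdot\langle\gerk^{\perp}\rangle$, and your ``granting'' of the observability identification in (b), which the paper handles at the same level of detail by reducing to $K=\widehat{K}$ as in Proposition \ref{triangledown} --- do not change the argument.
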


\begin{proof}
 {\it (a)}  By  Proposition \ref{curlyvee}  we already have  ${\calI_q(K)}^{\!\curlyvee} \! = \gerI_q(L)$  for some  subgroup
$ L\subseteq G^*$.  In order to show that $ L = K^{\langle \perp\rangle} $,   we will proceed much like in the proof of ${F_q}^{\!\vee} \Big/ (q-1) \, {F_q}^{\!\vee} \cong U(\gerg^*)$,  as given in \cite{Ga3}, Theorem 4.7.
                                          \par
   Let us fix a subset  $\{ j_1, \dots, j_n \}$  of  $ J $ adapted to $K$  as in the proof of  Proposition \ref{curlyvee}.  Let $J^\vee := {(q\!-\!1)}^{-1} J \subset {F_q}^{\!\vee}$  and
$ j^{\,\vee} := {(q-1)}^{-1} j$  for all  $ j \in J $.  From the discussion in that proof, we argue also that  $\big\{ {(q-1)}^{-|\underline{e}|} j^{\,\underline{e}} \mod (q-1) \, {F_q}^{\!\vee} \;\big|\; \underline{e} \in \N^{\,n} \,\big\}$,  where  $j^{\,\underline{e}} = \prod_{s=1}^n j_s^{\,\underline{e}(i)}$, is a  $ \C $--basis  of  $ {F_1}^{\!\vee} $, and  $\big\{ j_1^{\,\vee}, \dots, j_n^{\,\vee} \big\}$  is a  $ \C $--basis  of  $\gert = J^\vee \mod (q-1) \, {F_q}^{\!\vee}$.
                                              \par
   Now,  $j_\mu \, j_\nu - j_\nu \, j_\mu \in (q \! - \! 1) \, J$  (for  $\mu, \nu \in \{1,\dots,n\}$)  implies that:
  \[
  j_\mu \, j_\nu - j_\nu \, j_\mu \; = \; (q - 1) \, {\textstyle \sum_{s=1}^n} \, c_s \, j_s \, + \, {(q - 1)}^2 \, \gamma_1 \, + \, (q - 1) \, \gamma_2
\]
for some  $c_s \in \Cqqm$,  $\gamma_1 \in J$  and  $\gamma_2 \in J^2 $.  Therefore
\[
  \big[ j_\mu^\vee, j_\nu^\vee \,\big] := j_\mu^\vee \, j_\nu^\vee - j_\nu^\vee \, j_\mu^\vee = {\textstyle \sum_{s=1}^n} \, c_s \, j_s^\vee + \gamma_1 + {(q\!-\!1)} \, \gamma_2^\vee \equiv {\textstyle \sum_{s=1}^n} \, c_s \, j_s^\vee \mod \, (q\!-\!1) \, {F_q}^{\!\vee}
\]
(where we posed  $\gamma_2^\vee := {(q-1)}^{-2} \gamma_2 \in {(q-1)}^{-2} {\big( J^\vee \big)}^2 \subseteq {F_q}^{\!\vee}$)  thus  the subspace $\gert := J^\vee \! \mod (q-1) \, {F_q}^{\!\vee}$  is a Lie subalgebra of  $ {F_1}^{\!\vee}$.  But then it should be  ${F_1}^{\!\vee} \cong U(\gert)$  as Hopf algebras, by the above description of  $ {F_1}^{\!\vee} $  and PBW theorem.
                                             \par
   Now for the second step.  The specialization map  $\pi^\vee \colon \, {F_q}^{\!\vee} \relbar\joinrel\twoheadrightarrow {F_1}^{\!\vee} = U(\gert)$  actually restricts to  $\eta \, \colon\, J^\vee \twoheadrightarrow  \gert = J^\vee \Big/ J^\vee \bigcap {\big( (q\!-\!1) \, {F_q}^{\!\vee} \big)} = J^\vee \Big/ \big( J + J^\vee J \big)$,  because  $ J^\vee \bigcap {\big( (q\!-\!1) \, {F_q}^{\!\vee} \big)} = J^\vee \bigcap {(q\!-\!1)}^{-1} {I_{\scriptscriptstyle F_q}}^{\hskip-3pt 2} = J + J^\vee J $.  Also, multiplication by  $ {(q\!-\!1)}^{-1} $  yields a  $ \Cqqm $--module  isomorphism  $ \mu \, \colon \, J \, {\buildrel \cong \over {\lhook\joinrel\relbar\joinrel\relbar\joinrel\twoheadrightarrow}} \, J^\vee $.
Let  $\rho \, \colon \, \mathfrak{m}_e \relbar\joinrel\twoheadrightarrow \mathfrak{m}_e \big/ {\mathfrak{m}_e}^{\!2} = \gerg^* $  be the natural projection map, and  $\nu \, \colon \, \gerg^* \lhook\joinrel\longrightarrow \mathfrak{m}_e $  a section of  $ \rho$.  The specialization map  $\pi \, \colon \, F_q \relbar\joinrel\twoheadrightarrow F_1$  restricts to  a map $\pi' \colon \, J \relbar\joinrel\relbar\joinrel\twoheadrightarrow J \big/ (J \bigcap \, (q-1) \, F_q) = \mathfrak{m}_e$.  Let's fix a section  $\gamma \, \colon \, \mathfrak{m}_e \! \lhook\joinrel\relbar\joinrel\rightarrow J$  of  $ \pi'$ and consider the composition   $\sigma := \eta \circ \mu \circ \gamma \circ \nu \, \colon \, \gerg^* \longrightarrow \gert$:  this is a well-defined Lie bialgebra morphism, independent of the choice of  $ \nu $  and  $ \gamma$.
                                                                   \par
   In the proof of  Proposition \ref{curlyvee}  we made a particular choice for the subset  $\{ j_1, \dots, j_n \}$.  As a consequence, the above analysis to prove that  $ \, \sigma \, \colon \, \gerg^* \cong \gert$  shows also that the left ideal  ${\calI_1}^{\!\curlyvee} := {\calI_q}^{\! \curlyvee} \! \mod (q\!-\!1) \, {F_q}^{\!\vee}$  of  $ U(\gert) $  is generated by
\[
\eta \big( {\calI_q}^{\!\curlyvee} \big) = (\eta \circ \mu) \big( \calI_q \big) = (\sigma \circ \rho \circ \pi) \big( \calI_q \big) = \sigma \big( \rho(\calI\,) \big) = \sigma \big( \gerk^\perp \big) \, .
\]
  So  ${\calI_1}^{\!\curlyvee} = U(\gerg^*) \cdot \gerk^\perp = U(\gerg^*) \cdot \langle \gerk^\perp \rangle = \gerI\big(K^{\langle \perp \rangle}\big)$   --- where we are identifying  $ \gerg^* $  with its image via  $ \sigma $  ---   which eventually means  $\gerl = \langle \gerk^\perp \rangle$.
 \vskip5pt
   {\it (b)} By  Proposition \ref{triangledown}  we have  ${\calC_q(K)}^{\!\triangledown} \! = \gerC_q(L)$  for some  coisotropic subgroup $ L$ in $G^*$. We must prove that  $L = K^{\langle \perp \rangle}$.  Once again, we mimic the procedure of the proof of Proposition \ref{triangledown}, and we fix a subset  $\big\{ j_1, \dots, j_n \big\}$  of  $ J $  as in the proof of  such Proposition.  Then, tracking the analysis we did there to prove that  $\sigma \, \colon \, \gerg^* \cong \gert$,  we see also that the unital subalgebra  ${\calC_1}^{\!\!\triangledown} := {\calC_q}^{\!\!\triangledown} \! \mod (q\!-\!1) \, {F_q}^{\!\vee}$  of  $ U\big(\gerg^*\big) $  is generated by  $\eta \big( {\calC_q}^{\!\!\triangledown} \big) = (\mu \circ \eta) \big( \calC_q \big) = (\sigma \circ \rho \circ \pi) \big( \calC_q \big) = \sigma \big( \rho(\calC\,) \big) = \sigma \big( \gerk^\perp \big)$.  Thus  $ {\calC_1}^{\!\!\triangledown} $  is the subalgebra of  $U\big(\gerg^*\big) $  generated by  $ \gerk^\perp $,  hence  ${\calC_1}^{\!\!\triangledown} = {\big\langle \gerk^\perp \big\rangle}_{\!\text{\it Alg}} = U \big( {\big\langle \gerk^\perp \big\rangle}_{\! \text{\it Lie}} \big) = U \big( \gerk^{\langle \perp \rangle} \big) = \gerC \big( K^{\langle \perp \rangle} \big) \, $,  which means  $ \, \gerl = \langle \gerk^\perp \rangle \, $,  q.e.d.
\vskip5pt
   {\it (c)}  Thanks to  Proposition \ref{esclamativo}  we already know that  ${\gerI_q(K)}^! = \, \calI_q(L)$  for some  coisotropic subgroup $L$ in $G^*$. Again, we must prove that  $L = K^{\langle \perp \rangle}$.  Note that we can assume  $ K $  to be connected, as its relationship with  $ \gerI_q(K) $  passes through  $ \gerk $  alone; thus in the end we simply have to prove that  $\mathfrak{l} := \text{\it Lie}\,(L) = \gerk^{\langle \perp \rangle} = \gerk^\perp$,  \, taking into account that  $\gerk^{\langle \perp \rangle} = \gerk^\perp$  because  $ \gerk $  is coisotropic, by a remark following Proposition \ref{reciprocity}.
                                                                     \par
   By assumption  $\gerI_q(K) = {\big( {\gerI_q(K)}^! \,\big)}^{\!\curlyvee}$;  this and  {\it (a)\/}  together give
  \[
  \gerI_q(K)  \; = \;  {\big( {\gerI_q(K)}^! \,\big)}^{\!\curlyvee}  = \;  {\calI_q(L)}^\curlyvee  \, = \; \gerI_q\big(L^{\langle \perp \rangle}\big)  \; = \;  \gerI_q\big(L^\perp\big)
\]
where  $ L^{\langle \perp \rangle} = L^\perp$  because  $ L $  is coisotropic as well: at  $q = 1 $, this implies  $\gerk = \gerl^\perp$,  q.e.d.
 \vskip5pt
   {\it (d)} We must prove that  $L = K^{\langle \perp \rangle} $: as above we can assume  $K$  to be connected, so we only have to prove that  $\mathfrak{l} := \text{\it Lie}\,(L) = \gerk^{\langle \perp \rangle} = \gerk^\perp$  (as  $ \gerk $  is coisotropic, by Proposition \ref{converse}.
                                                                     \par
   By assumption  $\gerC_q =  {\big( {\gerC_q(K)}^\Lsh \,\big)}^{\!\triangledown}$;  this along with  {\it (c)\/}  gives
  \[
 \gerC_q(K)  \; = \;  {\big( {\gerC_q(K)}^\Lsh \,\big)}^{\!\triangledown}  = \;  {\calC_q(L)}^\triangledown  \, = \; \gerC_q\big(L^{\langle \perp \rangle}\big)  \; = \;  \gerC_q\big(L^\perp\big)
\]
with  $L^{\langle \perp \rangle} = L^\perp$  since  $ L $  is coisotropic too: specializing at  $q = 1$,  this yields  $\gerk = \gerl^\perp$.
 \vskip5pt
   {\it (e)} This is clear again since all arguments pass through unchanged in the real setup.
\end{proof}

\smallskip

\begin{corollary}\label{coro}
 Let  $\calI_q(K) $ and $\calC_q(K)$ be weak quantizations of a (not necessarily) coisotro\-pic subgroup $K$ of $G$, of type $\calI$ and $\calC$ respectively.  Then, with notation of  Definition \ref{coisotropic duality},  we have
  $$  {\big( {\calI_q(K)}^\curlyvee \big)}^! =\calI_q \big(\Kk \big) \;\; ,  \qquad  {\big( {\calC_q(K)}^\triangledown \big)}^{\!\!\Lsh} = \, \calC_q \big(\Kk \big) \;\; .  $$
\end{corollary}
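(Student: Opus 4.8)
The plan is to obtain Corollary~\ref{coro} by \emph{iterating} Theorem~\ref{complimentary}, using the reciprocity relations of Proposition~\ref{reciprocity} to feed the output of the first application into the hypothesis of the second.

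Take first the $\calI$--side. By Theorem~\ref{complimentary}(a) one has $\calI_q(K)^{\curlyvee} = \gerI_q\big(K^{\langle\bot\rangle}\big)$, which by Proposition~\ref{curlyvee}(3) is a proper --- hence weak --- quantization of type $\gerI$ of the coisotropic subgroup $L := K^{\langle\bot\rangle}$ of $G^{*}$. I would then apply Theorem~\ref{complimentary}(c) to the quantum subgroup $\gerI_q(L) = \calI_q(K)^{\curlyvee}$. That theorem requires the self-reproducing identity $\gerI_q(L) = \big(\gerI_q(L)^{!}\big)^{\curlyvee}$; but this holds \emph{automatically} here, since the first Remark after Proposition~\ref{reciprocity} gives $\calI_q^{\curlyvee} = \big((\calI_q^{\curlyvee})^{!}\big)^{\curlyvee}$ for every weak quantization $\calI_q$, and specializing to $\calI_q = \calI_q(K)$ says exactly $\gerI_q(L) = \big(\gerI_q(L)^{!}\big)^{\curlyvee}$. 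Hence Theorem~\ref{complimentary}(c) yields $\big(\calI_q(K)^{\curlyvee}\big)^{!} = \gerI_q(L)^{!} = \calI_q\big(L^{\langle\bot\rangle}\big)$, and Proposition~\ref{bot-pro}(d) identifies $L^{\langle\bot\rangle} = (K^{\langle\bot\rangle})^{\langle\bot\rangle} = \Kk$. This gives the first equality.

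The $\calC$--side runs completely in parallel: Theorem~\ref{complimentary}(b) gives $\calC_q(K)^{\triangledown} = \gerC_q\big(K^{\langle\bot\rangle}\big)$, a proper quantization of type $\gerC$ of the coisotropic $L := K^{\langle\bot\rangle}$ (Proposition~\ref{triangledown}(3)); the Remark after Proposition~\ref{reciprocity} --- now in the form $\calC_q^{\triangledown} = \big((\calC_q^{\triangledown})^{\Lsh}\big)^{\triangledown}$ --- verifies the hypothesis of Theorem~\ref{complimentary}(d) for $\gerC_q(L)$; that theorem then gives $\big(\calC_q(K)^{\triangledown}\big)^{\Lsh} = \gerC_q(L)^{\Lsh} = \calC_q\big(L^{\langle\bot\rangle}\big)$; and $L^{\langle\bot\rangle} = \Kk$ once more by Proposition~\ref{bot-pro}(d). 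Since every ingredient respects the $*$--structure, the statements pass unchanged to the real framework.

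The only genuinely delicate point --- and the one I expect to require a little care --- is the bookkeeping in the first step: one must check that $\calI_q(K)^{\curlyvee}$ (resp.\ $\calC_q(K)^{\triangledown}$) really is a weak quantization of the \emph{right} algebraic type of an \emph{honest} coisotropic (connected) subgroup of $G^{*}$, so that Theorem~\ref{complimentary}(c)/(d) legitimately applies to it. That is precisely what Propositions~\ref{curlyvee}(3) and \ref{triangledown}(3) supply. Once these identifications are in place, the Corollary is the short chain of equalities above; no new computation is needed.
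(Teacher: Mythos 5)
Your proposal is correct and follows essentially the same route as the paper's own proof: first apply Theorem~\ref{complimentary}(a)/(b) to identify $\calI_q(K)^{\curlyvee}=\gerI_q\big(K^{\langle\bot\rangle}\big)$ (resp.\ $\calC_q(K)^{\triangledown}=\gerC_q\big(K^{\langle\bot\rangle}\big)$), then use the idempotency identity from the remark after Proposition~\ref{reciprocity} to verify the hypothesis of Theorem~\ref{complimentary}(c)/(d), and finally invoke Proposition~\ref{bot-pro} to identify $\big(K^{\langle\bot\rangle}\big)^{\langle\bot\rangle}=\Kk$. The only cosmetic difference is that you make explicit the appeal to Propositions~\ref{curlyvee}(3) and \ref{triangledown}(3) (which the paper leaves implicit in the statement of Theorem~\ref{complimentary}), so no gap remains.
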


\begin{proof}
 Theorem \ref{complimentary}{\it(a)\/}  gives  $ \, {\calI_q(K)}^{\!\curlyvee} \! = \gerI_q \big( K^{\langle \perp \rangle} \big) \, $, and  Proposition \ref{reciprocity}  yields
  $$  \Big( \gerI_q \big( K^{\langle \perp \rangle} \big)^! \Big)^{\!\curlyvee} \! = \, \Big( {\big( {\calI_q(K)}^{\!\curlyvee} \big)}^! \Big)^{\!\curlyvee} \! = \; {\calI_q(K)}^{\!\curlyvee} = \; \gerI_q \big( K^{\langle \perp \rangle} \big)  $$
so that  $\Big( \gerI_q \big( K^{\langle \perp \rangle} \big)^! \Big)^{\!\curlyvee} \! = \; \gerI_q \big( K^{\langle \perp \rangle} \big)$.  Then  Theorem \ref{complimentary}  gives
  $$  \gerI_q \big( K^{\langle \perp \rangle} \big)^! = \, \calI_q\big( (K^{\langle \perp \rangle})^{\langle \perp \rangle} \big) \, = \, \calI_q \big(\!\Kk \big)  $$
by  Proposition \ref{bot-pro}.  Therefore  $ {\big( {\calI_q(K)}^\curlyvee \big)}^! = \, \gerI_q \big( K^{\langle \perp \rangle} \big)^! = \, \calI_q \big(\!\Kk \big)$ as claimed.
                                                                       \par
   Similarly,  Theorem \ref{complimentary}{\it (b)\/}  gives  $ \, {\calC_q(K)}^{\!\triangledown} \! = \gerC_q \big( K^{\langle \perp \rangle} \big) \, $,  and the first remark after Proposition \ref{reciprocity}  yields
  $$  \Big( \gerC_q \big( K^{\langle \perp \rangle} \big)^{\!\!\Lsh} \,\Big)^{\!\triangledown} \! = \, \Big( {\big( {\calC_q(K)}^\triangledown \big)}^{\!\!\Lsh} \,\Big)^{\!\triangledown} \! = \; {\calC_q(K)}^\triangledown = \; \gerC_q \big( K^{\langle \perp \rangle} \big)  $$
so that  $ \; \Big( \gerC_q \big( K^{\langle \perp \rangle} \big)^{\!\!\Lsh} \,\Big)^{\!\triangledown} \! = \; \gerC_q \big( K^{\langle \perp \rangle} \big) \; $.  Then again by  Theorem \ref{complimentary}{\it (d)\/}  we get
  $$  \gerC_q \big( K^{\langle \perp \rangle} \big)^{\!\!\Lsh} = \, \calC_q\big( (K^{\langle \perp \rangle})^{\langle \perp \rangle} \big) \, = \, \calC_q \big(\! \Kk \big)  $$
still by  Proposition \ref{bot-pro}.  Thus  ${\big( {\calC_q(K)}^\triangledown \big)}^{\!\!\Lsh} = \, \gerC_q \big( K^{\langle \perp \rangle} \big)^{\!\!\Lsh} = \, \calC_q \big(\! \Kk \big)$  as claimed.
\end{proof}

\medskip

\begin{remark}
 One might guess that the analogue to this Corollary holds true for weak quantizations of type $\gerI$ and $\gerC$ as well: actually, we have no clue about that, in either sense.
\end{remark}

\medskip

  We now consider the ``compatibility''      among different Drinfeld-like maps acting on quantizations of different types over a single pair  {\sl (subgroup, space)}.  Indeed, we show that Drinfeld's functors preserve the subgroup-space correspondence   --- Proposition \ref{correspondence} ---   and the orthogonality correspondence   --- Proposition \ref{orthogonality} ---   (if either occurs at the beginning) between different quantizations as mentioned.

\medskip

\begin{proposition}\label{correspondence}
 Let  $K$  be a closed subgroup of  $ G $,  and let  $\Psi $ and $ \Phi $  be the map mentioned in  \S \ref{subgrps-homspaces}.  Then the following holds:
\begin{itemize}
   \item[(a)] \,  Let  $ \, \calC_q $  and  $ \calI_q $  be as in  Section \ref{quantizations}.  If  $ \; \Psi(\calC_q) = \calI_q \, $,  then  $ \; \Psi \big( {\calC_q}^{\!\!\triangledown} \big) = {\calI_q}^{\!\curlyvee} \; $.
  \item[(b)] \,  Let  $\calI_q $  and  $ \calC_q $  be as in  Section \ref{quantizations}.  If  $ \; \Phi(\calI_q) = \calC_q \, $, then  $ \; \Phi \big({\calI_q}^{\!\curlyvee} \big) = {\calC_q}^{\!\!\triangledown} \; $.
  \item[(c)] \,  Let  $\gerC_q $  and  $ \gerI_q $  be as in  Section \ref{quantizations}.  If  $ \; \Psi(\gerC_q) = \gerI_q \, $,  then  $ \; \Psi \big({\gerC_q}^{\!\!\Lsh} \big) \subseteq {\gerI_q}^{\! !} \; $.
  \item[(d)] \,  Let  $\gerI_q $  and  $ \gerC_q $  be as in  Section \ref{quantizations}.  If  $ \; \Phi(\gerI_q) = \gerC_q \, $,  then  $ \; \Phi \big({\gerI_q}^{\! !} \big) = {\gerC_q}^{\!\!\Lsh} \; $.
\end{itemize}
\end{proposition}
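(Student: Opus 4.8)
The plan is to argue throughout with the concrete descriptions set up in Section~4. Recall that $ {\calI_q}^{\!\curlyvee} $ is the left ideal of $ {F_q}^{\!\vee} $ generated by $ {(q-1)}^{-1}\,\calI_q $, that $ {\gerI_q}^{\,!} $ is cut out of $ \gerI_q $ by the $ \delta_n $--conditions of Definition~\ref{maps}, that $ {\calC_q}^{\!\!\triangledown} $ is the unital $ \Cqqm $--subalgebra of $ {F_q}^{\!\vee} $ generated by $ {(q-1)}^{-1}\,{\calC_q}^{\!+} $, and that $ {\gerC_q}^{\!\!\Lsh} $ is the analogous $ \delta_n $--subspace of $ \gerC_q $; also that $ \Psi(C) = H\,C^+ $ and $ \Phi(I) = H^{\mathrm{co}\,I} $. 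One remark is used in every part: if $ \Phi(X_q) = Y_q $ then $ Y_q^{+} \subseteq X_q $, since applying $ \varepsilon \otimes \mathrm{id} $ to the relation $ \Delta(y) - y \otimes 1 \in H \otimes X_q $ recovers $ y $ for each $ y \in Y_q^{+} $. Hence in (b) and (d) we may freely use $ {\calC_q}^{\!+} \subseteq \calI_q $, resp.\ $ {\gerC_q}^{\!+} \subseteq \gerI_q $; dually, in (a) and (c) the hypothesis reads $ \calI_q = F_q\,{\calC_q}^{\!+} $, resp.\ $ \gerI_q = U_q\,{\gerC_q}^{\!+} $, so again $ {\calC_q}^{\!+} \subseteq \calI_q $ and $ {\gerC_q}^{\!+} \subseteq \gerI_q $.

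\textbf{Parts (a) and (c).} These I would settle by a direct computation with generators. For (a): from $ {(q-1)}^{-1}\calI_q = F_q \cdot {(q-1)}^{-1}{\calC_q}^{\!+} \subseteq \Psi\big({\calC_q}^{\!\!\triangledown}\big) $ and the fact that $ \Psi\big({\calC_q}^{\!\!\triangledown}\big) $ is a left ideal of $ {F_q}^{\!\vee} $, one gets $ {\calI_q}^{\!\curlyvee} \subseteq \Psi\big({\calC_q}^{\!\!\triangledown}\big) $; conversely $ \big({\calC_q}^{\!\!\triangledown}\big)^{+} $ is spanned by $ k $--fold products ($ k \geq 1 $) of elements of $ {(q-1)}^{-1}{\calC_q}^{\!+} $, hence lies in $ \sum_{k \geq 1} {(q-1)}^{-k} J^{\,k-1}\,{\calC_q}^{\!+} \subseteq {\calI_q}^{\!\curlyvee} $ (using $ {\calC_q}^{\!+} \subseteq J $ and $ {\calC_q}^{\!+} \subseteq \calI_q $), so $ \Psi\big({\calC_q}^{\!\!\triangledown}\big) = {F_q}^{\!\vee}\big({\calC_q}^{\!\!\triangledown}\big)^{+} \subseteq {\calI_q}^{\!\curlyvee} $ since $ {\calI_q}^{\!\curlyvee} $ is a left ideal of $ {F_q}^{\!\vee} $ (Proposition~\ref{curlyvee}). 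For (c): it is enough to check $ \big({\gerC_q}^{\!\!\Lsh}\big)^{+} \subseteq {\gerI_q}^{\,!} $ and then invoke that $ {\gerI_q}^{\,!} $ is a left ideal of $ {U_q}' $ (Proposition~\ref{esclamativo}). If $ c \in \big({\gerC_q}^{\!\!\Lsh}\big)^{+} $ then $ c \in {\gerC_q}^{\!+} \subseteq \gerI_q $, while $ \delta_n(c) \in {(q-1)}^{n}\,U_q^{\otimes(n-1)} \otimes \gerC_q $ intersected with $ J^{\otimes n} $ — which always contains $ \delta_n(c) $, by $ \delta_n = {(\mathrm{id}-\varepsilon)}^{\otimes n} \circ \Delta^{n} $ — lands in $ {(q-1)}^{n}\,U_q^{\otimes(n-1)} \otimes {\gerC_q}^{\!+} $ by torsion--freeness of the relevant quotients; since $ {\gerC_q}^{\!+} \subseteq \gerI_q $ this exhibits $ c \in {\gerI_q}^{\,!} $. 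That (c) yields only an inclusion — whereas (a), (b), (d) are equalities — is exactly the shadow of the classical asymmetry in (\ref{correK}): the $ \Phi $--side relations are equalities, but $ \calI \supseteq \Psi(\calC) $ need not be one, and the coisotropic subgroup of $ G^{*} $ produced in Proposition~\ref{Lsh} need not be observable.

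\textbf{Parts (b) and (d): the easy inclusions.} In each case one containment follows at once from the coideal property. Since $ {\calC_q}^{\!\!\triangledown} $ is a left coideal in $ {F_q}^{\!\vee} $ (Proposition~\ref{triangledown}), for $ c \in {\calC_q}^{\!\!\triangledown} $ we have $ \Delta(c) - c \otimes 1 = \big(\mathrm{id} \otimes (\mathrm{id}-\varepsilon)\big)\Delta(c) \in {F_q}^{\!\vee} \otimes \big({\calC_q}^{\!\!\triangledown}\big)^{+} \subseteq {F_q}^{\!\vee} \otimes {\calI_q}^{\!\curlyvee} $, by the computation in (a) together with $ {\calC_q}^{\!+} \subseteq \calI_q $; hence $ {\calC_q}^{\!\!\triangledown} \subseteq \Phi\big({\calI_q}^{\!\curlyvee}\big) $. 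The identical argument with $ {\gerC_q}^{\!\!\Lsh} $ a left coideal in $ {U_q}' $ (Proposition~\ref{Lsh}) and the computation in (c) gives $ {\gerC_q}^{\!\!\Lsh} \subseteq \Phi\big({\gerI_q}^{\,!}\big) $.

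\textbf{Parts (b) and (d): the reverse inclusions — the main obstacle.} The containments $ \Phi\big({\calI_q}^{\!\curlyvee}\big) \subseteq {\calC_q}^{\!\!\triangledown} $ and $ \Phi\big({\gerI_q}^{\,!}\big) \subseteq {\gerC_q}^{\!\!\Lsh} $ are where I expect the real effort to go. For (d) there is a convenient first step: any $ y \in \Phi\big({\gerI_q}^{\,!}\big) \subseteq {U_q}' $ satisfies $ \Delta(y) - y \otimes 1 \in {U_q}' \otimes {\gerI_q}^{\,!} \subseteq U_q \otimes \gerI_q $, hence already $ y \in \Phi(\gerI_q) = \gerC_q $; it remains to upgrade this to the $ \delta_n $--estimate defining $ {\gerC_q}^{\!\!\Lsh} $, which I would obtain by pushing $ y $ and its $ {\gerI_q}^{\,!} $--coinvariance through the iterated coproduct $ \Delta^n $ and combining with $ y \in {U_q}' $, as in the $ \delta_n $--computations proving Propositions~\ref{esclamativo} and~\ref{Lsh}. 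For (b) no such shortcut is available — since $ {\calI_q}^{\!\curlyvee} \supseteq \calI_q $ the inclusion runs the wrong way to deduce coinvariance over $ \calI_q $ — and I would instead import the $ (q-1) $--adic completion and PBW--pseudobasis machinery from the proofs of Proposition~\ref{curlyvee} and Theorem~\ref{complimentary}(a): picking $ \{\,j_1, \dots, j_n\,\} \subseteq J $ adapted to $ K $ gives compatible pseudobases of the completions of $ {F_q}^{\!\vee} $ and of $ {\calC_q}^{\!\!\triangledown} $, and one then shows that an element which is $ {\calI_q}^{\!\curlyvee} $--coinvariant in $ {F_q}^{\!\vee} $ is, in that pseudobasis, supported only on monomials in the generators of $ {\calC_q}^{\!\!\triangledown} $. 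The delicate technical point — and what I would budget the most care for — is keeping track of how the three filtrations in play (the $ \delta_n $--conditions, the $ J $--adic and the $ (q-1) $--adic ones) interact under the mere weak-quantization hypothesis, i.e.\ establishing the needed ``flat intersection'' identities such as $ {(q-1)}^{n} U_q^{\otimes n} \cap \big( U_q^{\otimes(n-1)} \otimes \gerC_q \big) = {(q-1)}^{n}\big( U_q^{\otimes(n-1)} \otimes \gerC_q \big) $; granting these, the two reverse inclusions close up and the proposition follows.
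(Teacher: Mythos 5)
Your preliminary remark (that $\Phi(X_q)=Y_q$ forces $Y_q^{+}\subseteq X_q$), your proofs of (a) and (c), and your ``easy'' inclusions ${\calC_q}^{\!\!\triangledown}\subseteq\Phi\big({\calI_q}^{\!\curlyvee}\big)$ and ${\gerC_q}^{\!\!\Lsh}\subseteq\Phi\big({\gerI_q}^{\! !}\big)$ are correct and essentially what the paper does (it dismisses (a) and (c) as immediate from the definitions and proves the two forward inclusions from the generators $(q-1)^{-1}{\calC_q}^{\!+}$, resp.\ the coideal property of ${\gerC_q}^{\!\!\Lsh}$ together with (c)). The genuine gap is that the substantive halves of (b) and (d), namely $\Phi\big({\calI_q}^{\!\curlyvee}\big)\subseteq{\calC_q}^{\!\!\triangledown}$ and $\Phi\big({\gerI_q}^{\! !}\big)\subseteq{\gerC_q}^{\!\!\Lsh}$, are not proved: you only describe a plan and explicitly defer it (``granting these \dots the proposition follows''). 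Worse, the key identity you say must be established ``under the mere weak-quantization hypothesis'', i.e.\ ${(q-1)}^{n}U_q^{\otimes n}\cap\big(U_q^{\otimes(n-1)}\otimes\gerC_q\big)={(q-1)}^{n}\big(U_q^{\otimes(n-1)}\otimes\gerC_q\big)$, cannot be so established: for $n=1$ it is literally the strictness condition $\gerC_q\cap(q-1)\,U_q=(q-1)\,\gerC_q$ of (\ref{strictquant2}), which fails for general weak quantizations. So the route you sketch for (d) does not close as stated, and likewise your plan for (b) imports the pseudobasis description of the completion of ${\calC_q}^{\!\!\triangledown}$ from Proposition \ref{triangledown}(2), which is obtained there precisely ``because $\calC_q$ is strict'' --- it would at best give (b) in the strict case.

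The paper's actual arguments avoid asking for flatness of $\gerC_q$ or $\calC_q$. For (d): take $\eta\in\Phi\big({\gerI_q}^{\! !}\big)$ with $\epsilon(\eta)=0$; as you note, ${\gerI_q}^{\! !}\subseteq\gerI_q$ already gives $\eta\in\Phi(\gerI_q)=\gerC_q$, and the missing $\delta_n$--estimate is then extracted from the coinvariance relation $\Delta(\eta)-\eta\otimes1\in{U_q}'\otimes{\gerI_q}^{\! !}$ itself: factors from ${U_q}'$ contribute ${(q-1)}^{n-1}$--divisibility through $\delta_{n-1}$, while the last tensor factor lies in ${\gerI_q}^{\! !}\subseteq(q-1)\,\gerI_q$ (the $n=1$ condition in Definition \ref{maps}(c)); combining this with $\delta_n(\eta)\in U_q^{\otimes(n-1)}\otimes\gerC_q$ (from $\eta\in\gerC_q$) yields $\delta_n(\eta)\in{(q-1)}^{n}\,U_q^{\otimes(n-1)}\otimes\gerC_q$, i.e.\ $\eta\in{\gerC_q}^{\!\!\Lsh}$. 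For (b): given $\varphi\in\Phi\big({\calI_q}^{\!\curlyvee}\big)$, clear denominators, $\varphi_+:={(q-1)}^{n}\varphi$, and play the coinvariance $\Delta(\varphi_+)-\varphi_+\otimes1\in{(q-1)}^{n}{F_q}^{\!\vee}\otimes{\calI_q}^{\!\curlyvee}$ against $\Delta(\varphi_+)\in F_q\otimes\calI_q+\calI_q\otimes F_q$ (coideality of $\calI_q$); the explicit description of ${\calI_q}^{\!\curlyvee}$ from Proposition \ref{curlyvee} collapses the intersection to $F_q\otimes\calI_q$, whence $\varphi_+\in\Phi(\calI_q)=\calC_q$, and one concludes with ${(q-1)}^{-n}\calC_q\cap{F_q}^{\!\vee}\subseteq{\calC_q}^{\!\!\triangledown}$. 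Until you carry out arguments of this kind, the proposition is not proved; in particular the two reverse inclusions, which are the whole point of (b) and (d), remain open in your write-up.
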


\begin{proof}
 Claims  {\it (a)\/}  and  {\it (c)\/}  both follow trivially from  definitions.
                                                 \par
   As to claim  {\it (b)},  let  $ \, \eta \in {\calC_q}^{\!\! +} ={\Phi(\calI_q)}^+ \, $, so that  $ \, \Delta(\eta) \in \eta \otimes 1 + F_q \otimes \calI_q \, $.  Then  $ \, \eta^\vee := {(q-1)}^{-1} \eta \, $  enjoys
  $$  \Delta\big(\eta^\vee\big) \in \eta^\vee \! \otimes 1+ F_q \otimes {(q-1)}^{-1} \, \calI_q \subseteq \eta^\vee \! \otimes 1 + {F_q}^{\!\vee} \! \otimes {\calI_q}^{\!\curlyvee}  $$
whence  $ \, \eta^\vee \! \in {\big({F_q}^{\!\vee}\big)}^{\text{\it co}
{\calI_q}^{\!\!\curlyvee}} \! =: \Phi\big({\calI_q}^{\!\curlyvee}\big)
\, $.  Since  $ {\calC_q}^{\!\!\triangledown} $  is generated (as a
subalgebra) by  $ \, {(q-1)}^{-1} \, {\calC_q}^{\!\! +} \, $,  we conclude that  $ \, {\calC_q}^{\!\!\triangledown}
\subseteq \Phi\big({\calI_q}^{\!\curlyvee}\big) \, $.
                                                 \par
   Conversely, let  $ \, \varphi \in \Phi\big({\calI_q}^{\!\curlyvee}\big) \, $.  Then  $ \, \Delta(\varphi) \in \varphi \otimes 1 + {F_q}^{\!\vee}\! \otimes {\calI_q}^{\!\curlyvee} \, $,  and there exists
$ \, n \in \N \, $  such that  $ \, \varphi_+ := {(q-1)}^n \varphi \in \calI_q \, $,  so that
$ \, \Delta(\varphi_+) \in F_q \otimes \calI_q + \calI_q
\otimes F_q \, $  (since  $ \, \calI_q \,\coideal\, F_q \, $).  Then
  $$  \Delta(\varphi_+) \in \big( \varphi_+ \otimes 1 + {(q-1)}^n
{F_q}^{\!\vee} \! \otimes {\calI_q}^{\!\curlyvee} \big) \,{\textstyle \bigcap}\,
\big( F_q \otimes \calI_q + \calI_q \otimes F_q \big)  $$
or equivalently
\begin{equation}
\label{quarantotto}  \Delta(\varphi_+) \, - \, \varphi_+ \otimes 1 \; \in \big(
{(q-1)}^n {F_q}^{\!\vee} \! \otimes {\calI_q}^{\!\curlyvee} \big)
\, {\textstyle \bigcap} \, \big( F_q \otimes \calI_q + \calI_q
\otimes F_q \big)
\end{equation}
Now, the description of  $ {\calI_q}^{\!\curlyvee} $  given in the proof of Proposition \ref{curlyvee} implies that
  $$  \big( {(q-1)}^n {F_q}^{\!\vee} \! \otimes {\calI_q}^{\!\curlyvee}
\big) \, {\textstyle \bigcap} \, \big( F_q \otimes \calI_q + \calI_q
\otimes F_q \big)  \; = \;  F_q \otimes \calI_q  $$
this together with (\ref{quarantotto}) yields  $ \, \Delta(\varphi_+) \in \varphi_+ \otimes 1 + F_q \otimes \calI_q \, $,  hence  $ \, \varphi_+ \in {F_q}^{\text{\it co}\calI_q} =: \Phi(\calI_q) = \calC_q \, $  and so  $ \, \varphi \in {(q-1)}^n \, \calC_q \bigcap {F_q}^{\!\vee} \, $.  On the other hand, the description of  $ {\calC_q}^{\!\!\triangledown} $  in the proof of Proposition \ref{triangledown} implies that  $ \, {(q-1)}^{-n} \, \calC_q \, {\textstyle \bigcap} \, {F_q}^{\!\vee} \subseteq \, {\calC_q}^{\!\!\triangledown} \, $,  hence we get  $ \, \varphi \in {\calC_q}^{\!\!\triangledown} \, $,  q.e.d.
                                          \par
   We finish with claim  {\it (d)}.  For the inclusion
$ \, \Phi \big({\gerI_q}^{\! !} \big) \supseteq {\gerC_q}^{\!\!\Lsh} \, $,  let  $ \, \kappa \in {\gerC_q}^{\!\!\Lsh} \, $.  Since  $ \Phi \big({\gerI_q}^{\! !} \big) $  contains the scalars, we may assume that
$ \, \kappa \in \Ker\,(\epsilon)$, thus  $\Delta(\kappa) = \kappa \otimes 1 + 1 \otimes \kappa + \delta_2(\kappa) \, $.  By  Proposition \ref{Lsh},  we have  $ \, {\gerC_q}^{\!\!\Lsh} \coideal_\ell \, {U_q}^{\!\prime} \, $;  thus  $ \, \Delta(\kappa) - \kappa \otimes 1 = 1 \otimes \kappa + \delta_2(\kappa) \in {U_q}^{\!\prime} \otimes {\gerC_q}^{\!\!\Lsh} \, $,  and more precisely
  $$  \Delta(\kappa) - \kappa \otimes 1 = 1 \otimes \kappa + \delta_2(\kappa) \in {U_q}^{\!\prime} \otimes {\big( {\gerC_q}^{\!\!\Lsh} \,\big)}^+  \;\; .  $$
Since  $ \, {\gerC_q}^{\!\!\Lsh} \subseteq  \Psi \big( {\gerC_q}^{\!\!\Lsh} \big) \subseteq {\gerI_q}^{\! !} \, $,  \, by claim  {\it (c)},  we get
$ \, \Delta(\kappa) - \kappa \otimes 1 \in {U_q}^{\!\prime} \otimes {\gerI_q}^{\! !} \, $,  so  $ \, \kappa \in {\big( {U_q}^{\!\prime} \big)}^{\text{\it co}{\gerI_q}^{\! !}} =: \Phi \big( {\gerI_q}^{\! !} \big) \, $.  Thus  $ \, {\gerC_q}^{\!\!\Lsh} \subseteq \Phi \big( {\gerI_q}^{\! !} \big) \, $.  For the converse inclusion, let  $ \, \eta \in \Phi\big({\gerI_q}^{\! !}\big) \, $;  again, we can assume  $ \, \eta \in \Ker\,(\epsilon) \, $  too.  As  $ \, {\gerI_q}^{\! !} \subseteq \gerI_q \, $,   we get  $ \, \eta \in \Phi\big({\gerI_q}^{\! !}\big) \subseteq \Phi\big(\gerI_q\big) = \gerC_q \, $.  Then  $ \, \delta_n(\eta) \in {U_q}^{\! \otimes n} \! \otimes \gerC_q \, $  for all  $ \, n \in \N_+ \, $,  so
  $$  \delta_n(\eta) \in {(q \! - \! 1)}^n \! \left( {\textstyle
\sum\limits_{s=1}^{n-1}} \, {U_q}^{\!\! \otimes (\!s-1\!)} \! \otimes \gerI_q
\otimes {U_q}^{\!\! \otimes (n-s\!)} \!\right) \! {\textstyle \bigcap} \big( {U_q}^{\!\! \otimes (n-1\!)} \! \otimes \gerC_q \big) \subseteq {(q \! - \! 1)}^n \, {U_q}^{\!\! \otimes (n-1\!)} \! \otimes \gerC_q  $$
hence  $ \, \delta_n(\eta) \in {(q \! - \! 1)}^n \, {U_q}^{\! \otimes (n-1)} \! \otimes \gerC_q \, $  ($ n \! \in \! \N_+ $)  and  $ \, \eta \in \gerC_q \, $,  which means $\eta \in {\gerC_q}^{\!\!\Lsh} \, $.
\end{proof}

\smallskip

\begin{remark}
 The inclusion  $ \, \Psi \big( {\gerC_q}^{\!\!\Lsh} \big) \subseteq {\gerI_q}^{\! !} \, $  of  Proposition \ref{correspondence}{\it (c)\/}  is  {\sl not\/}  an identity in general   --- indeed,  {\sl counterexamples do exist}.
\end{remark}

\medskip

   Finally, we look at what happens when our Drinfeld-like recipes are applied to a pair of quantizations associated with a same subgroup / homogeneous spaces with respect to some fixed double quantization (in the sense of  Section \ref{quantizations}).  The result reads as follows:

\medskip

\begin{proposition}\label{orthogonality}
Let  $ \big( F_q[G] \, , U_q(\gerg)\big)$  be a double quantization of  $(G,\gerg)\,$.  Then:
\begin{itemize}
  \item[(a)] \,  Let  $\calC_q$ and $\gerI_q$  be weak quantizations and assume that  $ \, \calC_q = {\gerI_q}^{\!\perp} \, $  and  $ \, \gerI_q = {\calC_q}^{\!\perp} \, $.  Then  $ \, {\gerI_q}^{\! !} = {\big( {\calC_q}^{\!\!\triangledown} \big)}^\perp \, $  and  $ \, {\calC_q}^{\!\!\triangledown} \subseteq {\big( {\gerI_q}^{\! ! \,} \big)}^\perp \, $.  If, in addition, either one of  $\,\calC_q$  or  $\,\gerI_q$  is  {\sl strict},  then also  $ \, {\calC_q}^{\!\!\triangledown} = {\big( {\gerI_q}^{\! ! \,} \big)}^\perp \, $.
  \item[(b)] \,  Let  $\gerC_q$ and  $\calI_q$ be weak quantizations and assume that  $ \, \calI_q = {\gerC_q}^{\!\perp} \, $  and  $ \, \gerC_q = {\calI_q}^{\!\perp} \, $.  Then  $ \, {\gerC_q}^{\!\!\Lsh} = {\big( {\calI_q}^{\!\curlyvee} \big)}^\perp \, $  and  $ \, {\calI_q}^{\!\curlyvee} \subseteq {\big( {\gerC_q}^{\!\!\Lsh\,} \big)}^\perp \, $.  If, in addition, either one of  $\,\gerC_q$  or   $\,\calI_q$  is  {\sl strict},  then also  $ \, {\calI_q}^{\!\curlyvee} = {\big( {\gerC_q}^{\!\!\Lsh\,} \big)}^\perp \, $.
\end{itemize}
\end{proposition}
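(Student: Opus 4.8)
The plan is to deduce all of (a) and (b) at once from the pairing, working inside the GQDP machinery; I describe (a) in detail, as (b) is entirely parallel (replace $\calC_q \mapsto \calC_q^{\triangledown}$, $\gerI_q \mapsto \gerI_q^{\,!}$ by $\calI_q \mapsto \calI_q^{\curlyvee}$, $\gerC_q \mapsto \gerC_q^{\Lsh}$ and invoke Propositions~\ref{curlyvee} and~\ref{Lsh} in place of~\ref{triangledown} and~\ref{esclamativo}). First I would set the stage: by Theorem~\ref{GQDP}(d) the hypothesis that $\big(F_q[G],U_q(\gerg)\big)$ is a double quantization gives that $F_q^\vee$ and $\,{U_q}'$ are again dual to each other with respect to the induced pairing $\langle\,\ ,\,\rangle\colon F_q^\vee \times {U_q}' \to \Cqqm$, which is therefore $\Cqqm$--valued, non-degenerate after scalar extension to $\Cq$, and perfect upon specialization at $q=1$; all the symbols $(\,\cdot\,)^\perp$ in the statement refer to this pairing (or its restrictions). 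Note that a $\Cqqm$--annihilator is always $(q-1)$--saturated (if $(q-1)z$ annihilates a subset then so does $z$, since $\Cqqm$ is a domain); hence the hypotheses $\calC_q = \gerI_q^{\,\perp}$ and $\gerI_q = \calC_q^{\,\perp}$ already force $\calC_q$ and $\gerI_q$ to be $(q-1)$--saturated, thus --- being weak quantizations --- \emph{strict}. In particular the strictness assumption in the last clause of (a) is automatically in force, and I expect to obtain all four relations there as equalities.

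The second step is the symmetric orthogonality $\big\langle \calC_q^{\triangledown}, \gerI_q^{\,!} \big\rangle = \{0\}$, which yields at once both $\gerI_q^{\,!} \subseteq \big(\calC_q^{\triangledown}\big)^{\!\perp}$ and $\calC_q^{\triangledown} \subseteq \big(\gerI_q^{\,!}\big)^{\!\perp}$. By the description obtained in the proof of Proposition~\ref{triangledown}, $\calC_q^{\triangledown}$ is the unital $\Cqqm$--subalgebra of $F_q^\vee$ generated by $(q-1)^{-1}(\calC_q)^+$, with $(\calC_q)^+ = \calC_q \cap \Ker\,\epsilon$; so it suffices to see that each generator-product $\prod_{i=1}^k (q-1)^{-1} c_i$ (the $c_i \in (\calC_q)^+ \subseteq \Ker\,\epsilon$) annihilates an arbitrary $x \in \gerI_q^{\,!}$. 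Since $\langle\,\ ,\,\rangle$ is a Hopf pairing, $\big\langle c_1 \cdots c_k, x \big\rangle = \big\langle c_1 \otimes \cdots \otimes c_k,\, \Delta^k(x) \big\rangle$; expanding $\Delta^k = \sum_{\Psi \subseteq \{1,\dots,k\}} \delta_\Psi$ and using that $\langle c_i, 1\rangle = \epsilon(c_i) = 0$ kills every term of $\sum_\Psi \delta_\Psi(x)$ with a tensor factor equal to $1$, only the top term survives, so this pairing equals $\big\langle c_1 \otimes \cdots \otimes c_k,\, \delta_k(x) \big\rangle$. Now $\delta_k(x) \in (q-1)^k \sum_{s} U_q^{\otimes(s-1)} \otimes \gerI_q \otimes U_q^{\otimes(k-s)}$ by definition of $\gerI_q^{\,!}$, and in each summand the factor $c_i$ over $\gerI_q$ contributes a value in $\langle \calC_q, \gerI_q\rangle = \{0\}$ (hypothesis $\gerI_q = \calC_q^{\,\perp}$); the powers of $(q-1)$ balance, and the orthogonality follows.

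Third, I would establish the two reverse inclusions. For $\big(\calC_q^{\triangledown}\big)^{\!\perp} \subseteq \gerI_q^{\,!}$: take $x \in \big(\calC_q^{\triangledown}\big)^{\!\perp} \subseteq {U_q}'$; pairing with $1 \in \calC_q^{\triangledown}$ forces $\epsilon(x)=0$, and pairing with $(q-1)^{-1}(\calC_q)^+$ forces $\langle (\calC_q)^+, x\rangle = \{0\}$, hence $\langle \calC_q, x\rangle = \{0\}$, i.e.\ $x \in \calC_q^{\,\perp} = \gerI_q$; thus $x \in \gerI_q \cap {U_q}'$. Since $\gerI_q$ is strict, the memberships $\delta_n(x) \in \sum_s U_q^{\otimes(s-1)}\otimes\gerI_q\otimes U_q^{\otimes(n-s)}$ (from $\gerI_q \coideal U_q$) and $\delta_n(x) \in (q-1)^n U_q^{\otimes n}$ (from $x \in {U_q}'$) combine, by the same elementary intersection computation carried out in the proof of Proposition~\ref{esclamativo}(2) (commuting the intersection past the tensor slots, using torsion-freeness of $U_q$ and $\gerI_q \cap (q-1)U_q = (q-1)\gerI_q$), into $\delta_n(x) \in (q-1)^n \sum_s U_q^{\otimes(s-1)}\otimes\gerI_q\otimes U_q^{\otimes(n-s)}$; hence $x \in \gerI_q^{\,!}$ and $\gerI_q^{\,!} = \big(\calC_q^{\triangledown}\big)^{\!\perp}$. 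For $\calC_q^{\triangledown} = \big(\gerI_q^{\,!}\big)^{\!\perp}$: by what precedes $\big(\gerI_q^{\,!}\big)^{\!\perp} = \big(\big(\calC_q^{\triangledown}\big)^{\!\perp}\big)^{\!\perp} = \big(\calC_q^{\triangledown}\big)^{\!\perp\perp}$, and $\big(\calC_q^{\triangledown}\big)^{\!\perp\perp} = \calC_q^{\triangledown}$ because $\calC_q^{\triangledown}$ is $(q-1)$--saturated in $F_q^\vee$ --- which holds since $\calC_q$ is strict, by Proposition~\ref{triangledown}(2) --- and the perfect pairing $F_q^\vee \times {U_q}'$ makes saturated submodules bidual-closed. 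The last bullet of (a), on real quantum subgroups, costs nothing, since no step above uses the $\ast$--structures; and (b) runs along the same four steps.

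The step I expect to be the genuine obstacle is the $\Cqqm$--level bookkeeping hidden in the third paragraph: passing from $\big(\calC_q^{\triangledown}\big)^{\!\perp\perp}$ back to $\calC_q^{\triangledown}$, and from ``$\gerI_q \cap {U_q}'$'' to $\gerI_q^{\,!}$, both require controlling how intersections, orthogonals and tensor products interact \emph{over the ground ring $\Cqqm$} rather than over the field $\Cq$, i.e.\ handling possible torsion. This is exactly where the full strength of a \emph{double} quantization (perfectness of the pairing, not mere duality) and the strictness statements of Propositions~\ref{triangledown}(2) and~\ref{esclamativo}(2) are needed; the Hopf-pairing and $\delta$--calculus identities of the second paragraph, by contrast, are routine.
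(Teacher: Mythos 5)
Your second paragraph (the vanishing of $\big\langle \calC_q^{\!\triangledown},\,\gerI_q^{\,!}\big\rangle$ via $\Delta^k=\sum_\Psi\delta_\Psi$ and $\epsilon(c_i)=0$) reproduces the paper's computation and is fine, and your preliminary observation is also sound: since $\perp$ is the annihilator for the $\Cqqm$--valued pairing and $\Cqqm$ is a domain, the hypotheses $\calC_q=\gerI_q^{\,\perp}$, $\gerI_q=\calC_q^{\,\perp}$ do force $(q-1)$--saturation, hence strictness, of both objects; granted that, your route to $\big(\calC_q^{\!\triangledown}\big)^{\perp}\subseteq\gerI_q^{\,!}$ (pairing against $1$ and $(q-1)^{-1}(\calC_q)^+$ to land in $\calC_q^{\,\perp}\cap{U_q}'=\gerI_q\cap{U_q}'=\gerI_q^{\,!}$) is legitimate, though different from the paper's, which proves this inclusion by a direct computation with orthogonals of tensor products that needs no strictness at all. (Incidentally, the proposition has no ``real'' clause, so that remark of yours is vacuous.)

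The genuine gap is your last step. You deduce $\calC_q^{\!\triangledown}=\big(\gerI_q^{\,!}\big)^{\perp}$ from $\gerI_q^{\,!}=\big(\calC_q^{\!\triangledown}\big)^{\perp}$ by taking $\perp$ twice and invoking the principle that a perfect pairing makes $(q-1)$--saturated submodules bidual-closed. No such principle exists: non-degeneracy (or even the condition $H=K^\bullet$, $K=H^\bullet$) does not give $M^{\perp\perp}=M$ for submodules of infinite-rank modules --- reflexivity of this kind already fails over the field $\Cq$ for infinite-dimensional pairings, and saturation controls only $(q-1)$--torsion, not any kind of ``weak closure''. The paper's own classical discussion supplies a counterexample to the analogous claim at $q=1$: for the perfect pairing $F[G]\times U(\gerg)$ one has $\gerI(K)^{\perp\perp}=\calC(K)^{\perp}=\gerI\big(\widehat{K}\big)\supsetneq\gerI(K)$ whenever $K$ is not observable. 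So your argument only re-proves the inclusion $\calC_q^{\!\triangledown}\subseteq\big(\gerI_q^{\,!}\big)^{\perp}$, which you already had, and misses exactly the part of the statement that is hard. The paper closes it by a completely different mechanism: strictness lets one apply Theorem \ref{converse} to write $\gerI_q=\big(\gerI_q^{\,!}\big)^{\!\curlyvee}$, then the first (weak) half of claim \emph{(b)} of this very proposition, applied to $\calI_q:=\gerI_q^{\,!}$, gives $\big(\calI_q^{\curlyvee}\big)^{\perp}=\big(\calI_q^{\perp}\big)^{\!\Lsh}$, and Theorem \ref{converse} again collapses $\big(\big(\calI_q^{\perp}\big)^{\!\Lsh}\big)^{\!\triangledown}=\calI_q^{\perp}$, whence $\calC_q^{\!\triangledown}=\calI_q^{\perp}=\big(\gerI_q^{\,!}\big)^{\perp}$ --- no double annihilator is ever taken, and the two halves of the proposition must be combined in a non-circular order, as the paper is careful to note. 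To repair your proof you would either have to reproduce such a chain or prove independently that $\calC_q^{\!\triangledown}$ is itself an annihilator, which is precisely the content at stake; the same objection applies verbatim to your treatment of part \emph{(b)}.
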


\begin{proof}
 Both in claim  {\it (a)\/}  and in claim  {\it (b)\/}  the
orthogonality relations between  $ \gerC_q $  and  $ \calI_q $  and
between  $ \calC_q $  and  $ \gerI_q $  are considered w.r.t.~the
pairing between  $ F_q[G] $  and  $ U_q(\gerg) $,  and the subsequent
orthogonality relations are meant w.r.t.~the pairing between
$ {F_q[G]}^\vee $  and  $ {U_q(\gerg)}' $.  Indeed, by  Theorem \ref{GQDP},  $ \Big( {U_q(\gerg)}' \, , {F_q[G]}^\vee \Big) $  is a double quantization of  $ \big( G^*, \, \gerg^* \big)\,$.
 \vskip5pt
   {\it (a)}  First,  $ \, \epsilon(\gerI_q) = 0 \, $  because  $ \gerI_q $  is a coideal.  Then  $ \, x = \delta_1(x) \in (q-1) \, U_q \, $  for all  $ \, x \in {\gerI_q}^{\! !} \, $,  hence  $ \, {\gerI_q}^{\! !} \subseteq (q-1) \, U_q \, $.  Thus we have
  $$  \big\langle \calC_q, {\gerI_q}^{\! !\,} \big\rangle \subseteq (q-1) \, \Cqqm \;\; .  $$
   \indent   Now let  $ \, J = J_{\scriptscriptstyle F_q} \, $  be the ideal
of  $ F_q $, and take  $ \, c_i\in \calC_q \cap J$ ($i=1,\ldots,n$)\,;  then  $ \, \langle c_i, 1 \rangle = \epsilon(c_i) = 0 \, $  ($ i= 1, \dots, n$)\,.  Given  $ \, y \in {\gerI_q}^{\! !} \, $,  look at
  $$  \left\langle \, {\textstyle \prod\limits_{i=1}^n} c_i \, ,
\, y \right\rangle  = \left\langle \, \mathop{\otimes}\limits_{i=1}^n c_i \, , \Delta^n(y) \right\rangle
= \bigg\langle \, \mathop{\otimes}\limits_{i=1}^n c_i \, , {\textstyle \sum\limits_{\Psi \subseteq \{1,\dots,n\}}} \hskip-3pt \delta_\Psi(y) \bigg\rangle  = {\textstyle \sum\limits_{\Psi \subseteq \{1,\dots,n\}}} \! \bigg\langle \, \mathop{\otimes}\limits_{i=1}^n c_i \, , \delta_\Psi(y) \bigg\rangle  $$
   \indent   Consider the summands in the last term of the above formula.  Let  $ \, \vert\Psi\vert = t$  ($t \leq n$)\,,  then
\[
\Big\langle \mathop{\otimes}\limits_{i=1}^n c_i \, , \, \delta_\Psi(y) \Big\rangle  \; = \;  \Big\langle \mathop{\otimes}\limits_{i \in \Psi} c_i \, , \, \delta_t(y) \Big\rangle
\cdot {\textstyle \prod\limits_{j \not\in \Psi}} \big\langle\, c_j \, , 1 \,\big\rangle
\]
by definition of  $\delta_\Psi\,$.  Thanks to the previous analysis, we have  $ \, \prod_{j \not\in \Psi} \langle c_j \, , 1 \rangle = 0 \, $  unless
$ \, \Psi = \{1,\dots,n\} \, $,  and in the latter case
\[
\delta_\Psi(y) \, = \, \delta_n(y)  \; \in \;  {(q-1)}^n \, {\textstyle \sum\limits_{s=1}^n} \, {U_q}^{\!\otimes (s-1)} \! \otimes \gerI_q \otimes {U_q}^{\!\otimes (n-s)}  \quad .
\]
The outcome is
\[
  \left\langle\, \mathop{\otimes}\limits_{i=1}^n c_i \, ,
\, y \right\rangle  =  \bigg\langle \, \mathop{\otimes}\limits_{i=1}^n c_i \, , \, \delta_n(y) \bigg\rangle  \in
\left\langle \, \mathop{\otimes}\limits_{i=1}^n c_i \, , \,
{(q-1)}^n {\textstyle \sum\limits_{s=1}^n} \, {U_q}^{\otimes (s-1)}
\! \otimes \gerI_q \otimes {U_q}^{\otimes (n-s)} \right\rangle = 0
\]
because  $ \, y \in {\gerI_q}^{\! !}$  and  $\gerI_q = {\calC_q\phantom{)}}^{\!\!\!\perp} \, $  by assumption.  Thus $ \; \big\langle \! {(q\!-\!1)}^{-n} {\big( \calC_q \bigcap J \,\big)}^n , \, {\gerI_q}^{\! !} \,\big\rangle = 0 \, $,  for all  $ \, n \in \N_+ \, $.  In addition,  $ \, \big\langle 1 \, , \, {\gerI_q}^{\! !} \,\big\rangle = \epsilon\big({\gerI_q}^{\! !}\big) = 0 \, $.  The outcome is  $ \, \big\langle {\calC_q}^{\!\!\triangledown}, \, {\gerI_q}^{\! !} \,\big\rangle = 0 \, $,  whence  $ \, {\gerI_q}^{\! !} \subseteq {\big( {\calC_q}^{\!\!\triangledown} \big)}^\perp \, $ ,  and  $ \, {\calC_q}^{\!\!\triangledown} \subseteq {\big( {\gerI_q}^{\! ! \,} \big)}^\perp \, $.
                                                                          \par
   Now we prove also  $ \, {\big( {\calC_q}^{\!\!\triangledown} \big)}^\perp \subseteq {\gerI_q}^{\! !} \, $.  Notice that  $ \, {\calC_q}^{\!\!\triangledown} \supseteq \calC_q \, $,  whence  $ \, {\big( {\calC_q}^{\!\!\triangledown} \big)}^\perp \subseteq {\calC_q \phantom{)}}^{\!\!\!\!\perp} = \gerI_q \, $;  therefore  $ \, {\big( {\calC_q}^{\!\!\triangledown} \big)}^\perp \subseteq \gerI_q \, $.  Pick now  $ \, \eta \in {\big( {\calC_q}^{\!\!\triangledown} \big)}^\perp \, $  (inside  $ {U_q}'$).  Since  $ \, \eta \in {U_q}' \, $,  for all  $ \, n \in \N_+ \, $  we have  $ \, \delta_n(\eta) \in {(q-1)}^n {U_q}^{\otimes n} \, $,  and from  $ \, \eta \in {\big( {\calC_q}^{\!\!\triangledown} \big)}^\perp \, $  we get also that  $ \, \eta_+ := {(q-1)}^{-n} \delta_n(\eta) \, $  enjoys  $ \, \Big\langle {\big(\, \calC_q \bigcap J_{F_q} \,\big)}^{\otimes n} , \, \eta_+ \Big\rangle = 0 \, $   --- acting as before ---   so that
  \[
  \eta_+ \in {\Big( {\big(\, \calC_q \,{\textstyle \bigcap}\, J_{F_q} \,\big)}^{\otimes n} \Big)}^{\!\perp} \; = \, {\textstyle \sum\limits_{r+s=n-1}} {U_q}^{\otimes r} \otimes {\big(\, \calC_q
\,{\textstyle \bigcap}\, J_{F_q} \,\big)}^{\!\perp} \otimes {U_q}^{\otimes s}  \quad .
\]
\indent   Moreover  $ \, \delta_n(\eta) \in {J_{U_q}}^{\!\otimes n} \, $,  hence  $ \, \delta_n(\eta) \in \big( {(q \! - \! 1)}^n {U_q}^{\otimes n} \big) \bigcap {J_{U_q}}^{\!\otimes n} \! = \, {(q \! - \! 1)}^n {J_{U_q}}^{\!\otimes n} \, $,  so
  \[
\displaylines{
   \eta_+ \in {\Big( {\big(\, \calC_q \,{\textstyle \bigcap}\, J_{F_q} \big)}^{\otimes n} \Big)}^{\!\perp} \,{\textstyle \bigcap}\, {J_{U_q}}^{\!\otimes n}  =  \Big( {\textstyle \sum\limits_{r+s=n-1}} \, {U_q}^{\otimes r} \otimes {\big(\, \calC_q \,{\textstyle \bigcap}\, J_{F_q} \,\big)}^\perp \otimes {U_q}^{\otimes s} \Big) \,{\textstyle \bigcap}\, {J_{U_q}}^{\!\otimes n} \, =  \cr
   \hfill   = \, {\textstyle \sum_{r+s=n-1}} \, {J_{U_q}}^{\!\otimes r} \otimes \Big( \! {\big(\, \calC_q \,{\textstyle \bigcap}\, J_{F_q} \,\big)}^\perp \,{\textstyle \bigcap}\, J_{U_q} \Big) \otimes {J_{U_q}}^{\!\otimes s}  \quad .  \cr }
\]
Since  $ \, {\big(\, \calC_q \bigcap J_{F_q} \,\big)}^\perp \bigcap \, J_{U_q} = \, {\calC_q}^{\!\perp} \bigcap J_{U_q} = \, \gerI_q \bigcap J_{U_q} = \, \gerI_q \, $,  we have
\[
\eta_+ \, \in {\textstyle \sum\limits_{r+s=n-1}} \hskip-3pt {J_{U_q}}^{\!\otimes r} \! \otimes \gerI_q \otimes {J_{U_q}}^{\!\otimes s}
\]
whence
\[
\delta_n(\eta) \, \in \, {(q \! - \! 1)}^n {\textstyle \sum\limits_{r+s=n-1}} \hskip-3pt {U_q}^{\otimes r} \! \otimes \gerI_q \otimes {U_q}^{\otimes s}   \qquad  \forall \; n \in \N_+ \;\; .
\]
Being, in addition,  $ \, \eta \in \gerI_q \, $,  for we proved that
$ \, {\big( {\calC_q}^{\!\!\triangledown} \big)}^\perp \subseteq \gerI_q \, $,  we get  $ \, \eta \in {\gerI_q}^{\! !} \, $.   Therefore  $ \, {\big( {\calC_q}^{\!\!\triangledown} \big)}^\perp \subseteq {\gerI_q}^{\! !} \, $,  q.e.d.
                                        \par
   Finally, assume that  $\calC_q$ or $\gerI_q$ are strict quantizations.  Then we must still prove that  $ \, {\calC_q}^{\!\!\triangledown} = {\big( \gerI_q^{\;\,!\,} \big)}^\perp \, $.  Since  $ \, \calC_q = {\gerI_q}^{\!\perp} \, $  and  $ \, \gerI_q = {\calC_q}^{\!\perp} \, $,  it is easy to check that  $ \calC_q $  is strict if and only if  $\gerI_q $  is; therefore, we can assume that  $\gerI_q$ is strict.
                                        \par
   The assumptions and  Theorem \ref{converse} {\it (b)\/}  give  $ \, \gerI_q = \big( \gerI_q^{\;\,!\,} \big)^{\!\curlyvee} \, $;  moreover,  $ \, \calI_q := {\gerI_q}^{\! !} \, $  is strict.  Then we can apply the first part of claim  {\it (b)}   --- which is proved, later on, in a way independent of the present proof of claim  {\it (a)\/}  itself ---   and get  $ \, \big( {\calI_q}^{\!\curlyvee} \big)^\perp = \, \big( {\calI_q}^{\!\perp} \big)^{\!\Lsh} \, $.  Therefore
\begin{equation}\label{quarantanove}
 {\calC_q}^{\!\!\triangledown}  \, =  \; \big( {\gerI_q}^{\!\perp} \big)^{\!\triangledown}  \, = \;  \Big( \Big( \big( \gerI_q^{\;\,!\,} \big)^{\!\curlyvee} \Big)^{\!\perp\,} \Big)^{\!\triangledown}  \, = \;  \Big( \big( {\calI_q}^{\!\curlyvee} \big)^\perp \Big)^{\!\triangledown}  \, = \;  \Big( \big( {\calI_q}^{\!\perp} \big)^{\!\Lsh\,} \Big)^{\!\triangledown}  \quad .
\end{equation}
   \indent   Now, it is straightforward to prove that  $\calI_q$ strict implies that  ${\calI_q}^{\!\perp}$ is strict as well.  Then  Proposition \ref{converse}{\it (d)\/}  ensures  $ \, \Big(\! \big( {\calI_q}^{\!\perp} \big)^{\!\Lsh\,} \Big)^{\!\triangledown} \! = \, {\calI_q}^{\!\perp} \, $.  This along with (\ref{quarantanove}) yields  $ \, {\calC_q}^{\!\!\triangledown}  =  \Big( \big( {\calI_q}^{\!\perp} \big)^{\!\Lsh\,} \Big)^{\!\triangledown}  = \,  {\calI_q}^{\!\perp}  =  {\big( \gerI_q^{\;\,!\,} \big)}^\perp \, $,  ending the proof of  {\it (a)}.
 \vskip5pt
   {\it (b)}  With much the same arguments as for  {\it (a)},  we find as well that
  $$  \big\langle {\calI_q}^{\!\curlyvee}, \, {\gerC_q}^{\!\!\Lsh} \, \big\rangle  \; \in \;  \big\langle J^{\otimes (n-1)} \! \otimes \calI_q \, , \, {U_q}^{\otimes (n-1)} \! \otimes \gerC_q \big\rangle  \; \subseteq \;  \big\langle \calI_q \, , \gerC_q \big\rangle  \; = \;  0  $$
because  $ \, \calI_q = {\gerC_q}^{\!\perp} \, $;  this means that
  \begin{equation}\label{quarantadieci}
{\calI_q}^{\!\curlyvee} \subseteq {\big( {\gerC_q}^{\!\!\Lsh\,}
\big)}^\perp \quad ,  \qquad \qquad  {\gerC_q}^{\!\!\Lsh} \subseteq
{\big( {\calI_q}^{\!\curlyvee} \big)}^\perp \quad .
\end{equation}
   \indent   Let now  $ \, \kappa \in {\big( {\calI_q}^{\!\curlyvee} \big)}^\perp_q \; \big( \! \subseteq \! {U_q}' \, \big) \, $.  Since  $ \, \kappa \in {U_q}' \, $,  we have  $ \, \delta_n(\kappa) \in {(q-1)}^n {U_q}^{\otimes n} \, $  for all  $ \, n \in \N \, $;   moreover, from  $ \, \kappa \in {\big( {\calI_q}^{\!\curlyvee} \big)}^\perp \, $  it follows that  $ \, \kappa_+ := {(q-1)}^{-n} \delta_n(\kappa) \in {U_q}^{\otimes n} \, $  enjoys  $ \, \Big\langle J^{\otimes (n-1)} \otimes \calI_q \, , \, \kappa_+ \Big\rangle = 0 \, $,  so that
  \[
  \kappa_+ \in {\Big( J^{\otimes (n-1)} \otimes \calI_q \Big)}^{\!\perp} = \, {\textstyle \sum_{r+s=n-2}} \, {U_q}^{\otimes r} \otimes J^\perp \otimes {U_q}^{\otimes s} \otimes U_q \, + \,{U_q}^{\otimes (n-1)} \otimes {\calI_q \phantom{)}}^{\!\!\!\perp} \;\; .
\]
In addition,  $ \, \delta_n(\kappa) \in {J_{U_q}}^{\!\!\otimes n} \, $,  where  $ \, J_{U_q} := \Ker\,\big( \, \epsilon \! : U_q \longrightarrow \Cqqm \big) \, $;  therefore  $ \, \delta_n(\kappa) \in \big( {(q-1)}^n {U_q}^{\otimes n} \big) \bigcap {J_{U_q}}^{\!\!\otimes n} = {(q-1)}^n {J_{U_q}}^{\!\!\otimes n} \, $,  which together with the above formula yields
  \[
  \displaylines{
   \kappa_+ \, \in \, {\Big( J^{\otimes (n-1)} \otimes \calI_q \Big)}^{\!\perp} {\textstyle \bigcap} \; {J_{U_q}}^{\!\!\otimes n} \, =   \hfill  \cr
   = \, \bigg(\, {\textstyle \sum\limits_{r+s=n-2}} {U_q}^{\otimes r} \otimes J^\perp \otimes {U_q}^{\otimes s} \otimes U_q \bigg) \, {\textstyle \bigcap} \; {J_{U_q}}^{\!\!\otimes n} \, + \, \Big( {U_q}^{\otimes (n-1)} \otimes {\calI_q \phantom{(}}^{\!\!\!\perp} \Big) \, {\textstyle \bigcap} \; {J_{U_q}}^{\!\!\otimes n} \, =   \quad  \cr
   \qquad   = \, {\textstyle \sum\limits_{r+s=n-2}} {J_{U_q}}^{\!\!\otimes r} \otimes \Big( J^\perp \, {\textstyle \bigcap} \; J_{U_q} \Big) \otimes {J_{U_q}}^{\!\!\otimes s} \otimes J_{U_q} \, + \, {J_{U_q}}^{\!\!\otimes (n-1)} \otimes \Big( \calI_q \phantom{(}^{\!\!\!\perp} \, {\textstyle \bigcap} \; J_{U_q} \Big) \, =  \cr
   \hfill   = \, {J_{U_q}}^{\!\!\otimes (n-1)} \otimes \Big( \calI_q \phantom{(}^{\!\!\!\perp} \, {\textstyle \bigcap} \; J_{U_q} \Big) \, = \, {J_{U_q}}^{\!\!\otimes (n-1)} \otimes \Big( \gerC_q \; {\textstyle \bigcap} \; J_{U_q} \Big) \, \subseteq \, {U_q}^{\otimes (n-1)} \otimes \gerC_q
\cr }
\]
where in the third equality we used the fact that  $ \, J^\perp \bigcap J_{U_q} = \{0\} \, $.  So
$ \, \kappa_+ \in {U_q}^{\!\otimes (n-1)} \otimes \, \gerC_q \, $,  hence  $ \, \delta_n (\kappa) \in {(q-1)}^n {U_q}^{\otimes (n-1)} \otimes \gerC_q \, $  for all  $ \, n \in \N_+ \, $:  thus  $ \, \kappa \in {\gerC_q}^{\!\!\Lsh} \, $.  Therefore  $ \, {\big( {\calI_q}^{\!\curlyvee} \big)}^\perp \subseteq {\gerC_q}^{\!\!\Lsh} \, $, which together with the right-hand side inequality in (\ref{quarantadieci})  gives  $ \, {\gerC_q}^{\!\!\Lsh} = {\big( {\calI_q}^{\!\curlyvee} \big)}^\perp \, $.
                                        \par
   In the end, suppose also that  one between $\gerC_q$ and  $\calI_q$ is strict.  As  $ \, \calI_q = {\gerC_q}^{\!\perp} \, $  and  $ \, \gerC_q = {\calI_q}^{\!\perp} \, $,  one sees easily that  $ \calI_q $  is strict if and only if  $ \gerC_q $  is; then we can assume that  $\gerC_q$ is strict.  We want to show that  $ \, {\calI_q}^{\!\curlyvee} = {\big( {\gerC_q}^{\!\!\Lsh\,} \big)}^\perp \, $.
                                        \par
   The assumptions and Theorem \ref{converse}{\it (d)\/}  give  $ \, \gerC_q = \big( \gerC_q^{\,\Lsh\,} \big)^{\!\triangledown} \, $.  Moreover, we have  that $\calC_q$ is strict  by  Proposition \ref{triangledown}{\it (3)\/}  and  Proposition \ref{Lsh} {\it (3)}.  Then we can apply the first part of claim  {\it (a)},  thus getting  $ \, \big( {\calC_q}^{\!\!\triangledown} \big)^\perp = \, \big( {\calC_q}^{\!\perp} \big)^! \, $.  Therefore
\begin{equation}\label{quarantundici}
  {\calI_q}^{\!\curlyvee}  \, = \;  \big( {\gerC_q}^{\!\perp} \big)^{\!\curlyvee}  \, = \;  \Big( \Big(\! \big( \gerC_q^{\,\Lsh\,} \big)^{\!\triangledown} \Big)^{\!\perp\,} \Big)^{\!\!\curlyvee}  \, = \;  \Big(\! \big( {\calC_q}^{\!\!\triangledown} \big)^\perp \Big)^{\!\!\curlyvee}  \, = \;  \Big(\! \big( {\calC_q}^{\!\perp} \big)^{!\,} \Big)^{\!\!\curlyvee}
\end{equation}
   \indent   Now, one proves easily that  $\calC_q$ strict  implies  ${\calC_q}^{\!\perp}$ strict.  Then  Theorem \ref{converse}{\it (c)\/}  yields  $ \, \Big(\! \big( {\calC_q}^{\!\perp} \big)^{!\,} \Big)^{\!\!\curlyvee} \! = \, {\calC_q}^{\!\perp} \, $.  This and \eqref{quarantundici} give  $ \, {\calI_q}^{\!\curlyvee} \! = \Big(\! \big( {\calC_q}^{\!\perp} \big)^{!\,} \Big)^{\!\!\!\curlyvee} \!\! = {\calC_q}^{\!\perp} \! = \! {\big( {\gerC_q}^{\!\Lsh\,} \big)}^\perp \, $,  which eventually ends the proof of  {\it (b)}.
\end{proof}

\bigskip

\section{Examples}  \label{examples}

In this last section we will give some examples showing how our general constructions may be explicitly implemented. Some of the examples may look rather singular, but our aim here is mainly to draw the reader's attention on how even badly behaved cases can produce reasonable results. It has to be remarked that a wealth of new examples of coisotropic subgroups of Poisson groups have been recently produced (\cite{Zam}), to which our recipes could be interestedly applied.

\medskip

  {\sl N.B.: for the last two examples   --- Subsections  \ref{param-fam_cs}  and  \ref{non-coiso}  ---   one can perform the explicit computations (that we just sketch) using definitions, formulas and notations as in  \cite{CiGa}, \S 6,  and in  \cite{Ga2}, \S 7}.

\medskip

\subsection{Quantization of Stokes matrices as a  $ \text{\it GL}_{\,n}^{\;*} $--space}  \label{Stokes}

 As a first example, we mention the following.  A well-known structure of Poisson group, typically known as the  {\sl standard\/}  one, is defined on  $ \text{\it SL}_{\,n} \, $;  then one can consider its (connected) dual Poisson group  $ \text{\it SL}_{\,n}^{\;*} \, $,  which in turn is a  Poisson group as well.  The set of  {\sl Stokes matrices}   --- i.e.~upper triangular, unipotent matrices ---   of size  $ n $  bears a natural structure of Poisson homogeneous space, and even Poisson quotient, for  $ \text{\it SL}_{\,n}^{\;*} \, $.  In  \cite{CiGa},  Section 6, it was shown that one can find an explicit quantization, of formal type, of this Poisson quotient by a suitable application of the QDP procedure for formal quantizations developed in that paper.
                                                                           \par
    Now, let us look at the explicit presentation of the formal quantization  $ U_\hbar(\mathfrak{sl}_n) $  considered in  [{\it loc.~cit.}].  One sees easily that this  can be turned into a presentation of a  {\sl global\/} quantization (of  $ \mathfrak{sl}_n $  again), i.e.~a QUEA  $ U_q(\mathfrak{sl}_n) $  in the sense of  Section \ref{quantizations}.  Similarly, Drinfeld's QDP (for quantum groups) applied to  $ U_\hbar(\mathfrak{sl}_n) $  provides a formal quantization  $ \, F_\hbar[[\text{\it SL}_{\,n}^{\;*}]] := {U_\hbar(\mathfrak{sl}_n)}' \, $  of the function algebra over the formal group  $ \text{\it SL}_{\,n}^{\;*} \, $;  but then the analogous functor for the global version of QDP yields  (cf.~Theorem \ref{GQDP})  a global quantization  $ \, F_q[\text{\it SL}_{\,n}^{\;*}] := {U_q(\mathfrak{sl}_n)}' \, $  of the function algebra over  $ \text{\it SL}_{\,n}^{\;*} \, $.  In a nutshell,  $ F_q[\text{\it SL}_{\,n}^{\;*}] $  is nothing but (a suitable renormalization of) an obvious  $ \Cqqm $--integral  form of  $ F_\hbar[[\text{\it SL}_{\,n}^{\;*}]] \, $.
                                                                           \par
    Carrying further on this comparison, one can easily see that the whole analysis performed in  \cite{CiGa}  can be converted into a similar analysis for the global context, yielding parallel results; in particular,  {\it one ends up with a global quantization   --- of type  $ \calC $,  in the sense of  Section \ref{quantizations}  ---   of the space of Stokes matrices}.  More in detail, this quantization is a strict one, as such is the quantum subobject one starts with.
                                                                           \par
    Since all this does not require more than a word by word translation, we refrain from filling in details.

\medskip

\subsection{A parametrized family of real coisotropic subgroups}  \label{param-fam_cs}

Coisotropic subgroups may come in families, in some cases inside the same conjugacy class (which is responsible for different Poisson homogeneous bivectors on the same underlying manifold). An example in the real case was described in detail in \cite{BCGST}. The setting is the one of standard Poisson $SL_2(\mathbb R)\,$, which contains a two parameter family of $1$--\,dimensional coisotropic subgroups described, globally, by the right ideal and two-sided ideal
\begin{equation}
\label{coiso-generators}
\calI_{\mu,\nu}  \; := \;  \left\{\, a-d+2\,q^{\frac 1 2}\mu b \; , \; q\,\nu b+c
\,\right\} \cdot F_q\big[SL_2({\mathbb R})\big]
\end{equation}
where $a,b,c,d$ are the usual matrix elements generating $\,F_q\big[SL_2(\mathbb R)\big]\,$, with $*$-- structure in which they are all real (thus $\,q^*=q^{-1}\,$) and $\,\mu,\nu\in {\mathbb R} \, $.  The corresponding family of coisotropic subgroups of classical  $ SL_2({\mathbb R}) $  may be described as
\[
  K_{\mu,\nu}  \; := \;  \bigg\{\! \left(\begin{array}{cc}d-2\mu b & b\\ -\nu b&d\end{array}\right) \,\bigg|\, b,d \in {\mathbb R} \, , \, d^2+\nu b^2=1 \,\bigg\}
\]
(adapting our main text arguments to the case of  {\sl right\/}  quantum coisotropic subgroups, this is quite trivial and we will do it without further comments).  The corresponding $SL_2({\mathbb R})$--quantum  homogeneous spaces have local description given as follows: $\calC_{\mu,\nu}$ is the subalgebra generated by
\begin{equation}  \label{mu-homo}
  \begin{aligned}
     &  z_1 \, = \, q^{-\frac 1 2}(ac+\nu bd)+2\mu bc \; ,  \quad  z_2 \, = \, c^2+\nu d^2+2\mu q^{-\frac 1 2}cd \; ,  \\
     &  z_3 \, = \, a^2+\nu b^2+2\mu q^{-\frac 1 2}ab \; .
  \end{aligned}
\end{equation}

Using commutation relations   --- see (12) in \cite{BCGT}  ---   it is easily seen that $\calC_{\mu,\nu}$ has a linear basis given by  $ \; \big\{\, z_1^p z_2^q \, , \, z_1^p z_3^r \,\big|\, p, q, r \in \N \,\big\} \; $.

\medskip

\begin{proposition}
The subalgebra  $\calC_{\mu,\nu}$ is a right coideal of  $ F_q\big[SL_2({\mathbb R})\big] $  and is a strict quantization   --- of type  $ \calC $  ---   of  $ K_{\mu,\nu} \, $.
\end{proposition}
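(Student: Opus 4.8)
The plan is to verify, one at a time, the three conditions that make $\calC_{\mu,\nu}$ a strict quantization of type $\calC$ of the coisotropic subgroup $K_{\mu,\nu}$, in the right-handed version of Definition~\ref{def_strict-quant} (the adaptation of the main-text definitions to right coisotropic subgroups, which the paper has already declared to be routine): namely (i) $\calC_{\mu,\nu}$ is a unital subalgebra and a right coideal of $F_q\big[SL_2({\mathbb R})\big]$; (ii) $\pi_{F_q}(\calC_{\mu,\nu}) = \calC(K_{\mu,\nu})$; (iii) the flatness identity $\calC_{\mu,\nu}\cap(q-1)\,F_q\big[SL_2({\mathbb R})\big] = (q-1)\,\calC_{\mu,\nu}$; and, as we are in the real framework, also $\calC_{\mu,\nu}^\star = \calC_{\mu,\nu}$ in the sense of Definition~\ref{realq}.

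For (i), since $\calC_{\mu,\nu}$ is by definition the $\Cqqm$-subalgebra generated by $z_1,z_2,z_3$, the subalgebra part is free, and the coideal property reduces to checking that each $\Delta(z_i)$ lies in the correct one-sided tensor product for $i=1,2,3$. This is a finite computation with the standard coproduct on the $SL_2$-matrix generators; expanding $\Delta(z_i)$ and using the $q$-commutation relations of $a,b,c,d$ one collects the relevant tensor legs and recognizes each of them as a $\Cqqm$-linear combination of $z_1,z_2,z_3$ and $1$. I would organize this by first doing the bookkeeping at $q=1$ --- where $\bar z_i := z_i\bmod(q-1)$ are exactly the generators of $\calC(K_{\mu,\nu})$, so the classical coideal property is transparent --- and then tracking the powers of $q^{\pm 1/2}$ that occur, which affect coefficients only, not the shape of the answer.

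For (ii), the image $\pi_{F_q}(\calC_{\mu,\nu})$ is the $\C$-subalgebra of $F\big[SL_2(\mathbb R)\big]$ generated by $\bar z_1,\bar z_2,\bar z_3$. One inclusion is immediate: a direct substitution check shows each $\bar z_i$ is $K_{\mu,\nu}$-invariant (most cleanly by writing $\bar z_2,\bar z_3$ as the values of a fixed $(\mu,\nu)$-dependent quadratic form on the rows of the matrix and $\bar z_1$ as the associated bilinear term, and noting that $K_{\mu,\nu}$ preserves that form), so $\pi_{F_q}(\calC_{\mu,\nu})\subseteq\calC(K_{\mu,\nu})$; for the reverse inclusion I would invoke the explicit classical description of $SL_2(\mathbb R)\big/K_{\mu,\nu}$ and of its algebra of invariant functions from \cite{BCGST,BCGT}, which exhibits $\big\{\bar z_1^{\,p}\bar z_2^{\,r},\ \bar z_1^{\,p}\bar z_3^{\,s}\ \big|\ p,r,s\in\N\big\}$ as a $\C$-basis of $\calC(K_{\mu,\nu})$. (In particular $K_{\mu,\nu}$ is observable, so there is no discrepancy with an observable hull.)

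The genuinely substantial point is (iii), and it is where I expect the main obstacle. The input is the stated fact --- to be proved via the commutation relations (12) of \cite{BCGT} --- that $\big\{\,z_1^{\,p}z_2^{\,r},\ z_1^{\,p}z_3^{\,s}\ \big|\ p,r,s\in\N\,\big\}$ is a $\Cqqm$-basis of $\calC_{\mu,\nu}$; establishing this requires showing that the $q$-relations among $z_1,z_2,z_3$ allow one to rewrite every product in the ordered ``PBW-like'' form with $z_1$ on the left and no lower-order correction terms surviving, and that these relations degenerate to the classical commutative ones at $q=1$. Granting it, $\pi_{F_q}$ sends this $\Cqqm$-basis bijectively onto the $\C$-basis $\big\{\bar z_1^{\,p}\bar z_2^{\,r},\ \bar z_1^{\,p}\bar z_3^{\,s}\big\}$ of $\calC(K_{\mu,\nu})$ from (ii), hence in particular onto a $\C$-linearly independent subset of $F\big[SL_2(\mathbb R)\big]$. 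Flatness then follows by the usual argument: if $\xi=\sum_m\lambda_m(q)\,m\in\calC_{\mu,\nu}$ (sum over the basis monomials $m$, with $\lambda_m(q)\in\Cqqm$) lies in $(q-1)\,F_q\big[SL_2(\mathbb R)\big]$, then $\pi_{F_q}(\xi)=\sum_m\lambda_m(1)\,\pi_{F_q}(m)=0$ forces $\lambda_m(1)=0$ for every $m$, hence $(q-1)\mid\lambda_m(q)$ in the principal ideal domain $\Cqqm$ (as $(q-1)$ is prime), i.e. $\xi\in(q-1)\,\calC_{\mu,\nu}$; the reverse inclusion is trivial. Finally, $\calC_{\mu,\nu}^\star=\calC_{\mu,\nu}$ is a short computation using $q^\star=q^{-1}$ and the reality of $a,b,c,d$, which shows each $z_i^\star$ is again a $\Cqqm$-combination of $z_1,z_2,z_3$; this makes the quantization real and completes the proof.
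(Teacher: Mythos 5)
Your proposal is correct, and on the decisive point --- the flatness identity --- it has the same skeleton as the paper's proof: both rest on the monomials $\big\{ z_1^{\,p}z_2^{\,r},\, z_1^{\,p}z_3^{\,s} \big\}$ spanning $\calC_{\mu,\nu}$ over $\Cqqm$, plus the $\C$-linear independence of their images at $q=1$, which immediately gives $\calC_{\mu,\nu}\cap(q-1)\,F_q\big[SL_2(\mathbb R)\big]=(q-1)\,\calC_{\mu,\nu}$. Where you differ is in how that independence is obtained. The paper proves it entirely inside the quantum algebra by a leading-monomial argument: expanding $z_1^{\,p}z_2^{\,r}$ (resp.\ $z_1^{\,p}z_3^{\,s}$) in the standard basis $\big\{a^ib^jc^k,\,b^jc^kd^l\big\}$ of $F_q\big[SL_2(\mathbb R)\big]$, the monomial $a^pc^{\,p+2r}$ (resp.\ $a^{p+2s}c^{\,p}$) appears with coefficient a power of $q$ and cannot be produced by any other term (terms containing $b$ or $d$ never contribute to it), so a $\Cqqm$-combination of these monomials lies in $(q-1)F_q\big[SL_2(\mathbb R)\big]$ only if all its coefficients are in $(q-1)\Cqqm$. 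You instead import the independence from the classical side, by citing $\big\{\bar z_1^{\,p}\bar z_2^{\,r},\,\bar z_1^{\,p}\bar z_3^{\,s}\big\}$ as a $\C$-basis of $\calC(K_{\mu,\nu})$; this is legitimate, and it has the merit of making explicit the condition $\pi_{F_q}(\calC_{\mu,\nu})=\calC(K_{\mu,\nu})$ (together with observability of $K_{\mu,\nu}$ and the reality check), which the paper leaves to the references. The trade-off is that the whole weight of your strictness argument then rests on a classical fact you only cite: if it is not stated in that form in the literature you must supply it, e.g.\ by the classical ($q=1$) version of the same leading-monomial computation, or via the quadric relation expressing $\bar z_2\bar z_3$ as a polynomial in $\bar z_1$. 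Conversely, the paper simply cites the coideal property that you propose to recompute. So: same overall strategy, with the quantum leading-term computation (paper) versus the classical basis identification (you) as the source of the key linear independence.
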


\begin{proof}
 The first statement is proven in \cite{BCGT}. As for the second we will first show that $ \, z_1^p z_2^q\,, z_1^p z_3^r \not\in (q-1) F_q\big[SL_2({\mathbb R})\big] \, $  for any  $ \, p,q,r\in\N \, $. This may done by considering their expression in terms of the usual basis  $ \, \big\{\, a^p b^r c^s \, , \, b^h c^k d^i \,\big\} \, $  of $F_q\big[SL_2({\mathbb R})\big]\,$. In fact we do not need a full expression of monomials  $ z_1^p z_2^r $  or  $ z_1^p z_3^r $  in terms of this basis, which would lead to quite heavy computations. It is enough to remark that, for example, since
\[
z_1^p z_2^r  \; = \;  \left(q^{-\frac 1 2}ac+b(\nu d+2\mu c)\right)^p \left(c^2+(\nu d+2\mu q^{-\frac 1 2}c)d\right)^r
\]
we can get an element multiple of  $ \, a^p c^{p+2r} \, $  only from  $ \, (ac)\cdot\cdots (ac)\cdot c\cdots \cdot c \, $,  which is of the form  $ \, q^h a^p c^{p+2r} \not\in F_q\big[SL_2({\mathbb R})\big] \, $. Since no other elements may add up with this one, we have  $ \, z_1^p z_2^r \not\in (q-1) F_q\big[SL_2({\mathbb R})\big] \, $.  A similar argument works for  $ z_1^p z_3^r \, $.
                                                                        \par
   In a similar way we prove that any  $ \Cqqm $--linear  combination of the  $ z_1^p z_2^q $'s  and the  $ z_1^s z_3^r $'s  is in  $ \, (q-1) F_q\big[SL_2({\mathbb R})\big] \, $  if and only if all coefficients are in  $ \, (q-1) \Cqqm \, $.  Therefore  $ \calC_q $  is strict, q.e.d.
\end{proof}

\smallskip

   It makes therefore sense to compute  $ \, \calC_{\mu, \nu}^{\,\,\triangledown} \, $;  to this end, we can resume a detailed description of  $ \, U_q(\gersl_2^{\,*}) := {F_q\big[SL_2({\mathbb R})\big]}^\vee \, $   --- apart for the real structure, which is not really relevant here ---   from  \cite{Ga2},  \S 7.7.  From our PBW-type basis we have that  $ \calC_{\mu,\nu}^{\,\,\triangledown} $  is the subalgebra of  $ {F_q\big[SL_2({\mathbb R})\big]}^\vee $  generated by the elements  $ \, \zeta_i := {\frac{1}{\,q-1\,}} \big( z_i - \varepsilon(z_i) \big) \in {F_q\big[SL_2(\mathbb R)\big]}^\vee \, (i=1,2,3) \, $.  Since we know that
  $$  H_+ \, := \, \frac{\,a-1\,}{\,q-1\,}  \; ,  \qquad  E \, := \, \frac{\,b\,}{\,q-1\,}  \; ,  \qquad
  F  \, := \,  \frac{\,c\,}{\,q-1\,} \; ,  \qquad  H_-  \, := \,  \frac{\,d-1\,}{\,q-1\,}  $$
are algebra generators of  $ \, U_q(\gersl_2^{\,*}) := {F_q\big[SL_2({\mathbb R})\big]}^\vee \, $,  we deduce that
\begin{equation}\label{dualmunu}
  \begin{aligned}
\frac{\,\zeta_1\,}{\,q-1\,}  \,\; =  &  \;\;\, q^{-\frac 1 2}(F+\nu E) + (q-1) \left(q^{-\frac 1 2} H_+F+q^{-\frac 1 2} \nu E H_-+2\mu E F \right)  \\
\frac{\,\zeta_2-\nu\,}{\,q-1\,}  \,\; =  &  \;\;\, 2 \, (\nu H_- +\mu q^{-\frac 1 2}F) + (q-1) \left( F^2+\nu H_-^2+2\mu q^{-\frac 1 2} F H_- \right)  \\
\frac{\,\zeta_3-1\,}{\,q-1\,}  \,\; =  &  \;\;\, 2 \, (H_++\mu q^{-\frac 1 2}E) + (q-1) \left( H_+^2+\nu E^2+2\mu q^{-\frac 1 2} H_+ E \right)
  \end{aligned}
\end{equation}
In the semiclassical specialization  $ \; U_q(\gersl_2^{\,*}) \,{\buildrel {q \longrightarrow 1} \over {\, \relbar\joinrel\longrightarrow\,}}\, U_q(\gersl_2^{\,*}) \Big/ (q-1) U_q(\gersl_2^{\,*}) \; $  one has that  $ \, E \mapsto \text{\rm e} \, $,  $ \, F \mapsto \text{\rm f} \, $,  $ \, H_\pm \mapsto \pm \text{\rm h} \, $,  where  $ \, \text{\rm h}, \text{\rm e}, \text{\rm f} \, $  are Lie algebra generators of  $ \gersl_2^* \, $;  therefore the semiclassical limit of the right hand side of \eqref{dualmunu} is the Lie subalgebra generated by  $ \, \text{\rm f} + \nu \, \text{\rm e} \, $,  $ \, -\nu \, \text{\rm h} + \mu \, \text{\rm e} \, $,  $ \, \text{\rm h} + \mu \, \text{\rm e} \, $,  or, equivalently, the 2--dimensional Lie subalgebra generated by  $ \, \text{\rm f} + \nu \, \text{\rm e} \, $  and  $ \, \text{\rm h} + \mu \, \text{\rm e} \, $  (the three elements above being linearly dependent) with relation  $ \, [\, \text{\rm h} + \mu \, \text{\rm e} \, , \text{\rm f} + \nu \, \text{\rm e} \,] = \text{\rm f} + \nu \, \text{\rm e} \; $.
The quantization of this coisotropic subalgebra of $\gersl_2^{\,*}$ is therefore the subalgebra generated inside  $ U_q(\gersl_2^{\,*}) $  by the quadratic elements \eqref{dualmunu}.
                                                                          \par
   Similar computations can be performed starting from $\calI_{\mu,\nu}\,$. The transformed  $\calI_{\mu,\nu}^\curlyvee$ is the right ideal generated by the image of  $ \, a-d+2\,q^{\frac 1 2}\mu b \, $  and  $ \, q\nu b+c \, $,  i.e.~the right ideal generated by  $ \, H_+-H_- + 2 \, q^{\frac 1 2} \mu E \, $  and  $ \, q \, \nu E + F \, $;  also, from its semiclassical limit it is easily seen that this again corresponds to the same coisotropic subgroup of the dual Poisson group  $ {\text{\it SL}_{\,2}(\mathbb R)}^* \, $.
                                                                          \par
   All this gives a local   --- i.e., infinitesimal ---   description of the ($2$--dimensional) coisotropic subgroups  $ K^{\,\,\bot}_{\mu,\nu} $  in  $ {\text{\it SL}_{\,2}(\mathbb R)}^* \, $.

\medskip

\subsection{The non coisotropic case}  \label{non-coiso}

Let us finally consider the case of a non coisotropic subgroup. We will consider the embedding of $SL_2(\C)$ into $SL_3(\C)$ corresponding to a non simple root, which easily generalizes to higher dimensions. Computations will only be sketched.

Let $\gerh$ be the subalgebra of $\gersl_3(\C)$ spanned by $E_{1,3}\,$, $F_{1,3}\,$,  $ \, H_{1,3}=H_1+H_2 \, $. Easy computations show that the standard cobracket values are
\begin{equation}\label{cobracket non simple}
  \begin{aligned}
     \delta(E_{13})  \; =  &  \;\; E_{13}\wedge (H_1+H_2)+2E_{23}\wedge E_{12}\\
     \delta(F_{13})  \; =  &  \;\; F_{13}\wedge (H_1+H_2)-2F_{23}\wedge F_{12}\\
     \delta(H_1+H_2)  \; =  &  \;\; 0
  \end{aligned}
\end{equation}
and, therefore, the corresponding embedding  $ \; SL_2(\C) \lhook\joinrel\longrightarrow SL_3(\C) \; $  is  {\sl not\/}  coisotropic. To compute the coisotropic interior $\gerho$ of $\gerh\,$, consider that  $ \, \langle H_1 + H_2 \rangle \, $  is, trivially, a subbialgebra of $\gerh\,$, thus contained in $\gerho\,$.  Let  $ \, X := (H_1+H_2)+\alpha E_{13}+\beta F_{13} \; $:
 then
\[
  \delta(X)  \; = \;  X\wedge(H_1+H_2)+2\left(\alpha E_{23}\wedge E_{12}-\beta F_{23}\wedge F_{12}\right)
\]
shows that no such $X$ is in $\gerho\,$, unless $\,\alpha=0=\beta\,$.
 The outcome is that we have
  $$  \stackrel{\circ}{H} \,\; = \; \left(\begin{array}{ccc}
          \gamma  &  0  &     0    \\
             0    &  1  &     0    \\
             0    &  0  &  \gamma^{-1}
                                          \end{array}\right)
\; \subseteq \; SL_3(\C)  $$
with  $ \, \gamma \in \C^* \, $.  Correspondingly
\[
\gerh^{\langle\bot\rangle} \, = \, \Big(\gerho\Big)^\bot \, = \; \big\langle\, \text{\rm e}_{1,2} \, , \, \text{\rm e}_{1,3} \, , \, \text{\rm e}_{2,3} \, , \, \text{\rm f}_{1,2} \, , \, \text{\rm f}_{1,3} \, , \, \text{\rm f}_{2,3} \, , \, \text{\rm h}_{2.2} \,\big\rangle  \qquad \Big(\, \subseteq \, {\mathfrak{sl}_3(\C)}^* \,\Big)
\]
and, thus  $ \, {SL_3(\C)}^* \Big/ \! H^{\langle\bot\rangle} \, $  is a $1$--\,dimensional Poisson homogeneous space   --- with, of course, zero Poisson bracket.
                                                               \par
   Let us consider now any weak quantization $\gerC_q(H)$ of $H$. It should certainly contain the subalgebra of $U_q(\gersl_3)$ generated by the root vectors $E_{1,3}\,$, $F_{1,3}\,$, together with  $ \, K_1 K_3^{-1} \, $  and  $ \, \widehat{H}_{1,3} := \big( K_1 K_3^{-1} - 1 \big) \Big/ (q-1) \; $.  The equality
\[
\Delta(E_{1,3})  \; = \;  E_{1,3} \otimes K_1 K_3^{-1} + 1 \otimes E_{1,3} + (q-1) E_{1,2} \otimes E_{2,3}
\]
tells us that, in order to be a left coideal, such a quantization should also contain either  $ \, (q-1) E_{1,2} \, $  or  $ \, (q-1) E_{2,3} \, $  (and thus, as expected, it cannot be strict). Let us try to compute some elements in ${\gerC_q(H)}^{\!\Lsh}\,$.  Certainly, since
\[
  \delta_2\big(\widehat{H}_{1,3}\big)  \; = \;  \widehat{H}_{1,3} \otimes \big( K_1 K_3^{-1} - 1 \big)  \; = \;  (q-1) \, \widehat{H}_{1,3} \otimes \widehat{H}_{1,3}
\]
we can conclude that  $ \, (q-1) \widehat{H}_{1,3} \in {\gerC_q(H)}^{\!\Lsh} \, $.  On the other hand,
\[
  \delta_2(E_{1,3})  \; = \;  (q-1) E_{1,3} \otimes \widehat{H}_{1,3} + (q-1) E_{1,2} \otimes E_{2,3}
\]
implies that  $ \, (q-1) E_{1,3} \not\in {\gerC_q(H)}^{\!\Lsh} \, $,  while  $ \, (q-1)^2 E_{1,3} \in {\gerC_q(H)}^{\!\Lsh} \, $.

\vskip11pt

   All this means the following.

\vskip7pt

   Within  $ {\gerC_q(H)}^{\!\Lsh} $  we find a non-diagonal matrix element of the form  $ \, (q\!-\!1) \, t_{1,3} \, $:  it belong to  $ \, (q\!-\!1) {U_q(\gersl_3)}' \, $  but not to  $ \, (q\!-\!1) {\gerC_q(H)}^{\!\Lsh} $,  so that
  $$  {\gerC_q(H)}^{\!\Lsh} \,{\textstyle \bigcap}\, (q\!-\!1) \, {U_q(\gersl_3)}' \,\; \supsetneqq \;\, (q\!-\!1) \, {\gerC_q(H)}^{\!\Lsh}  $$
which means that the quantization  $ {\gerC_q(H)}^{\!\Lsh} $  {\it is not strict}.  On the other hand, we know by  Proposition \ref{Lsh}{\it (3)\/}  that  $ {\gerC_q(H)}^{\!\Lsh} $  is  {\sl proper}.  Therefore,  {\it we have an example of a quantization (of type  $ \calC_q \, $,  still by  Proposition \ref{Lsh}{\it (3)})  which is  {\sl proper},  yet it is  {\sl not strict}}.
 \vskip9pt
   In addition, in the specialization map  $ \, \pi : {U_q(\gersl_3)}' \relbar\joinrel\relbar\joinrel\twoheadrightarrow\, {U_q(\gersl_3)}' \Big/ (q-1) {U_q(\gersl_3)}' \, $  the element  $ \, (q\!-\!1) \, t_{1,3} \, $  is mapped to zero, i.e.~it yields a trivial contribution to the semiclassical limit of  $ {\gerC_q(H)}^{\!\Lsh} $   --- which here is meant as being  $ \; \pi\big( {\gerC_q(H)}^{\!\Lsh} \,\big) = {\gerC_q(H)}^{\!\Lsh} \! \Big/ {\gerC_q(H)}^{\!\Lsh} \bigcap \, (q-1) \, {U_q(\gersl_3)}' \; $.  With similar computations it is possible to prove, in fact, that the only generating element in  $ {\gerC(H)}^{\!\Lsh}$  having a non-trivial semiclassical limit is  $ \, (q-1) \widehat{H}_{1,3} \, $.  Therefore, through specialization at  $ \, q = 1 \, $,  from  ${\gerC(H)}^{\!\Lsh}$  one gets only  $ \, \pi\big({\gerC_q(H)}^{\!\Lsh}\big) = \C\big[t_{2,2}\big] \, $:  indeed, this in turn tells us exactly that  $ {\gerC_q(H)}^{\!\Lsh} $  is a quantization, of  {\sl proper\/}  type, of the homogeneous  $ {SL_3(\C)}^* $--space $ \, {SL_3(\C)}^* \!\Big/ H^{\langle\bot\rangle} \, $  (whose Poisson bracket is trivial).

\vskip23pt

   {\bf Remark.}  It is worth stressing that this example   --- no matter how rephrased ---   could not be developed in the language of formal quantizations as a direct application of the construction in  \cite{CiGa},  for only strict quantizations were taken into account there.

\bigskip
 \bigskip

\end{document}